\documentclass[colorlinks]{article}
\pdfoutput=1
\usepackage{graphicx}

\usepackage{amsmath}
\usepackage{amsfonts}
\usepackage{amssymb}
\usepackage{amsthm}
\usepackage{mathabx}

\usepackage{stmaryrd} 
\usepackage{MnSymbol} 

\usepackage[all]{xy}
\usepackage{color}
\usepackage[protrusion=true,expansion=true]{microtype}

\usepackage[
            colorlinks=true,
            linkcolor=blue]{hyperref}

\newtheorem{thm}{Theorem}[section]
\newtheorem{cor}[thm]{Corollary}
\newtheorem{lem}[thm]{Lemma}
\newtheorem{prop}[thm]{Proposition}
\newtheorem{obs}[thm]{Observation}
\newtheorem{problem}[thm]{Problem}
\theoremstyle{definition}
\newtheorem{defn}[thm]{Definition}
\theoremstyle{remark}

\newtheorem{exm}[thm]{Example}

\numberwithin{equation}{section}
\vfuzz2pt 
\hfuzz2pt 



\newcommand\ZZ{\mathbb{Z}}

\newcommand\QQ{\mathbb{Q}}

\newcommand{\im}{\operatorname{Im}}

\newcommand{\ab}{\operatorname{^{ab}}}
\newcommand{\fin}{\operatorname{\!_{f.i.}}}
\newcommand{\fg}{\operatorname{\!_{f.g.}}}
\newcommand{\rank}{\operatorname{rk}}
\newcommand{\End}{\operatorname{End}}

\newcommand{\Aut}{\operatorname{Aut}}

\newcommand{\GL}{\operatorname{GL}}
\newcommand{\Fix}{\operatorname{Fix}}

\newcommand{\mcd}{\operatorname{gcd}}

\newcommand{\MP}{\operatorname{MP}}      
\newcommand{\SIP}{\operatorname{SIP}}    
\newcommand{\CIP}{\operatorname{CIP}}    
\newcommand{\FIP}{\operatorname{FIP}}    
\newcommand{\FPP}{\operatorname{FPP}}    
\newcommand{\WhP}{\operatorname{WhP}}    

\newcommand{\eq}{\ \Leftrightarrow \ }           
\newcommand{\normaleq}{ \trianglelefteqslant } 
\newcommand{\isom}{\simeq}     
\newcommand{\nisom}{\nsimeq}   
\newcommand{\tr}{^{\!\top}}    
\newcommand{\cab}{\mathcal{C}} 

\newcommand{\ti}{\mbox{type\,(I)}}
\newcommand{\tii}{\mbox{type\,(II)}}


\parskip=0.1cm

\title{Algorithmic problems for\\free-abelian times free groups}%


\author{
\textbf{Jordi Delgado}\hyperref[authors]{$^{*}$}
 \&
\textbf{Enric Ventura}\hyperref[authors]{$^{\dag}$}
}

\date{\today}

\begin{document}

\label{top}
\maketitle

\noindent \textbf{Keywords}: free group, free-abelian group, decision problem,
automorphism.

\noindent \textbf{MSC}: \texttt{20E05}, \texttt{20K01}.

\begin{abstract}
We study direct products of free-abelian and free groups with special emphasis on
algorithmic problems. After giving natural extensions of standard notions into that
family, we find an explicit expression for an arbitrary endomorphism of $\ZZ^m \times
F_n$. These tools are used to solve several algorithmic and decision problems for $\ZZ^m
\times F_n $: the membership problem, the isomorphism problem, the finite index problem,
the subgroup and coset intersection problems, the fixed point problem, and the Whitehead
problem.
\end{abstract}

\goodbreak

\tableofcontents

\goodbreak

\section*{Introduction}
\addcontentsline{toc}{section}{Introduction}
Free-abelian groups, namely $\ZZ^m$, are classical and very well known. Free groups, namely
$F_n$, are much wilder and have a much more complicated structure, but they have also been
extensively studied in the literature since more than a hundred years ago. The goal of this
paper is to investigate direct products of the form $\ZZ^m \times F_n$, namely free-abelian
times free groups. At a first look, it may seem that many questions and problems concerning
$\ZZ^m \times F_n$ will easily reduce to the corresponding questions or problems for
$\ZZ^m$ and $F_n$; and, in fact, this is the case when the problem considered is easy or
rigid enough. However, some other naive looking questions have a considerably more
elaborated answer in $\ZZ^m \times F_n$ rather than in $\ZZ^m$ or $F_n$. This is the case,
for example, when one considers automorphisms: $\Aut (\ZZ^m \times F_n )$ naturally
contains $GL_m(\ZZ) \times \Aut(F_n)$, but there are many more automorphisms other than
those preserving the factors $\ZZ^m$ and $F_n$. This fact causes potential complications
when studying problems involving automorphisms: apart from understanding the problem in
both the free-abelian and the free parts, one has to be able to control how is it affected
by the interaction between the two parts.

Another example of this phenomena is the study of intersections of subgroups. It is well
known that every subgroup of $\ZZ^m$ is finitely generated. This is not true for free
groups $F_n$ with $n\geqslant 2$, but it is also a classical result that all these groups
satisfy the Howson property: the intersection of two finitely generated subgroups is again
finitely generated. This elementary property fails dramatically in $\ZZ^m \times F_n$, when
$m\geqslant 1$ and $n\geqslant 2$ (a very easy example reproduced below, already appears
in~\cite{burns_intersection_1998} attributed to Moldavanski). Consequently, the algorithmic
problem of computing intersections of finitely generated subgroups of $\ZZ^m \times F_n$
(including the preliminary decision problem on whether such intersection is finitely
generated or not) becomes considerably more involved than the corresponding problems in
$\ZZ^m$ (just consisting on a system of linear equations over the integers) or in $F_n$
(solved by using the pull-back technique for graphs). This is one of the algorithmic
problems addressed below (see Section \ref{sec:CIP}).

Along all the paper we shall use the following notation and conventions. For $n\geqslant
1$, $[n]$ denotes the set integers $\{ 1,\ldots ,n\}$. Vectors from $\ZZ^m$ will always be
understood as row vectors, and matrices $\textbf{M}$ will always be though as linear maps
acting on the right, $\textbf{v} \mapsto \textbf{vM}$; accordingly, morphisms will always
act on the right of the arguments, $x\mapsto x\alpha$. For notational coherence, we shall
use uppercase boldface letters for matrices, and lowercase boldface letters for vectors
(moreover, if $w\in F_n$ then $\textbf{w}\in \mathbb{Z}^n$ will typically denote its
abelianization). We shall use lowercase Greek letters for endomorphisms of free groups,
$\phi \colon F_n \to F_n$, and uppercase Greek letters for endomorphisms of free-abelian
times free groups, $\Phi \colon \mathbb{Z}^m \times F_n \to \mathbb{Z}^m \times F_n$.

The paper is organized as follows. In Section~\ref{sec:ffab}, we introduce the family of
groups we are interested in, and we import there several basic notions and properties
shared by both families of free-abelian, and free groups, such as the concepts of rank and
basis, as well as the closeness property by taking subgroups. In Section~\ref{Dehn} we
remind the folklore solution to the three classical Dehn problems within our family of
groups. In the next two sections we study some other more interesting algorithmic problems:
the finite index subgroup problem in Section~\ref{fi}, and the subgroup and the coset
intersection problems in Section~\ref{sec:CIP}. In Section~\ref{sec:morphisms} we give an
explicit description of all automorphisms, monomorphisms and endomorphisms of free-abelian
times free groups which we then use in Section~\ref{sec:fix} to study the fixed subgroup of
an endomorphism, and in Section~\ref{sec:Wh} to solve the Whitehead problem within our
family of groups.

\goodbreak

\section{Free-abelian times free groups} \label{sec:ffab}

Let $T = \{ \, t_i \mid i \in I \,\}$ and $X=\{ \, x_j \mid j\in J \, \}$ be disjoint
(possibly empty) sets of symbols, and consider the group $G$ given by the presentation
 $$
G=\left \langle \, T,X \mid [T,\, T\sqcup X] \,\right \rangle,
 $$
where $[A,B]$ denotes the set of commutators of all elements from $A$ with all elements
from $B$. Calling $Z$ and $F$ the subgroups of $G$ generated, respectively, by $T$ and $X$,
it is easy to see that $Z$ is a free-abelian group with basis $T$, and $F$ is a free group
with basis $X$. We shall refer to the subgroups $Z=\langle T\rangle$ and $F=\langle
X\rangle$ as the \emph{free-abelian} and \emph{free parts} of $G$, respectively. Now, it is
straightforward to see that $G$ is the direct product of its free-abelian and free parts,
namely
\begin{equation} \label{eq:pres F x Z abreujada}
G=\left \langle \, T,X \mid [T,\, T\sqcup X] \,\right \rangle \isom Z\times F.
\end{equation}
We say that a group is \emph{free-abelian times free} if it is isomorphic to one of the
form~\eqref{eq:pres F x Z abreujada}.

It is clear that in every word on the generators $T\sqcup X$, the letters from $T$ can
freely move, say to the left, and so every element from $G$ decomposes as a product of an
element from $Z$ and an element from $F$, in a unique way. After choosing a well ordering
of the set $T$ (whose existence is equivalent to the axiom of choice), we have a natural
\emph{normal form} for the elements in $G$, which we shall write as $\mathbf{t^a} \, w$,
where $\mathbf{a}=(a_i)_i \in \bigoplus_{i\in I}\ZZ$, $ \mathbf{t^{a}}$ stands for the
(finite) product $\Pi_{i\in I} t_i^{a_i}$ (in the given order for $T$), and $w$ is a
reduced free word on $X$.

Observe that the center of the group $G$ is $Z$ unless $F$ is infinite cyclic, in which
case $G$ is abelian and so its center is the whole $G$. This exception will create some
technical problems later on.

We shall mostly be interested in the finitely generated case, i.e.\ when $T$ and $X$ are
both finite, say $I=[m]$ and $J=[n]$ respectively, with $m,n\geqslant 0$. In this case, $Z$
is the free-abelian group of rank $m$, $Z=\ZZ^m$, $F$ is the free group of rank $n$,
$F=F_n$, and our group $G$ becomes
\begin{equation} \label{eq:pres F_n x Z^m}
G=\ZZ^m \times F_n =\langle \, t_1,\ldots,t_m,\, x_1,\ldots,x_n \mid t_it_j=t_jt_i,\, t_ix_k=x_kt_i \, \rangle,
\end{equation}
where $i,j\in [m]$ and $k\in [n]$. The normal form for an element $g\in G$ is now
$$
g=\mathbf{t^a} \, w = t_1^{a_1} \cdots \,t_m^{a_m} \, w(x_1,\ldots ,x_n) ,
$$
where $\mathbf{a}=(a_1,\ldots ,a_m)\in \ZZ^m$ is a row integral vector, and $w=w(x_1,\ldots
,x_n)$ is a reduced free word on the alphabet $X$. Note that the symbol $\mathbf{t}$ by
itself has no real meaning, but it allows us to convert the notation for the abelian group
$\ZZ^m$ from additive into multiplicative, by moving up the vectors (i.e.\ the entries of
the vectors) to the level of exponents; this will be especially convenient when working in
$G$, a noncommutative group in general.

Observe that the ranks of the free-abelian and free parts of $G$, namely $m$ and $n$, are
not invariants of the group $G$, since $\ZZ^m \times F_1\isom \ZZ^{m+1}\times F_0$.
However, as one may expect, this is the only possible redundancy and so, we can generalize
the concepts of rank and basis from the free-abelian and free contexts to the mixed
free-abelian times free situation.

\goodbreak

\begin{obs} \label{prop:caract Fn x Z^m}
Let $Z$ and $Z'$ be arbitrary free-abelian groups, and let $F$ and $F'$ be arbitrary free
groups. If $F$ and $F'$ are not infinite cyclic, then
 $$
Z\times F\isom Z'\times F' 
\,\, \Leftrightarrow \,\, \rank(Z)=\rank(Z') \text{ and } \rank(F)=\rank(F').
 $$
\end{obs}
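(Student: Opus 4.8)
The plan is to prove the nontrivial implication ($\Leftarrow$ being trivial, since direct products respect isomorphisms). So assume $Z\times F\isom Z'\times F'$ with $F,F'$ not infinite cyclic, and extract the two rank equalities separately.

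First I would recover $\rank(F)=\rank(F')$ by passing to abelianizations. Abelianizing kills nothing useful here: $(Z\times F)\ab \isom Z\times F\ab \isom \ZZ^{\rank Z}\oplus \ZZ^{\rank F}$ (all ranks possibly infinite cardinals), and similarly for the primed side; so an isomorphism $Z\times F\isom Z'\times F'$ forces $\rank(Z)+\rank(F)=\rank(Z')+\rank(F')$ as cardinals. That single equation is not enough, so the real work is to separate the two summands. The key is the center: by the observation made just before the statement, $\mathcal{Z}(Z\times F)=Z$ precisely because $F$ is \emph{not} infinite cyclic (and $F\neq 1$ is handled since if $F=1$ then $F$ is trivially not infinite cyclic and the center is all of $Z\times F=Z$; one should check this degenerate case separately, or note $\rank(F)=0$ then). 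Thus an isomorphism $\Phi\colon Z\times F\to Z'\times F'$ must carry $\mathcal{Z}(Z\times F)=Z$ onto $\mathcal{Z}(Z'\times F')=Z'$, giving $Z\isom Z'$ and hence $\rank(Z)=\rank(Z')$ (the rank of a free-abelian group is an invariant, e.g.\ via $\dim_{\QQ}(-\otimes\QQ)$ or via $\ZZ/2\ZZ$-dimension of the mod-$2$ reduction). Combining with the cardinal equation above then yields $\rank(F)=\rank(F')$, completing the proof.

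Alternatively, and perhaps more cleanly, once $Z\isom Z'$ is known one can quotient: $\Phi$ induces an isomorphism on quotients by the centers, $(Z\times F)/Z\isom (Z'\times F')/Z'$, i.e.\ $F\isom F'$, and then $\rank(F)=\rank(F')$ because the rank of a free group is an invariant (it equals $\rank(F\ab)$, again reducing to the free-abelian case). This avoids any cardinal arithmetic with possibly infinite ranks and is the route I would actually write up.

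The main obstacle is the hypothesis "$F$ not infinite cyclic": it is exactly what makes the center equal the free-abelian part, and the whole argument collapses without it (indeed $\ZZ^m\times F_1\isom\ZZ^{m+1}\times F_0$ as already noted). So the one point demanding care is the bookkeeping of the degenerate cases $F=1$ or $F'=1$ (trivial free groups), where "not infinite cyclic" holds vacuously; there one checks directly that the group is free-abelian of rank $\rank(Z)$, so its rank determines $\rank(Z)$ and forces $\rank(F)=0=\rank(F')$ on both sides. Everything else is an appeal to the already-observed center computation plus the classical invariance of rank for free and free-abelian groups.
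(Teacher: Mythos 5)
Your preferred route --- identify the center of $Z\times F$ as $Z$ (this is where $F\nisom\ZZ$ is used), deduce $Z\isom Z'$, then pass to the quotient by the center to get $F\isom F'$ and invoke invariance of rank for free-abelian and free groups --- is exactly the paper's proof, and it is correct, including your handling of the degenerate case $F=1$ (where the center is still $Z$). One remark: your first sketch, which tries to cancel $\rank(Z)=\rank(Z')$ against the abelianization identity $\rank(Z)+\rank(F)=\rank(Z')+\rank(F')$, would genuinely fail when the ranks are infinite cardinals (cardinal addition is not cancellative, e.g.\ $\aleph_0+1=\aleph_0+2$), so you were right to discard it in favour of the quotient argument.
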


\begin{proof}
It is straightforward to see that the center of $Z\times F$ is $Z$ (here is where $F\nisom
\mathbb{Z}$ is needed). On the other hand, the quotient by the center gives $(Z\times F)/Z
\isom F$. The result follows immediately.
\end{proof}

\begin{defn} \label{def:rang}
Let $G=Z\times F$ be a free-abelian times free group and assume, without loss of
generality, that $F\not\isom \mathbb{Z}$. Then, according to the previous observation, the
pair of cardinals $(\kappa,\, \varsigma)$, where $\kappa$ is the abelian rank of $Z$ and
$\varsigma$ is the rank of $F$, is an invariant of $G$, which we shall refer to as the
\emph{rank} of $G$, $\rank(G)$. (We allow this abuse of notation because the rank of $G$ in
the usual sense, namely the minimal cardinal of a set of generators, is precisely
$\kappa+\varsigma$: $G$ is, in fact, generated by a set of $\kappa+\varsigma$ elements and,
abelianizing, we get $G \ab =(Z\times F)\ab=Z\oplus F\ab$, a free-abelian group of rank
$\kappa+\varsigma$, so $G$ cannot be generated by less than $\kappa+\varsigma$ elements.)
\end{defn}

\begin{defn}\label{def:base}
Let $G=Z\times F$ be a free-abelian times free group. A pair $(A,B)$ of subsets of $G$ is
called a \emph{basis} of $G$ if the following  three conditions are satisfied:
\begin{enumerate}
\item [(i)] $A$ is an abelian basis of the center of $G$,
\item [(ii)] $B$ is empty, or a free basis of a non-abelian free subgroup of $G$ (note
    that this excludes the possibility $|B|=1$),
\item [(iii)] $\langle A \cup B\rangle= G$.
\end{enumerate}
In this case we shall also say that $A$ and $B$ are, respectively, the \emph{free-abelian}
and \emph{free} components of $(A,B)$. From (i), (ii) and (iii) it follows immediately
\begin{enumerate}
\item [(iv)] $\langle A\rangle \cap \langle B\rangle = \{ 1 \}$,
\item [(v)] $A\cap B=\emptyset$,
\end{enumerate}
since $\langle A \rangle \cap \langle B \rangle$ is contained in the center of $G$, but no
non trivial element of $\langle B\rangle$ belongs to it.

\goodbreak

Usually, we shall abuse notation and just say that $A\cup B$ \emph{is a basis} of $G$. Note
that no information is lost because we can retrieve $A$ as the elements in $A\cup B$ which
belong to the center of $G$, and $B$ as the remaining elements.
\end{defn}

Observe that, by (i), (iii) and (iv) in the previous definition, if $(A,B)$ is a basis of a
free-abelian times free group $G$, then $G=\langle A\rangle \times \langle B\rangle$; and
by (i) and (ii), $\langle A\rangle$ is a free-abelian group and $\langle B\rangle$ is a
free group not isomorphic to $\mathbb{Z}$; hence, by Observation~\ref{prop:caract Fn x
Z^m}, $\rank(G)=(|A|,\, |B|)$. In particular, this implies that $(|A|,\, |B|)$ does not
depend on the particular basis $(A,B)$ chosen.

On the other hand, the first obvious example is $T\cup X$ being a basis of the group
$G=\langle T,X\mid [T,\, T\sqcup X]\rangle$ (note that if $|X|\neq 1$ then $A=T$ and $B=X$,
but if $|X|=1$ then $A=T\cup X$ and $B=\emptyset$ due to the technical requirement in
Observation~\ref{prop:caract Fn x Z^m}). We have proved the following.

\begin{cor}
Every free-abelian times free group $G$ has bases and, every basis $(A,B)$ of $G$ satisfies
$\rank(G)=(|A|,\, |B|)$. \qed
\end{cor}

\goodbreak

Let us focus now our attention to subgroups. It is very well known that every subgroup of a
free-abelian group is free-abelian; and every subgroup of a free group is again free. These
two facts lead, with a straightforward argument, to the same property for free-abelian
times free groups (this will be crucial for the rest of the paper).

\begin{prop}\label{prop:subgs de Fn x Z^m}
The family of free-abelian times free groups is closed under taking subgroups.
\end{prop}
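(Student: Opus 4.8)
The plan is to split off the free part via the projection $\pi\colon G = Z\times F \to F$ onto the free factor, and to recognise its kernel as a central subgroup. So let $H\leqslant G=Z\times F$ be an arbitrary subgroup. First I would record the two classical facts recalled just above the statement: the image $\pi(H)\leqslant F$ is free by the Nielsen--Schreier theorem, and the kernel $H\cap Z\leqslant Z$ is free-abelian; this yields a short exact sequence
\begin{equation*}
1 \longrightarrow H\cap Z \longrightarrow H \stackrel{\pi}{\longrightarrow} \pi(H) \longrightarrow 1 .
\end{equation*}

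Since a free group is projective, this sequence splits: choosing a homomorphic section $s\colon \pi(H)\to H$ exhibits $H$ as the internal semidirect product of the normal subgroup $H\cap Z$ by the complement $s(\pi(H))\isom\pi(H)$. The point I would emphasise is that this semidirect product is in fact \emph{direct}: the subgroup $Z\times\{1\}$ is central in $G=Z\times F$ — this uses only the direct product structure, and not the identification of the full centre made earlier — so $H\cap Z$ is central in $H$ and the conjugation action defining the extension is trivial. Hence
\begin{equation*}
H \isom (H\cap Z)\times \pi(H),
\end{equation*}
a direct product of a free-abelian group and a free group, i.e.\ a free-abelian times free group, as wanted.

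I expect no real obstacle; this is the ``straightforward argument'' advertised before the statement. The only two steps deserving a word of care are the splitting, which is where freeness (equivalently, projectivity) of $\pi(H)$ is genuinely used, and the passage from a semidirect to a direct product, which rests on the free-abelian factor of $G$ being central. Note finally that the technical exclusion $F\nisom\mathbb{Z}$ appearing in Definition~\ref{def:base} plays no role here: being \emph{free-abelian times free} only requires an isomorphism with some product $Z'\times F'$, and if $\pi(H)$ turns out to be infinite cyclic it may simply be absorbed into the free-abelian factor.
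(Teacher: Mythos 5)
Your proof is correct and follows essentially the same route as the paper: project onto the free factor, split the resulting short exact sequence using freeness of $\pi(H)$, and use centrality of $Z$ to conclude the product is direct. The paper merely writes out the resulting isomorphism $h\mapsto\bigl(h(h\pi\alpha)^{-1},\, h\pi\alpha\bigr)$ explicitly instead of invoking the semidirect-to-direct argument.
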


\begin{proof}
Let $T$ and $X$ be arbitrary disjoint sets, let $G$ be the free-abelian times free group
given by presentation~(\ref{eq:pres F x Z abreujada}), and let $H\leqslant G$.

If $|X|=0,1$ then $G$ is free-abelian, and so $H$ is again free-abelian (with rank less
than or equal to that of $G$); the result follows.

\goodbreak

Assume $|X|\geqslant 2$. Let $Z=\langle T\rangle$ and $F=\langle X\rangle$ be the
free-abelian and free parts of $G$, respectively, and let us consider the natural short
exact sequence associated to the direct product structure of $G$:
 $$
1\longrightarrow Z \overset{\iota}{\longrightarrow} Z \times F =G \overset{\pi}{\longrightarrow} F \longrightarrow 1,
 $$
where $\iota$ is the inclusion, $\pi$ is the projection $\mathbf{t^{a}}w\mapsto w$, and
therefore $\ker(\pi)=Z=\im(\iota)$. Restricting this short exact sequence to $H\leqslant
G$, we get
 $$
1\longrightarrow \ker(\pi_{\mid H}) \overset{\iota}{\longrightarrow} H\overset{\pi_{\mid H}}{\longrightarrow} H\pi
\longrightarrow 1,
 $$
where $1\leqslant \ker(\pi_{\mid H})=H\cap \ker(\pi)=H\cap Z\leqslant Z$, and $1\leqslant
H\pi \leqslant F$.
Therefore, $\ker(\pi_{\mid H})$ is a free-abelian group, and $H\pi$ is a
free group. Since $H\pi$ is free, $\pi_{\mid H}$ has a splitting
\begin{equation} \label{eq:escissio alpha}
H\overset{\alpha}{\longleftarrow} H\pi,
\end{equation}
sending back each element of a chosen free basis for $H\pi$ to an arbitrary preimage.

\goodbreak

Hence, $\alpha$ is injective, $H\pi\alpha\leqslant H$ is isomorphic to $H\pi$, and
straightforward calculations show that the following map is an isomorphism:
\begin{equation} \label{eq:iso factoritzacio subgrup}
\begin{array}{rcl}
\Theta_{\alpha} \colon H & \longrightarrow & \ker(\pi_{\mid H}) \times H\pi\alpha \\ h & \longmapsto & \bigl(h(h \pi
\alpha)^{-1},\, h\pi\alpha \bigr).
\end{array}
\end{equation}
Thus $H\isom \ker(\pi_{\mid H}) \times H\pi\alpha$ is free-abelian times free and the
result is proven.
\end{proof}

This proof shows a particular way of decomposing $H$ into a direct product of a
free-abelian subgroup and a free subgroup, which depends on the chosen splitting $\alpha$,
namely
\begin{equation}\label{eq:factoritzacio subgrup}
H=(H\cap Z)\times H\pi\alpha.
\end{equation}
We call the subgroups $H\cap Z$ and $H\pi\alpha$, respectively, the \emph{free-abelian} and
\emph{free parts of $H$, with respect to the splitting $\alpha$}. Note that the
free-abelian and free parts of the subgroup $H=G$ with respect to the natural inclusion
$G\hookleftarrow F \colon \alpha$ coincide with what we called the free-abelian and free
parts of $G$.

Furthermore, Proposition~\ref{prop:subgs de Fn x Z^m} and the
decomposition~\eqref{eq:factoritzacio subgrup} give a characterization of the bases, rank,
and all possible isomorphism classes of such an arbitrary subgroup $H$.

\begin{cor}\label{cor:caracteritzacio base combinatoria}\index{base!caracteritzacions}
With the above notation, a subset $E \subseteq H\leqslant G=Z\times F$ is a basis of $H$ if
and only if
 $$
E=E_Z \sqcup E_F,
 $$
where $E_Z$ is an abelian basis of $H \cap Z$, and $E_F$ is a free basis of $H\pi \alpha$,
for a certain splitting $\alpha$ as in~\eqref{eq:escissio alpha}.
\end{cor}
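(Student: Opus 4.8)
The plan is to prove both implications by leaning entirely on Proposition~\ref{prop:subgs de Fn x Z^m}, its decomposition~\eqref{eq:factoritzacio subgrup}, and Definition~\ref{def:base}. Throughout we may assume $H$ is non-abelian with free part not infinite cyclic, since the degenerate cases ($H$ free-abelian) reduce immediately to the classical statement that a subset of a free-abelian group is an abelian basis iff it freely generates.

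For the ``if'' direction, suppose $E = E_Z \sqcup E_F$ with $E_Z$ an abelian basis of $H\cap Z$ and $E_F$ a free basis of $H\pi\alpha$ for some splitting $\alpha$ as in~\eqref{eq:escissio alpha}. I would verify the three defining conditions of a basis of $H$ directly. Condition (iii) is immediate: $\langle E_Z \cup E_F\rangle = \langle E_Z\rangle \times \langle E_F\rangle = (H\cap Z)\times H\pi\alpha = H$ by~\eqref{eq:factoritzacio subgrup}. For (i), note that the center of $H$ equals $H\cap Z$: indeed $H\cap Z \subseteq Z$ centralizes $G$ hence $H$, and conversely any central element of $H$ projects under $\pi$ into the center of $H\pi$, which is trivial since $H\pi$ is a non-abelian free group, so that element lies in $\ker(\pi_{\mid H}) = H\cap Z$. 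Thus $E_Z$ being an abelian basis of $H\cap Z$ is exactly condition~(i). Condition~(ii) holds because $H\pi\alpha \isom H\pi$ is a free group, non-abelian (as $H$ is non-abelian and $H/(H\cap Z)\isom H\pi$), and $E_F$ freely generates it.

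For the ``only if'' direction, suppose $E$ is an arbitrary basis $(A,B)$ of $H$. By condition~(i), $A$ is an abelian basis of the center of $H$, which we just identified as $H\cap Z$; so set $E_Z := A$. It remains to realize $B$ as a free basis of $H\pi\alpha$ for a suitable splitting $\alpha$. Since $G = \langle A\rangle\times\langle B\rangle$ (as observed after Definition~\ref{def:base}, applied to $H$), the restriction $\pi_{\mid H}$ kills $\langle A\rangle = H\cap Z$ and maps $\langle B\rangle$ isomorphically onto $H\pi$; hence $B\pi$ is a free basis of $H\pi$, and I define $\alpha\colon H\pi \to H$ on this basis by $(b\pi)\alpha := b$ for $b\in B$, extending to the whole of $H\pi$. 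This is a legitimate choice of splitting in the sense of~\eqref{eq:escissio alpha}, and then $H\pi\alpha = \langle B\rangle$ with $B = E_F$ a free basis. Setting $E = E_Z \sqcup E_F$ (the union being disjoint by~(v)) completes the argument.

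The only genuinely non-routine point is the identification of the center of $H$ with $H\cap Z$ and the resulting fact that a basis of $H$ must split its free-abelian component as an abelian basis of $H\cap Z$ — this is where the hypothesis that the free part is not infinite cyclic is silently used, exactly as in Observation~\ref{prop:caract Fn x Z^m}. Once that is in place, everything else is bookkeeping with the direct-product decomposition~\eqref{eq:factoritzacio subgrup} and the definition of a splitting.
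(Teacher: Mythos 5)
Your treatment of the main case ($H\pi$ non-abelian) is correct and follows essentially the same route as the paper: identify $Z(H)=H\cap Z$, take $E_Z=A$ and $E_F=B$, and for the converse observe that $\pi$ restricts to an isomorphism $\langle B\rangle\to H\pi$ whose inverse serves as the splitting $\alpha$. (Your ``if'' direction is in fact more detailed than the paper's, which dismisses that implication as straightforward.)

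The gap is in your opening reduction. You set aside the case where $H$ is free-abelian by appealing to ``a subset of a free-abelian group is an abelian basis iff it freely generates''. That disposes of the subcase $H\pi=1$, where the corollary really does collapse to that classical fact. It does \emph{not} dispose of the subcase $\rank(H\pi)=1$, where $H\isom (H\cap Z)\times\ZZ$ is abelian but $H\cap Z$ is a \emph{proper} subgroup of $H$. There, a basis of $H$ in the sense of Definition~\ref{def:base} is just an arbitrary abelian basis $A$ of $H$ (with $B=\emptyset$), whereas the corollary asserts the strictly stronger statement that $A$ splits as an abelian basis of $H\cap Z$ together with one element generating some $H\pi\alpha$; knowing that $A$ freely generates $H$ says nothing about how $A$ sits relative to the subgroup $H\cap Z$. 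This is exactly the case the paper handles by a separate argument (claiming that all but one element of $A$ lie in $H\cap Z$), and it is the only genuinely delicate point of the statement: for instance, in $\ZZ\times F_2=\langle t\rangle\times\langle a,b\rangle$ with $H=\langle t,a\rangle$, the set $\{ta,\,ta^2\}$ is an abelian basis of $H$ containing no abelian basis of $H\cap Z=\langle t\rangle$, so the ``only if'' implication cannot be waved through in this subcase (indeed this example shows that even the paper's own assertion there has to be read with care). At a minimum your proof must confront this subcase explicitly rather than fold it into the classical statement about free-abelian groups.
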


\begin{proof}
The implication to the left is straightforward, with $E=A\sqcup B$, and $(A,B)=(E_Z, E_F)$
except for the case $\rank(F)=1$, when we have $(A,B)=(E_Z\sqcup E_F, \emptyset )$.

Suppose now that $E=A\sqcup B$ is a basis of $H$ in the sense of
Definition~$\ref{def:base}$, and let us look at the decomposition~\eqref{eq:factoritzacio
subgrup}, for suitable $\alpha$. If $\rank(H\pi)=1$, then $H$ is abelian, $A$ is an abelian
basis for $H$, $B=\emptyset$ and all but exactly one of the elements in $A$ belong to
$H\cap Z$ (i.e.\ have normal forms using only letters from $T$); in this case the result is
clear, taking $E_F$ to be just that special element. Otherwise, $Z(H)=H\cap Z$ having $A$
as an abelian basis; take $E_Z =A$ and $E_F=B$. It is clear that the projection $\pi\colon
H \twoheadrightarrow H\pi,\ \mathbf{t^{a}} u \mapsto u$, restricts to an isomorphism
$\pi|_{\langle B\rangle}\colon \langle B\rangle \to H\pi$ since no nontrivial element in
$\langle B\rangle$ belong to $\ker \pi =H\cap Z$. Therefore, taking $\alpha =\pi|_{\langle
B\rangle}^{-1}$, $E_F$ is a free basis of $H\pi \alpha$.
\end{proof}

\begin{cor} \label{cor:classes isomorfia subgrups}
Let $G$ be the free-abelian times free group given by presentation~\eqref{eq:pres F x Z
abreujada}, and let $\rank(G)=(\kappa,\, \varsigma )$. Every subgroup $H\leqslant G$ is
again free-abelian times free with $\rank(H)=(\kappa',\, \varsigma' )$ where,
\begin{itemize}
\item[\emph{(i)}] in case of $\varsigma=0$: $0\leqslant \kappa'\leqslant \kappa$ and
    $\varsigma' =0$;
\item[\emph{(ii)}] in case of $\varsigma \geqslant 2$: either $0\leqslant
    \kappa'\leqslant \kappa +1$ and $\varsigma' =0$, or $0\leqslant \kappa'\leqslant
    \kappa$ and $0\leqslant \varsigma' \leqslant \max \{ \varsigma,\, \aleph_0 \}$ and
    $\varsigma'\neq 1$.
\end{itemize}
Furthermore, for every such $(\kappa',\, \varsigma' )$, there is a subgroup $H\leqslant G$
such that $\rank(H)=(\kappa',\, \varsigma' )$. \qed
\end{cor}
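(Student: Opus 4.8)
The plan is to combine Proposition~\ref{prop:subgs de Fn x Z^m} (and its decomposition~\eqref{eq:factoritzacio subgrup}) with the classical structure theory for subgroups of $\ZZ^\kappa$ and of free groups, and then handle the exceptional behaviour of the value $\varsigma'=1$ caused by the redundancy $\ZZ^m\times F_1\isom \ZZ^{m+1}\times F_0$. Concretely, given $H\leqslant G$, fix a splitting $\alpha$ and write $H=(H\cap Z)\times H\pi\alpha$. Then $H\cap Z\leqslant Z\isom\ZZ^\kappa$ is free-abelian of rank $\kappa'_0$ with $0\leqslant\kappa'_0\leqslant\kappa$, and $H\pi\leqslant F$ is free; by the Nielsen--Schreier theorem its rank $\varsigma'_0$ satisfies $\varsigma'_0=0$, or $1$, or $2\leqslant\varsigma'_0\leqslant\max\{\varsigma,\aleph_0\}$ (finitely generated subgroups of $F_\varsigma$ can already have rank up to $\varsigma$, and infinitely generated ones of a non-abelian free group have rank exactly $\aleph_0$). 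The only subtlety is that if $\varsigma'_0=1$, then $H\pi\alpha\isom\ZZ$ and $H\isom\ZZ^{\kappa'_0}\times\ZZ=\ZZ^{\kappa'_0+1}$, so in the language of Definition~\ref{def:rang} we have $\rank(H)=(\kappa'_0+1,\,0)$; this is exactly the case that pushes $\kappa'$ up to $\kappa+1$ in item~(ii) and is also why $\varsigma'=1$ is forbidden. When $\varsigma=0$, $G=\ZZ^\kappa$ is already free-abelian, every subgroup is free-abelian of rank $\leqslant\kappa$, and item~(i) is immediate.

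For the ``furthermore'' (realizability) part I would exhibit explicit subgroups. Fix a basis $(T,X)$ of $G$ with $|T|=\kappa$, $|X|=\varsigma$. Given a target $(\kappa',\varsigma')$ allowed by~(i) or~(ii): if $\varsigma'=0$ and $\kappa'\leqslant\kappa$, take $H=\langle t_1,\ldots,t_{\kappa'}\rangle$. If $\varsigma'=0$ and $\kappa'=\kappa+1$ (only when $\varsigma\geqslant 2$), take $H=\langle t_1,\ldots,t_\kappa,\,x_1\rangle\isom\ZZ^\kappa\times F_1\isom\ZZ^{\kappa+1}$. If $2\leqslant\varsigma'\leqslant\varsigma$ (necessarily finite here, as $\varsigma'\leqslant\varsigma$ and we are in case $\varsigma\geqslant 2$), take $H=\langle t_1,\ldots,t_{\kappa'},\,x_1,\ldots,x_{\varsigma'}\rangle$. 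Finally if $\varsigma'=\aleph_0$ (which is allowed in case~(ii) precisely when $\varsigma\geqslant 2$), take any infinitely generated non-abelian free subgroup $N$ of $F_2\leqslant F_\varsigma$ — for instance the normal closure of $x_1$ in $\langle x_1,x_2\rangle$, which is free of countably infinite rank — and set $H=\langle t_1,\ldots,t_{\kappa'}\rangle\times N$; by Corollary~\ref{cor:caracteritzacio base combinatoria} this has rank $(\kappa',\aleph_0)$. In every case one checks from Definition~\ref{def:base} that the displayed generating set is a basis of $H$ (the $t_i$'s generate $H\cap Z$ and the $x$-part projects isomorphically onto $H\pi$), so $\rank(H)=(\kappa',\varsigma')$ as required.

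The main obstacle — really the only non-bookkeeping point — is being careful about the cardinal arithmetic and the $\varsigma'=1$ collapse: one must verify that $\rank(H\pi)$, when viewed through Nielsen--Schreier, can take every value in $\{0\}\cup\{2,3,\ldots\}\cup(\varsigma$-range$)\cup\{\aleph_0\}$ but never $1$ as an \emph{intrinsic} rank of the free part, and that the single unit of rank absorbed by $\langle B\rangle\isom\ZZ$ when $\varsigma'_0=1$ is accounted for in the abelian component via Observation~\ref{prop:caract Fn x Z^m}. Once the bound $\varsigma'\leqslant\max\{\varsigma,\aleph_0\}$ is justified (equality $\aleph_0$ being attainable as soon as $F$ is non-abelian, hence exactly when $\varsigma\geqslant 2$), the statement follows by matching these possibilities against the decomposition~\eqref{eq:factoritzacio subgrup}, and the realizability is witnessed by the explicit subgroups above. $\qed$
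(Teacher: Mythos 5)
Your overall route is the one the paper intends (the corollary is stated with no proof beyond the remark that it follows from Proposition~\ref{prop:subgs de Fn x Z^m} and the decomposition~\eqref{eq:factoritzacio subgrup}), and the structural half of your argument is sound: $H=(H\cap Z)\times H\pi\alpha$, the abelian factor has rank at most $\kappa$, the free factor is free by Nielsen--Schreier with rank at most $\max\{\varsigma,\aleph_0\}$, and the case $\rank(H\pi)=1$ collapses into the abelian component via Observation~\ref{prop:caract Fn x Z^m}, which is exactly what produces the bound $\kappa'\leqslant\kappa+1$ and excludes $\varsigma'=1$.

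There is, however, a gap in your realizability list: it does not cover all pairs allowed by the statement. When $\varsigma\geqslant 2$ is \emph{finite}, the corollary permits any finite $\varsigma'$ with $\varsigma<\varsigma'<\aleph_0$ (since $\max\{\varsigma,\aleph_0\}=\aleph_0$), e.g.\ $\rank(H)=(0,5)$ inside $\ZZ\times F_2$; your cases only handle $2\leqslant\varsigma'\leqslant\varsigma$ and $\varsigma'=\aleph_0$. (Your parenthetical ``necessarily finite here, as $\varsigma'\leqslant\varsigma$'' also does not follow from $\varsigma\geqslant 2$; and for uncountable $\varsigma$ you would likewise miss infinite $\varsigma'$ with $\aleph_0<\varsigma'\leqslant\varsigma$.) The fix is routine: for finite $\varsigma'\geqslant 2$ take $H=\langle t_1,\ldots,t_{\kappa'}\rangle\times\langle x_1^{-i}x_2x_1^{i}\mid 0\leqslant i<\varsigma'\rangle$, which is free of rank $\varsigma'$ on the displayed generators, and for infinite $\varsigma'\leqslant\varsigma$ take $\varsigma'$ many of the $x_j$'s. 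With that amendment the proof is complete.
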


\goodbreak

Along the rest of the paper, we shall concentrate on the finitely generated case. From
Proposition~\ref{prop:subgs de Fn x Z^m} we can easily deduce the following corollary,
which will be useful later.

\begin{cor}\label{cor:H fg sii Hpi fg}
A subgroup $H$ of $\ZZ^m \times F_n$ is finitely generated if and only if its projection to
the free part $H\pi$ is finitely generated. \qed
\end{cor}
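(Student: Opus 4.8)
The plan is to exploit the direct product decomposition $H = (H\cap Z)\times H\pi\alpha$ provided by equation~\eqref{eq:factoritzacio subgrup}, together with the fact (from the proof of Proposition~\ref{prop:subgs de Fn x Z^m}) that $H\cong (H\cap Z)\times H\pi$ via any splitting $\alpha$. The forward implication is essentially immediate: if $H$ is finitely generated, then so is its homomorphic image $H\pi$ under the projection $\pi\colon \ZZ^m\times F_n \to F_n$, since a homomorphic image of a finitely generated group is finitely generated.

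For the converse, suppose $H\pi \leqslant F_n$ is finitely generated. Then $H\cap Z$ is a subgroup of $Z=\ZZ^m$, hence finitely generated (indeed of rank at most $m$), being a subgroup of a finitely generated free-abelian group. Choosing a splitting $\alpha$ as in~\eqref{eq:escissio alpha} sending a finite free basis of $H\pi$ to chosen preimages in $H$, the isomorphism $\Theta_\alpha$ of~\eqref{eq:iso factoritzacio subgrup} gives $H\cong (H\cap Z)\times H\pi\alpha$, a direct product of two finitely generated groups, and hence finitely generated. Unwinding $\Theta_\alpha$, an explicit finite generating set for $H$ is given by a finite abelian basis of $H\cap Z$ together with the chosen preimages $\{\,b_i\alpha\,\}$ of a free basis $\{\,b_i\,\}$ of $H\pi$.

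There is really no substantial obstacle here: the statement is an easy consequence of the structural work already done in Proposition~\ref{prop:subgs de Fn x Z^m} and its proof. The only point requiring a moment's care is that the splitting $\alpha$ must be chosen with a \emph{finite} domain basis so that $H\pi\alpha$ is finitely generated — but this is available precisely because $H\pi$ is assumed finitely generated (and free, hence free of finite rank). I would therefore present the proof in two or three lines, citing~\eqref{eq:factoritzacio subgrup} and the finite generation of subgroups of $\ZZ^m$, and emphasize that the content has already been established.
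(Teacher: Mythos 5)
Your proposal is correct and is exactly the argument the paper intends (the corollary is stated with a \qed precisely because it follows at once from the decomposition $H=(H\cap Z)\times H\pi\alpha$ of \eqref{eq:factoritzacio subgrup}: one direction is that homomorphic images of finitely generated groups are finitely generated, the other that $H\cap Z\leqslant \ZZ^m$ is always finitely generated and $H\pi\alpha\isom H\pi$). The only cosmetic remark is that your worry about choosing $\alpha$ with a finite domain basis is automatic: any splitting is injective, so $H\pi\alpha\isom H\pi$ regardless, and a finitely generated free group has only finite bases.
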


The proof of Proposition~\ref{prop:subgs de Fn x Z^m}, at least in the finitely generated
case, is completely algorithmic; i.e. if $H$ is given by a finite set of generators, one
can effectively choose a splitting $\alpha$, and compute a basis of the free-abelian and
free parts of $H$ (w.r.t. $\alpha$). This will be crucial for the rest of the paper, and we
make it more precise in the following proposition.

\begin{prop} \label{prop:bases algorismiques}
Let $G=\ZZ^m \times F_n$ be a finitely generated free-abelian times free group. There is an
algorithm which, given a subgroup $H\leqslant G$ by a finite family of generators, it
computes a basis for $H$ and writes both, the new elements in terms of the old generators,
and the old generators in terms of the new basis.
\end{prop}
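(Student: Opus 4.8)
The plan is to follow the proof of Proposition~\ref{prop:subgs de Fn x Z^m} step by step, checking that each non-effective choice made there can be replaced by an algorithmic one. So assume $H\leqslant G=\ZZ^m\times F_n$ is given by a finite generating set $h_1,\dots,h_r$, each $h_j=\mathbf{t}^{\mathbf{a}_j}w_j$ written in normal form.

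First I would handle the projection $\pi\colon G\twoheadrightarrow F_n$, $\mathbf{t}^{\mathbf a}w\mapsto w$. The subgroup $H\pi=\langle w_1,\dots,w_r\rangle\leqslant F_n$ is finitely generated, so via Stallings foldings one computes a folded graph for it and hence an explicit free basis $u_1,\dots,u_s$ of $H\pi$, together with an expression of each $u_i$ as a word in the $w_j$'s (Nielsen transformations / reading the folding process backwards). Next, the splitting $\alpha\colon H\pi\to H$ is made effective by choosing, for each basis element $u_i$, the preimage obtained by substituting in the corresponding word in the $w_j$'s the generators $h_j$ of $H$; call this $\widetilde u_i:=\mathbf{t}^{\mathbf{b}_i}u_i\in H$, which is a concrete element of $H$ expressed in terms of $h_1,\dots,h_r$, and set $E_F=\{\widetilde u_1,\dots,\widetilde u_s\}$ (if $s=1$, which happens only when $H\pi\cong\ZZ$, we absorb this single element into the abelian part instead, exactly as in Corollary~\ref{cor:caracteritzacio base combinatoria}).

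Then I compute the free-abelian part $H\cap Z$. By the isomorphism $\Theta_\alpha$ of~\eqref{eq:iso factoritzacio subgrup}, $H\cap Z=\ker(\pi_{\mid H})$ is generated by the elements $h_j(h_j\pi\alpha)^{-1}$; since $h_j\pi=w_j$ is a word in the $u_i$'s (computable from the folded graph by reading off which edge-path $w_j$ traces), $h_j\pi\alpha$ is the corresponding word in the $\widetilde u_i$'s, and the product $h_j(h_j\pi\alpha)^{-1}$ lies in $Z=\ZZ^m$: it has trivial free part and an explicitly computable integer vector as abelian part. Collecting these $r$ vectors in $\ZZ^m$ and running the Hermite normal form algorithm (or any effective procedure for subgroups of $\ZZ^m$) produces an abelian basis $\mathbf{t}^{\mathbf{c}_1},\dots,\mathbf{t}^{\mathbf{c}_k}$ of $H\cap Z$ together with the change-of-basis matrices in both directions; take $E_Z$ to be this set. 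By Corollary~\ref{cor:caracteritzacio base combinatoria}, $E_Z\sqcup E_F$ is then a basis of $H$ in the sense of Definition~\ref{def:base}. Finally, to write the old generators in terms of the new basis, observe that for each $j$ we have $h_j=(h_j(h_j\pi\alpha)^{-1})\cdot(h_j\pi\alpha)$, where the first factor is a known vector of $\ZZ^m$—hence, via the $\ZZ^m$ change-of-basis matrix, an explicit word in $E_Z$—and the second factor is the already-known word in $E_F$; concatenating gives $h_j$ as a word in $E_Z\sqcup E_F$.

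The only genuinely delicate point is the interface between the free-group computations and the abelian bookkeeping: one must be careful that the Stallings-graph machinery returns not just a free basis of $H\pi$ but also the two-way translation between that basis and the generators $w_j$, so that both $\alpha$ and the projections $h_j\pi$ can be expressed concretely; this is standard but needs to be invoked explicitly. Everything else—Hermite normal form over $\ZZ$, substitution of words, cancellation to verify that $h_j(h_j\pi\alpha)^{-1}\in Z$—is routine and effective. Hence the algorithm exists, completing the proof.
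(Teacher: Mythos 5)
Your proof is correct and follows essentially the same route as the paper's: make the splitting $\alpha$ effective by substituting the full generators $h_j$ into the words expressing a computed free basis of $H\pi$, generate $H\cap Z$ by the elements $h_j(h_j\pi\alpha)^{-1}$ via $\Theta_\alpha$, and finish with linear algebra over $\ZZ$ plus the factorization $h_j=(h_j(h_j\pi\alpha)^{-1})(h_j\pi\alpha)$. The only difference is cosmetic: you use Stallings foldings where the paper uses Nielsen transformations to obtain the free basis of $H\pi$ together with the two-way rewriting, and both are standard and equivalent for this purpose.
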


\begin{proof}
If $n=|X|=0,1$ then $G$ is free-abelian and the problem is a straightforward exercise in
linear algebra. So, let us assume $n\geqslant 2$.

We are given a finite set of generators for $H$, say $\mathbf{t}^\mathbf{c_1} w_1, \ldots,
\mathbf{t}^\mathbf{c_{p}} w_{p}$, where $p\geqslant 1$, $\mathbf{c_1}, \ldots,
\mathbf{c_{p}} \in \ZZ^m$ are row vectors, and $w_1, \ldots, w_{p}\in F_n$ are reduced
words on $X=\{ x_1, \ldots ,x_n \}$. Applying suitable Nielsen transformations,
see~\cite{lyndon_combinatorial_2001}, we can algorithmically transform the $p$-tuple
$(w_1,\ldots ,w_p)$ of elements from $F_n$, into another of the form $(u_1,\ldots
,u_{n'},1,\ldots ,1)$, where $\{ u_1,\ldots ,u_{n'}\}$ is a free basis of $\langle
w_1,\ldots ,w_p\rangle =H\pi$, and $0\leqslant n'\leqslant p$. Furthermore, reading along
the Nielsen process performed, we can effectively compute expressions of the new elements
as words on the old generators, say $u_j=\eta_j (w_1,\ldots ,w_p)$, $j\in [n']$, as well as expressions of the old
generators in terms of the new free basis, say $w_i =\nu_i(u_1,\ldots ,u_{n'})$, for $i\in
[p]$.

Now, the map $\alpha \colon H\pi \to H$, $u_j \mapsto \eta_j(\mathbf{t}^\mathbf{c_1} w_1,
\ldots , \mathbf{t}^\mathbf{c_{p}} w_{p})$ can serve as a splitting in the proof of
Proposition~\ref{prop:subgs de Fn x Z^m}, since $\eta_j(\mathbf{t}^\mathbf{c_1} w_1, \ldots
, \mathbf{t}^\mathbf{c_{p}} w_{p})=\mathbf{t}^{\mathbf{a_j}}\eta_j (w_1,\ldots
,w_p)=\mathbf{t}^{\mathbf{a_j}}u_j \in H$, where $\mathbf{a_j}$, $j\in [n']$, are integral
linear combinations of $\mathbf{c_1},\ldots ,\mathbf{c_p}$.

It only remains to compute an abelian basis for $\ker(\pi_{\mid H})=H\cap \ZZ^m$. For each
one of the given generators $h=\mathbf{t}^\mathbf{c_i} w_i$, compute
$h(h\pi\alpha)^{-1}=\mathbf{t}^{\mathbf{d_i}}$ (here, we shall need the words $\nu_i$
computed before). Using the isomorphism $\Theta_{\alpha}$ from the proof of
Proposition~\ref{prop:subgs de Fn x Z^m}, we deduce that $\{
\mathbf{t}^{\mathbf{d_1}},\ldots ,\mathbf{t}^{\mathbf{d_p}}\}$ generate $H\cap \ZZ^m$; it
only remains to use a standard linear algebra procedure, to extract from here an abelian
basis $\{\mathbf{t}^{\mathbf{b_1}},\ldots ,\mathbf{t}^{\mathbf{b_{m'}}} \}$ for $H\cap
\ZZ^m$. Clearly, $0\leqslant m'\leqslant m$.

We immediately get a basis $(A,B)$ for $H$ (with just a small technical caution): if
$n'\neq 1$, take $A=\{ \mathbf{t}^\mathbf{b_1}, \ldots , \mathbf{t}^\mathbf{b_{m'}} \}$ and
$B=\{ \mathbf{t}^\mathbf{a_1} u_1, \ldots , \mathbf{t}^\mathbf{a_{n'}} u_{n'} \}$; and if
$n'=1$ take $A=\{ \mathbf{t}^\mathbf{b_1}, \ldots , \mathbf{t}^\mathbf{b_{m'}},\,
\mathbf{t}^\mathbf{a_1} u_1\}$ and $B=\emptyset$.

On the other hand, as a side product of the computations done, we have the expressions
$\mathbf{t}^{\mathbf{a_j}}u_j= \eta_j(\mathbf{t}^\mathbf{c_1} w_1, \ldots ,
\mathbf{t}^\mathbf{c_{p}} w_{p})$, $j\in [n']$. And we can also compute expressions of the
$\mathbf{t}^{\mathbf{b_i}}$'s in terms of the $\mathbf{t}^{\mathbf{d_i}}$'s, and of the
$\mathbf{t}^{\mathbf{d_i}}$'s in terms of the $\mathbf{t}^\mathbf{c_i} w_i$'s. Hence we can
compute expressions for each one of the new elements in terms of the old generators.

For the other direction, we also have the expressions $w_i =\nu_i(u_1,\ldots ,u_{n'})$, for
$i\in [p]$. Hence, $\nu_i(\mathbf{t}^\mathbf{a_1} u_1, \ldots , \mathbf{t}^\mathbf{a_{n'}}
u_{n'})=\mathbf{t}^{\mathbf{e_i}}w_i$ for some $\mathbf{e_i}\in \ZZ^m$. But $H\ni
(\mathbf{t}^{\mathbf{c_i}}w_i)
(\mathbf{t}^{\mathbf{e_i}}w_i)^{-1}=\mathbf{t}^{\mathbf{c_i-e_i}}\in \ZZ^m$,  so we can
compute integers $\lambda_1,\ldots ,\lambda_{m'}$ such that $\mathbf{c_i}-\mathbf{e_i}
=\lambda_1 \mathbf{b_1}+\cdots +\lambda_{m'}\mathbf{b_{m'}}$. Thus,
$\mathbf{t}^{\mathbf{c_i}}w_i =\mathbf{t}^{\mathbf{c_i}-\mathbf{e_i}}
\mathbf{t}^{\mathbf{e_i}}w_i = \mathbf{t}^{\lambda_1 \mathbf{b_1}+\cdots
+\lambda_{m'}\mathbf{b_{m'}}} \mathbf{t}^{\mathbf{e_i}} w_i =
(\mathbf{t}^{\mathbf{b_1}})^{\lambda_1}\cdots (\mathbf{t}^{\mathbf{b_{m'}}})^{\lambda_{m'}}
\nu_i(\mathbf{t}^\mathbf{a_1} u_1, \ldots , \mathbf{t}^\mathbf{a_{n'}} u_{n'})$, for $i\in
[p]$.
\end{proof}

As a first application of Proposition~\ref{prop:bases algorismiques}, free-abelian times
free groups have solvable \emph{membership problem}. Let us state it for an arbitrary group
$G$.

\begin{problem}[\textbf{Membership Problem, $\MP(G)$}]
Given elements $g,\, h_1,\ldots ,h_p \in G$, decide whether $g\in H=\langle h_1,\ldots
,h_p\rangle$ and, in this case, computes an expression of $g$ as a word on the~$h_i$'s.
\end{problem}

\goodbreak

\begin{prop} \label{lem:Membership problem per Fn x Z^m} \index{membership problem!decidibilitat i c\`{a}lcul}
The Membership Problem for $G=\ZZ^m \times F_n$ is solvable.
\end{prop}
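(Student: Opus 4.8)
The plan is to reduce membership in $H=\langle h_1,\ldots,h_p\rangle \leqslant G=\ZZ^m\times F_n$ to a combination of membership in a finitely generated subgroup of the free group $F_n$ (solved classically by Stallings/Nielsen methods) and a system of linear equations over $\ZZ$ (solved by standard linear algebra). First I would invoke Proposition~\ref{prop:bases algorismiques} to algorithmically replace the given generating set by an explicit basis $(A,B)$ of $H$, say $A=\{\mathbf{t}^{\mathbf{b_1}},\ldots,\mathbf{t}^{\mathbf{b_{m'}}}\}$ and $B=\{\mathbf{t}^{\mathbf{a_1}}u_1,\ldots,\mathbf{t}^{\mathbf{a_{n'}}}u_{n'}\}$, together with the change-of-generators data; since any expression of $g$ as a word in the basis elements can then be rewritten back into a word in the original $h_i$'s, it suffices to solve the problem with respect to this basis. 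Recall from the decomposition~\eqref{eq:factoritzacio subgrup} that $H=(H\cap\ZZ^m)\times H\pi\alpha$, with $H\cap\ZZ^m=\langle A\rangle$ and $H\pi\alpha=\langle B\rangle$, and $H\pi=\langle u_1,\ldots,u_{n'}\rangle\leqslant F_n$.

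Next I would handle the free coordinate. Write $g=\mathbf{t}^{\mathbf{c}}w$ in normal form. Applying the projection $\pi\colon G\to F_n$, a necessary condition for $g\in H$ is $w=g\pi\in H\pi$; this is an instance of the membership problem in the free group $F_n$ for the finitely generated subgroup $\langle u_1,\ldots,u_{n'}\rangle$, which is decidable, and when it holds one can compute an expression $w=\nu(u_1,\ldots,u_{n'})$. If $w\notin H\pi$ we answer ``no''. Otherwise, consider the element $\nu(\mathbf{t}^{\mathbf{a_1}}u_1,\ldots,\mathbf{t}^{\mathbf{a_{n'}}}u_{n'})\in H$; its free part is exactly $w$, so it has the form $\mathbf{t}^{\mathbf{e}}w$ for a computable $\mathbf{e}\in\ZZ^m$. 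Then $g\in H$ if and only if $g\cdot(\mathbf{t}^{\mathbf{e}}w)^{-1}=\mathbf{t}^{\mathbf{c}-\mathbf{e}}$ lies in $H$; but this element is central (it is in $\ZZ^m$), and an element of $\ZZ^m$ lies in $H$ if and only if it lies in $H\cap\ZZ^m=\langle\mathbf{t}^{\mathbf{b_1}},\ldots,\mathbf{t}^{\mathbf{b_{m'}}}\rangle$ — equivalently, the row vector $\mathbf{c}-\mathbf{e}$ is an integral linear combination of $\mathbf{b_1},\ldots,\mathbf{b_{m'}}$, a decidable linear-algebra question which, when solvable, yields explicit coefficients $\lambda_1,\ldots,\lambda_{m'}$.

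Finally, assembling the pieces: if both tests succeed we obtain $g=(\mathbf{t}^{\mathbf{b_1}})^{\lambda_1}\cdots(\mathbf{t}^{\mathbf{b_{m'}}})^{\lambda_{m'}}\,\nu(\mathbf{t}^{\mathbf{a_1}}u_1,\ldots,\mathbf{t}^{\mathbf{a_{n'}}}u_{n'})$ as a word in the basis $(A,B)$, and then substitute the precomputed expressions of the basis elements in terms of the original $h_i$'s to produce the required word in $h_1,\ldots,h_p$; if either test fails, $g\notin H$. I do not expect a serious obstacle here: the only non-elementary ingredient is the membership problem in free groups, which is classical, and the rest is bookkeeping made available by Proposition~\ref{prop:bases algorismiques}. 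The one point deserving care is the degenerate case $n'=1$ (where $B=\emptyset$ and the single element $\mathbf{t}^{\mathbf{a_1}}u_1$ is absorbed into $A$, so $H$ is abelian), but there the whole problem collapses to linear algebra in $\ZZ^{m+1}$ after checking $w\in\langle u_1\rangle$, which is again routine. The case $n\leqslant 1$ is entirely linear algebra.
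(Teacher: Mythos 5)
Your proposal is correct and follows essentially the same route as the paper's own proof: compute a basis of $H$ via Proposition~\ref{prop:bases algorismiques}, test $w\in H\pi$ by free-group membership, lift the resulting word $\nu$ to $\nu(\mathbf{t}^{\mathbf{a_1}}u_1,\ldots,\mathbf{t}^{\mathbf{a_{n'}}}u_{n'})=\mathbf{t}^{\mathbf{e}}w\in H$, and reduce the remaining question $\mathbf{t}^{\mathbf{c}-\mathbf{e}}\in H\cap\ZZ^m$ to a system of linear equations over $\ZZ$. The only difference is your explicit (and harmless) discussion of the degenerate cases $n'=1$ and $n\leqslant 1$, which the paper leaves implicit.
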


\begin{proof}
Write $g=\mathbf{t}^{\mathbf{a}}w$. We start by computing a basis for $H$ following
Proposition~\ref{prop:bases algorismiques}, say $\{ \mathbf{t}^\mathbf{b_1}, \ldots
,\mathbf{t}^\mathbf{b_{m'}},\, \mathbf{t}^\mathbf{a_1} u_1, \ldots
,\mathbf{t}^\mathbf{a_{n'}} u_{n'}\}$. Now, check whether $g\pi =w\in H\pi=\langle
u_1,\ldots ,u_{n'}\rangle$ (membership is well known for free groups). If the answer is
negative then $g\not\in H$ and we are done. Otherwise, a standard algorithm for membership
in free groups gives us the (unique) expression of $w$ as a word on the $u_j$'s, say
$w=\omega (u_1,\ldots ,u_{n'})$. Finally, compute $\omega (\mathbf{t}^\mathbf{a_1} u_1,
\ldots ,\mathbf{t}^\mathbf{a_{n'}} u_{n'})=\mathbf{t}^{\mathbf{c}}w \in H$. It is clear
that $\mathbf{t}^{\mathbf{a}}w\in H$ if and only if $\mathbf{t}^{\mathbf{a}-\mathbf{c}}
=(\mathbf{t}^{\mathbf{a}}w)(\mathbf{t}^{\mathbf{c}}w)^{-1}\in H$ that is, if and only if
$\mathbf{a}-\mathbf{c} \in \langle \mathbf{b_1}, \ldots ,\mathbf{b_{m'}}\rangle \leqslant
\ZZ^m$. This can be checked by just solving a system of linear equations; and, in the
affirmative case, we can easily find an expression for $g$ in terms of $\{
\mathbf{t}^\mathbf{b_1}, \ldots ,\mathbf{t}^\mathbf{b_{m'}},\, \mathbf{t}^\mathbf{a_1} u_1,
\ldots ,\mathbf{t}^\mathbf{a_{n'}} u_{n'}\}$, like at the end of the previous proof.
Finally, it only remains to convert this into an expression of $g$ in terms of $\{
h_1,\ldots ,h_p \}$ using the expressions we already have for the basis elements in terms
of the $h_i$'s.
\end{proof}

\begin{cor}\label{cor:membership}
The membership problem for arbitrary free-abelian times free groups is solvable.
\end{cor}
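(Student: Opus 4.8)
The plan is to reduce the general case to the finitely generated one already settled in Proposition~\ref{lem:Membership problem per Fn x Z^m}, exploiting the fact that, although a free-abelian times free group $G$ may be infinitely generated, any finite list of its elements only involves finitely many of the generators.

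Concretely, fix a presentation $G=\langle\, T,X\mid [T,\,T\sqcup X]\,\rangle$ and an instance of the membership problem: elements $g,h_1,\ldots ,h_p\in G$, each given by its normal form $\mathbf{t^a}w$. Let $T_0\subseteq T$ and $X_0\subseteq X$ be the \emph{finite} sets of symbols that actually occur in these $p+1$ normal forms, and set $G_0=\langle T_0\sqcup X_0\rangle\leqslant G$. I would first check that $G_0$ is a finitely generated free-abelian times free group of the form $\ZZ^{|T_0|}\times F_{|X_0|}$. Indeed $\langle T_0\rangle\leqslant Z$ is free-abelian with basis $T_0$; $\langle X_0\rangle\leqslant F$ is free with basis $X_0$, since a subset of a free basis freely generates the subgroup it spans; $\langle T_0\rangle\cap\langle X_0\rangle\leqslant Z\cap F=\{1\}$; and $\langle T_0\rangle$ is central in $G$, hence commutes with $\langle X_0\rangle$. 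Therefore $G_0=\langle T_0\rangle\times\langle X_0\rangle\isom \ZZ^{|T_0|}\times F_{|X_0|}$ (the degenerate case $|X_0|\leqslant 1$ is harmless, being covered by the free-abelian part of Proposition~\ref{lem:Membership problem per Fn x Z^m}).

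Next I would observe that membership is \emph{absolute}: the subgroup $H=\langle h_1,\ldots ,h_p\rangle$ is, by definition, the set of all products of the $h_i^{\pm 1}$, a description that does not depend on the ambient group; and $g,h_1,\ldots ,h_p$ all lie in $G_0$. Hence $g\in H$ computed inside $G$ if and only if $g\in H$ computed inside $G_0$, and any expression of $g$ as a word on the $h_i$'s is equally valid in both. Consequently one may feed the instance $g,h_1,\ldots ,h_p\in G_0\isom\ZZ^{|T_0|}\times F_{|X_0|}$ into the algorithm of Proposition~\ref{lem:Membership problem per Fn x Z^m}; its answer, together with the expression it produces, solves the original instance.

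There is no substantial obstacle here: the only points requiring care are the purely bookkeeping verifications that $G_0$ has the claimed direct-product structure and that restricting attention from $G$ to $G_0$ loses nothing — both of which are immediate from the normal form and the definition of $H$.
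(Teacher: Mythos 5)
Your proposal is correct and follows essentially the same route as the paper's own proof: restrict to the finitely generated subgroup $\langle T_0\sqcup X_0\rangle\isom\ZZ^{|T_0|}\times F_{|X_0|}$ determined by the letters occurring in the given elements, and invoke Proposition~\ref{lem:Membership problem per Fn x Z^m}. The extra verifications you spell out (the direct-product structure of $G_0$ and the ambient-independence of membership) are exactly the points the paper leaves implicit.
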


\begin{proof}
We have $G=Z\times F$, where $Z=\langle T\rangle$ is an arbitrary free-abelian group and
$F=\langle X\rangle$ is an arbitrary free group. Given elements $g,\, h_1,\ldots ,h_p \in
G$, let $\{ t_1,\ldots ,t_m\}$ (resp. $\{ x_1,\ldots ,x_n \}$) be the finite set of letters
in $T$ (resp. in $X$) used by them. Obviously all these elements, as well as the subgroup
$H=\langle h_1,\ldots ,h_p\rangle$, live inside $\langle t_1,\ldots ,t_m\rangle \times
\langle x_1,\ldots ,x_n\rangle \isom \ZZ^m \times F_n$ and we can restrict our attention to
this finitely generated environment. Proposition~\ref{lem:Membership problem per Fn x Z^m}
completes the proof.
\end{proof}

\goodbreak

To conclude this section, let us introduce some notation that will be useful later. Let $H$
be a finitely generated subgroup of $G=\ZZ^m \times F_n$, and consider a basis for $H$,
\begin{equation}
\{\mathbf{t}^\mathbf{b_1}, \ldots ,\mathbf{t}^\mathbf{b_{m'}} ,\, \mathbf{t}^\mathbf{a_1} u_1, \ldots ,
\mathbf{t}^\mathbf{a_{n'}} u_{n'} \},
\end{equation}
where $0\leqslant m'\leqslant m$, $\{ \mathbf{b_1}, \ldots ,\mathbf{b_{m'}}\}$ is an
abelian basis of $H\cap \ZZ^m \leqslant \ZZ^m$, ${0\leqslant n'}$, $\mathbf{a_1}, \ldots,
\mathbf{a_{n'}} \in \ZZ^m$, and $\{ u_1, \ldots, u_{n'} \}$ is a free basis of $H\pi
\leqslant F_n$. Let $L=\langle \mathbf{b_1}, \ldots, \mathbf{b_{m'}}\rangle \leqslant
\ZZ^m$ (with additive notation, i.e.\ these are true vectors with $m$ integral coordinates
each), and let us denote by $\mathbf{A}$ the $n'\times m$ integral matrix whose rows are
the $\mathbf{a_i}$'s,
 $$
\mathbf{A}=\left( \begin{array}{c} \mathbf{a_{1}} \\ \vdots \\ \mathbf{a_{n'}} \end{array}
\right ) \in \mathcal{M}_{n' \times m}(\ZZ).
 $$
If $\omega$ is a word on $n'$ letters (i.e.\ an element of the abstract free group
$F_{n'}$), we will denote by $\omega(u_1,\ldots,u_{n'})$ the element of $H\pi$ obtained by
replacing the $i$-th letter in $\omega$ by $u_i$, $i\in [n']$. And we shall use boldface,
$\boldsymbol\omega$, to denote the abstract abelianization of $\omega$, which is an
integral vector with $n'$ coordinates, $\boldsymbol\omega \in \ZZ^{n'}$ (not to be confused
with the image of $\omega(u_1,\ldots ,u_{n'})\in F_n$ under the abelianization map $F_n
\twoheadrightarrow \ZZ^{n}$). Straightforward calculations provide the following result.

\begin{lem} \label{lem:descripcio d'un subgrup fg en termes d'una base}
With the previous notations, we have
\begin{equation*}
H=\{  \mathbf{t}^{\mathbf{a}} \, \omega (u_1, \ldots, u_{n'}) \mid \omega \in F_{n'} ,\mathbf{a} \in \boldsymbol\omega \mathbf{A} + L  \},
\end{equation*}
a convenient description of $H$. \qed
\end{lem}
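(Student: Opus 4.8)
The plan is to unwind the decomposition~\eqref{eq:factoritzacio subgrup}, $H=(H\cap Z)\times H\pi\alpha$, through the isomorphism $\Theta_\alpha$ and then track exponents. First I would fix the splitting $\alpha\colon H\pi\to H$ determined by the given basis, namely the one sending the free basis element $u_i$ of $H\pi$ to $\mathbf{t}^{\mathbf{a_i}}u_i\in H$; by Corollary~\ref{cor:caracteritzacio base combinatoria} this is a legitimate splitting and $H\pi\alpha=\langle \mathbf{t}^{\mathbf{a_1}}u_1,\ldots,\mathbf{t}^{\mathbf{a_{n'}}}u_{n'}\rangle$. Since every element of $H$ factors uniquely as an element of $H\cap Z$ times an element of $H\pi\alpha$, and $H\cap Z=\langle \mathbf{t}^{\mathbf{b_1}},\ldots,\mathbf{t}^{\mathbf{b_{m'}}}\rangle$ has abelianized image exactly $L\leqslant \ZZ^m$, it suffices to compute the image of an arbitrary element of $H\pi\alpha$.

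The key computation is the following. Take $\omega\in F_{n'}$ and consider $\omega(\mathbf{t}^{\mathbf{a_1}}u_1,\ldots,\mathbf{t}^{\mathbf{a_{n'}}}u_{n'})\in H\pi\alpha\leqslant H$. Because the $t_i$ are central in $G$, they slide freely to the front of the word, and the exponent they accumulate is precisely the integral linear combination of the $\mathbf{a_j}$ dictated by how many times (with sign) each letter $x_j$ occurs in $\omega$ — that is, by the abelianization $\boldsymbol\omega\in\ZZ^{n'}$. Concretely, $\omega(\mathbf{t}^{\mathbf{a_1}}u_1,\ldots,\mathbf{t}^{\mathbf{a_{n'}}}u_{n'}) = \mathbf{t}^{\boldsymbol\omega\mathbf{A}}\,\omega(u_1,\ldots,u_{n'})$, which is a one-line induction on the length of $\omega$ (or just an appeal to centrality of $Z$ together with the definition of $\mathbf{A}$ as the matrix with rows $\mathbf{a_i}$). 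Multiplying on the left by an arbitrary element $\mathbf{t}^{\mathbf{l}}$ with $\mathbf{l}\in L$ ranges over all of $H$, and gives $\mathbf{t}^{\mathbf{l}+\boldsymbol\omega\mathbf{A}}\,\omega(u_1,\ldots,u_{n'})$; as $\mathbf{l}$ runs over $L$ and $\omega$ runs over $F_{n'}$, the exponent $\mathbf{a}=\mathbf{l}+\boldsymbol\omega\mathbf{A}$ runs over exactly $\boldsymbol\omega\mathbf{A}+L$, while the free part runs over all of $H\pi$. This gives the containment "$\subseteq$" and "$\supseteq$" simultaneously, using Lemma~\ref{prop:subgs de Fn x Z^m}'s decomposition for surjectivity onto $H$ and uniqueness of normal forms for the reverse.

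There is essentially no obstacle here; the only point requiring a moment's care is the bookkeeping for the sign conventions (row vectors, right action of $\mathbf{A}$) in the identity $\omega(\mathbf{t}^{\mathbf{a_1}}u_1,\ldots,\mathbf{t}^{\mathbf{a_{n'}}}u_{n'})=\mathbf{t}^{\boldsymbol\omega\mathbf{A}}\,\omega(u_1,\ldots,u_{n'})$, which must be checked for inverse letters as well (an occurrence of $x_j^{-1}$ in $\omega$ contributes $-\mathbf{a_j}$, matching the corresponding $-1$ entry in $\boldsymbol\omega$). This is exactly the "straightforward calculations" the statement alludes to, so I would simply note the centrality of $T$, perform the one-step induction, and conclude.
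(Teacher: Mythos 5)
Your proposal is correct, and it is exactly the computation the paper leaves implicit (the lemma is stated with only the remark ``straightforward calculations provide the following result''): fix the splitting $\alpha\colon u_j\mapsto \mathbf{t}^{\mathbf{a_j}}u_j$, use the decomposition $H=(H\cap Z)\times H\pi\alpha$ from~\eqref{eq:factoritzacio subgrup}, and push the central $\mathbf{t}$-factors to the front to get $\omega(\mathbf{t}^{\mathbf{a_1}}u_1,\ldots,\mathbf{t}^{\mathbf{a_{n'}}}u_{n'})=\mathbf{t}^{\boldsymbol\omega\mathbf{A}}\,\omega(u_1,\ldots,u_{n'})$. Both inclusions then follow as you describe, so there is nothing to add.
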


\begin{defn}\label{def:complecio abeliana}
Given a subgroup $H\leqslant \ZZ^m \times F_n$, and an element $w\in F_n$, we define the
\emph{abelian completion of $w$ in $H$} as
 $$
\cab_{w,H} = \{ \mathbf{a}\in \ZZ^m \mid t^{\mathbf{a}} w \in H\} \subseteq \ZZ^m.
 $$
\end{defn}

\begin{cor}~\label{cor:propietats complecio abeliana}
With the above notation, for every $w\in F_n$ we have
\begin{enumerate}
\item [\emph{(i)}] if $w \not\in H \pi$, then $\cab_{w,H} = \emptyset$,
\item [\emph{(ii)}] if $w \in H \pi$, then $\cab_{w,H} = \boldsymbol\omega \mathbf{A} +
    L$, where $\boldsymbol\omega$ is the abelianization of the word $\omega$ which
    expresses $w\in F_n$ in terms of the free basis $\{ u_1,\ldots,u_{n'} \}$ (i.e.\
    $w=\omega(u_1,\ldots,u_{n'})$; note the difference between $w$ and $\omega$).
\end{enumerate}
Hence, $\cab_{w,H} \subseteq \ZZ^m$ is either empty or an affine variety with direction $L$
(i.e.\ a coset of $L$). \qed
\end{cor}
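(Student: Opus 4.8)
The plan is to derive the whole statement directly from Lemma~\ref{lem:descripcio d'un subgrup fg en termes d'una base}, using only the uniqueness of normal forms in $G$ and the fact that $\{u_1,\ldots,u_{n'}\}$ is a \emph{free basis} (not merely a generating set) of $H\pi$.

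First I would dispose of (i) with a one-line argument: if $\mathbf{a}\in\cab_{w,H}$, i.e.\ $\mathbf{t^a}w\in H$, then applying the projection $\pi\colon G\to F_n$ gives $w=(\mathbf{t^a}w)\pi\in H\pi$; contrapositively, $w\notin H\pi$ forces $\cab_{w,H}=\emptyset$.

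For (ii), assume $w\in H\pi=\langle u_1,\ldots,u_{n'}\rangle$. Since $\{u_1,\ldots,u_{n'}\}$ is a free basis, there is a \emph{unique} reduced word $\omega\in F_{n'}$ with $w=\omega(u_1,\ldots,u_{n'})$, so $\boldsymbol\omega\in\ZZ^{n'}$ is well defined. Fix $\mathbf{a}\in\ZZ^m$. By uniqueness of normal forms in $G$, the elements $\mathbf{t^a}w$ and $\mathbf{t^a}\,\omega(u_1,\ldots,u_{n'})$ are literally equal, so $\mathbf{a}\in\cab_{w,H}$ iff $\mathbf{t^a}\,\omega(u_1,\ldots,u_{n'})\in H$. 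Now I invoke Lemma~\ref{lem:descripcio d'un subgrup fg en termes d'una base}: this membership holds iff there is some $\omega'\in F_{n'}$ with $\omega'(u_1,\ldots,u_{n'})=\omega(u_1,\ldots,u_{n'})$ in $F_n$ and $\mathbf{a}\in\boldsymbol{\omega'}\mathbf{A}+L$. But $\{u_i\}$ being a free basis of $H\pi$, the equality $\omega'(u_1,\ldots,u_{n'})=\omega(u_1,\ldots,u_{n'})$ forces $\omega'=\omega$ in $F_{n'}$, hence $\boldsymbol{\omega'}=\boldsymbol\omega$. Therefore $\mathbf{a}\in\cab_{w,H}$ iff $\mathbf{a}\in\boldsymbol\omega\mathbf{A}+L$, which is the claimed equality. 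The closing remark is then immediate: $L=\langle\mathbf{b_1},\ldots,\mathbf{b_{m'}}\rangle\leqslant\ZZ^m$ is a subgroup, so $\boldsymbol\omega\mathbf{A}+L$ is a coset of $L$, i.e.\ an affine subvariety of $\ZZ^m$ with direction $L$; together with (i) this shows $\cab_{w,H}$ is either empty or such a coset.

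There is essentially no real obstacle, since the combinatorial content is already packaged in Lemma~\ref{lem:descripcio d'un subgrup fg en termes d'una base}; the only point requiring a moment's care is ensuring that $\boldsymbol\omega$ is unambiguously defined, which is exactly where the hypothesis that $\{u_1,\ldots,u_{n'}\}$ is a free basis enters and lets us pass from ``some $\omega'$ with $\omega'(u_1,\ldots,u_{n'})=w$'' to ``$\boldsymbol{\omega'}=\boldsymbol\omega$''.
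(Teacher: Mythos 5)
Your proof is correct and follows exactly the route the paper intends: the corollary is stated with no separate argument precisely because it is read off from Lemma~\ref{lem:descripcio d'un subgrup fg en termes d'una base}, with (i) coming from applying $\pi$ and (ii) from the uniqueness of the word $\omega$ guaranteed by $\{u_1,\ldots,u_{n'}\}$ being a free basis of $H\pi$. Your write-up just makes these two implicit steps explicit.
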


\goodbreak

\section{The three Dehn problems}\label{Dehn}

We shall dedicate the following sections to solve several algorithmic problems in $G=\ZZ^m
\times F_n$. The general scheme will be reducing the problem to the analogous problem on
each part, $\ZZ^m$ and $F_n$, and then apply the vast existing literature for free-abelian
and free groups. In some cases, the solutions for the free-abelian and free parts will
naturally build up a solution for $G$, while in some others the interaction between both
will be more intricate and sophisticated; everything depends on how complicated becomes the
relation between the free-abelian and free parts, with respect to the problem.

From the algorithmic point of view, the statement ``let $G$ be a group'' is not
sufficiently precise. The algorithmic behavior of $G$ may depend on how it is given to us.
For free-abelian times free groups, we will always assume that they are finitely generated
and given to us with the standard presentation~(\ref{eq:pres F_n x Z^m}). We will also
assume that the elements, subgroups, homomorphisms and other objects associated with the
group are given to us in terms of this presentation.

As a first application of the existence and computability of bases for finitely generated
subgroups of $G$, we already solved the membership problem (see
Corollary~\ref{cor:membership}), which includes the word problem. This last one, together
with the conjugacy problem, are quite elementary because of the existence of
algorithmically computable normal forms for the elements in $G$. The third of Dehn's
problems is also easy within our family of groups.

\begin{prop}
Let $G=\ZZ^m \times F_n$. Then
\begin{itemize}
\item[\emph{(i)}] the word problem for $G$ is solvable,
\item[\emph{(ii)}] the conjugacy problem for $G$ is solvable,
\item[\emph{(iii)}] the isomorphism problem is solvable within the family of finitely
    generated free-abelian times free groups.
\end{itemize}
\end{prop}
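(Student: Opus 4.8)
For part (i), the word problem: the plan is to use the algorithmically computable normal form $\mathbf{t}^{\mathbf{a}} w$ established in Section~\ref{sec:ffab}. Given any word on the generators $t_1,\ldots,t_m,x_1,\ldots,x_n$, we push all the $t_i$'s to the left using the commutation relations, collect exponents additively to get $\mathbf{a} \in \mathbb{Z}^m$, and freely reduce the resulting word $w$ on $X$. The word represents the identity if and only if $\mathbf{a} = \mathbf{0}$ and $w$ is the empty word. Alternatively, this already follows from Proposition~\ref{lem:Membership problem per Fn x Z^m}, since the word problem is the special case of membership with $H = \{1\}$ (or $H$ generated by the empty set). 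I expect this step to be entirely routine.

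For part (ii), the conjugacy problem: the plan is to reduce to the conjugacy problem in $F_n$, which is classical (two cyclically reduced words are conjugate iff one is a cyclic permutation of the other). Given $g = \mathbf{t}^{\mathbf{a}} u$ and $h = \mathbf{t}^{\mathbf{b}} v$ in normal form, I would observe that for any $k = \mathbf{t}^{\mathbf{c}} z \in G$, conjugation gives $k^{-1} g k = \mathbf{t}^{\mathbf{a}} \, z^{-1} u z$, since the $\mathbf{t}$-part is central and cancels. Hence $g$ and $h$ are conjugate in $G$ if and only if $\mathbf{a} = \mathbf{b}$ and $u$ is conjugate to $v$ in $F_n$; both conditions are decidable. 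This is again routine; the only mild care needed is noting that the $\mathbb{Z}^m$-component of a conjugate is unchanged because $\mathbb{Z}^m$ lies in the center.

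For part (iii), the isomorphism problem: here is where the real content lies. The plan is to invoke Observation~\ref{prop:caract Fn x Z^m} together with the computability of bases. Given two finitely generated free-abelian times free groups by presentations of the form~\eqref{eq:pres F_n x Z^m}, say with parameters $(m,n)$ and $(m',n')$, we first reduce each to its canonical rank $\rank(G) = (\kappa,\varsigma)$: if $n \leq 1$ the group is free-abelian of rank $m+n$, so $(\kappa,\varsigma) = (m+n, 0)$; otherwise $(\kappa,\varsigma) = (m,n)$. By Observation~\ref{prop:caract Fn x Z^m} (in the non-infinite-cyclic free-part case) and the classification of finitely generated abelian groups (in the free-abelian case), two such groups are isomorphic if and only if their canonical ranks coincide — a trivially decidable condition. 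The main obstacle, and the point that must be handled carefully, is the degenerate boundary case: when the free part is trivial or infinite cyclic, $\mathbb{Z}^m \times F_1 \cong \mathbb{Z}^{m+1} \times F_0$, so one must not naively compare $(m,n)$ pairs but rather the normalized invariant; and one should check that an abelian group $\mathbb{Z}^{k}$ is never isomorphic to a genuine $\mathbb{Z}^{m} \times F_n$ with $n \geq 2$ (which follows since the latter is non-abelian, or from Observation~\ref{prop:caract Fn x Z^m}). Once the normalization is in place, the decision procedure is immediate.
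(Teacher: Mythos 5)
Your treatments of (i) and (ii) coincide with the paper's: compute the normal form $\mathbf{t}^{\mathbf{a}}u$, test triviality, and for conjugacy use that the central $\ZZ^m$-component is invariant under conjugation so that $\mathbf{t}^{\mathbf{a}}u \sim \mathbf{t}^{\mathbf{b}}v$ iff $\mathbf{a}=\mathbf{b}$ and $u\sim v$ in $F_n$. Both are correct and routine, exactly as in the paper.

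For (iii) there is a substantive difference of scope. You assume the two groups are handed to you already in the standard form~\eqref{eq:pres F_n x Z^m}, so the parameters $(m,n)$ and $(m',n')$ are known, and the whole problem reduces to normalizing the degenerate case $\ZZ^m\times F_1\isom\ZZ^{m+1}\times F_0$ and comparing the resulting invariants via Observation~\ref{prop:caract Fn x Z^m} — which you do correctly, including the check that an abelian $\ZZ^k$ cannot be isomorphic to $\ZZ^m\times F_n$ with $n\geqslant 2$. The paper, however, proves a stronger statement: the input is a pair of \emph{arbitrary} finite presentations $\langle X\mid R\rangle$, $\langle Y\mid S\rangle$ together only with the promise that each presents a free-abelian times free group. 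There the parameters are unknown at the outset, and the paper's proof spends its entire effort on recovering them: it runs a diagonal brute-force search through the tree of Tietze transformations applicable to each presentation until a presentation of the standard form is reached (which must happen in finite time since any two finite presentations of the same group are Tietze-equivalent), and only then applies the invariant comparison you describe. Your reading is defensible given the paper's blanket convention in Section~\ref{Dehn} that groups are given by the standard presentation, and the invariant-comparison core is identical in both arguments; but as a proof of the statement the paper intends, yours is missing the enumeration step that converts an arbitrary presentation into the standard one, which is where the actual algorithmic content of (iii) lies.
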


\begin{proof}
As seen above, every element from $G$ has a normal form, easily computable from an
arbitrary expression in terms of the generators. Once in normal form,
$\mathbf{t}^{\mathbf{a}}u$ equals 1 if and only if $\mathbf{a}=\mathbf{0}$ and $u$ is the
empty word. And $\mathbf{t}^{\mathbf{a}}u$ is conjugate to $\mathbf{t}^{\mathbf{b}}v$ if
and only if $\mathbf{a}=\mathbf{b}$ and $u$ and $v$ are conjugate in $F_n$. This solves the
word and conjugacy problems in $G$.

For the isomorphism problem, let $\langle X \mid R \, \rangle$ and $\langle Y \mid S \,
\rangle$ be two arbitrary finite presentations of free-abelian times free groups $G$ and
$G'$ (i.e.\ we are given two arbitrary finite presentations plus the information that both
groups are free-abelian times free). So, both $G$ and $G'$ admit presentations of the
form~\eqref{eq:pres F_n x Z^m}, say $\mathcal{P}_{n,m}$ and $\mathcal{P}_{n',m'}$, for some
integers $m,n,m',n'\geqslant 0$, $n,n'\neq 1$ (unknown at the beginning). It is well known
that two finite presentations present the same group if and only if they are connected by a
finite sequence of Tietze transformations (see~\cite{lyndon_combinatorial_2001}); so, there
exist finite sequences of Tietze transformations, one from $\langle X \mid R \,\rangle$ to
$\mathcal{P}_{n,m}$, and another from $\langle Y \mid S \,\rangle$ to $\mathcal{P}_{n',m'}$
(again, unknown at the beginning). Let us start two diagonal procedures exploring,
respectively, the tree of all possible Tietze transformations successively aplicable to
$\langle X \mid R \, \rangle$ and $\langle Y \mid S \, \rangle$. Because of what was just
said above, both procedures will necessarily reach presentations of the desired form in
finite time. When knowing the parameters $m,n,m',n'$ we apply Observation~\ref{prop:caract
Fn x Z^m} and conclude that $\langle X \mid R \,\rangle$ and $\langle Y \mid S \,\rangle$
are isomorphic if and only if $n=n'$ and $m=m'$. (This is a brute force algorithm, very far
from being efficient from a computational point of view.)
\end{proof}

\goodbreak

\section{Finite index subgroups}\label{fi}

In this section, the goal is to find an algorithm solving the Finite Index Problem in a
free-abelian times free group $G$:

\begin{problem}[\textbf{Finite Index Problem, $\FIP(G)$}]
Given a finite list $w_1,\ldots,w_s$ of elements in $G$, decide whether the subgroup
$H=\langle w_1,\ldots,w_s \rangle$ is of finite index in $G$ and, if so, compute the index an
a system of right (or left) coset representatives for $H$.
\end{problem}

To start, we remind that this same algorithmic problem is well known to be solvable both
for free-abelian and for free groups. Given several vectors $\mathbf{w_1},\ldots
,\mathbf{w_s}\in \ZZ^m$, the subgroup $H=\langle \mathbf{w_1},\ldots ,\mathbf{w_s}\rangle$
is of finite index in $\ZZ^m$ if and only if it has rank $m$. And here is an algorithm to
make such a decision, and (in the affirmative case) to compute the index $[\ZZ^m : H]$ and a set of coset
representatives for $H$: consider the $s\times m$ integral matrix
$\mathbf{W}$ whose rows are the $\mathbf{w_i}$'s, and compute its Smith normal form, i.e.\
$\mathbf{PW}=\operatorname{\mathbf{diag}}(d_1, d_2, \ldots ,d_r,0,\ldots ,0)\mathbf{Q}$,
where $\mathbf{P}\in \GL_s(\ZZ)$, $\mathbf{Q}\in \GL_m(\ZZ)$, $d_1,\ldots ,d_r$ are
non-zero integers each dividing the following one, $d_1 \divides  d_2 \divides \cdots
\divides d_r \neq 0$, the diagonal matrix has size $s\times m$, and
$r=\rank(\mathbf{W})\leqslant \min \{s,m\}$ (fast algorithms are well known to compute all
these from $\mathbf{W}$, see~\cite{artin_algebra_2010} for details). Now, if $r<m$ then
$[\ZZ^m : H]=\infty$ and we are done. Otherwise, $H$ is the subgroup generated by the rows
of ($\textbf{W}$ and so those of) $\mathbf{PW}$, i.e.\ the image under the automorphism
$Q\colon \ZZ^m \to \ZZ^m$, $\mathbf{v}\mapsto \mathbf{vQ}$ of the subgroup $H'$ generated
by the simple vectors $(d_1,0,\ldots ,0), \ldots ,(0,\ldots ,0,d_m)$. It is clear that
$[\ZZ^m : H']=d_1d_2\cdots d_m$, with $\{ (r_1, \ldots , r_m) \mid r_i \in [d_i]\}$ being a
set of coset representatives for $H'$. Hence, $[\ZZ^m : H]=d_1d_2\cdots d_m$ as well, with
$\{ (r_1, \ldots , r_m)\mathbf{Q} \mid r_i \in [d_i]\}$ being a set of coset
representatives for $H$.

On the other hand, the subgroup $H=\langle w_1,\ldots,w_s \rangle\leqslant F_n$ has finite
index if and only if every vertex of the core of the Schreier graph for $H$, denoted
$\mathcal{S}(H)$, are complete (i.e.\ have degree $2n$); this is algorithmically checkable
by means of fast algorithms. And, in this case, the labels of paths in a chosen maximal
tree $T$ from the basepoint to each vertex (resp. from each vertex to the basepoint) give a
set of left (resp. right) coset representatives for $H$, whose index in $F_n$ is then the
number of vertices of $\mathcal{S}(H)$. For details, see~\cite{stallings_topology_1983} for
the classical reference or \cite{kapovich_stallings_2002} for a more modern and
combinatorial approach.

Hence, $\FIP(\mathbb{Z}^m)$ and $\FIP(F_n)$ are solvable. In order to build an algorithm to
solve the same problem in $\ZZ^m\times F_n$, we shall need some well known basic facts
about indices of subgroups that we state in the following two lemmas. For a subgroup
$H\leqslant G$ of an arbitrary group $G$, we will write~${H\leqslant\fin G}$ to denote
$[G:H]<\infty$.

\begin{lem} \label{lem:index de subgrups a traves de epimorfismes}
Let $G$ and $G'$ be arbitrary groups, $\rho \colon G\twoheadrightarrow G'$ an epimorphism
between them, and let $H\leqslant G$ and $H'\leqslant G'$ be arbitrary subgroups. Then,
\begin{itemize}
\item[\emph{(i)}] $[G':H\rho]\leqslant [G:H]$; in particular, if $H\leqslant\fin G$ then
    ${H\rho \leqslant\fin G'}$.
\item [\emph{(ii)}] $[G':H']=[G:H'\rho^{-1}]$; in particular, $H'\leqslant\fin G'$ if and
    only if ${H'\rho^{-1} \leqslant\fin G}$. \qed
\end{itemize}
\end{lem}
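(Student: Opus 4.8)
The statement to prove is Lemma~\ref{lem:index de subgrups a traves de epimorfismes}, a pair of elementary facts about how indices of subgroups behave under an epimorphism $\rho\colon G\twoheadrightarrow G'$. Since these are standard, the plan is to produce a clean, self-contained coset-counting argument for each part.

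For part (i), I would work directly with right cosets. The plan is to show that the map sending a right coset $Hg$ (for $g\in G$) to the right coset $(H\rho)(g\rho)$ in $G'$ is well defined and surjective. It is well defined because $Hg_1=Hg_2$ implies $g_1g_2^{-1}\in H$, hence $(g_1\rho)(g_2\rho)^{-1}=(g_1g_2^{-1})\rho\in H\rho$, so $(H\rho)(g_1\rho)=(H\rho)(g_2\rho)$; it is surjective because $\rho$ is onto, so every element of $G'$ is of the form $g\rho$. A surjection from the set of right cosets of $H$ in $G$ onto the set of right cosets of $H\rho$ in $G'$ gives $[G':H\rho]\leqslant[G:H]$, and the ``in particular'' clause is immediate.

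For part (ii), I would use the correspondence between subgroups of $G'$ and subgroups of $G$ containing $\ker\rho$. Set $K=H'\rho^{-1}$; then $\ker\rho\leqslant K\leqslant G$ and $K\rho=H'$ since $\rho$ is surjective. The plan is to exhibit a bijection between the right cosets of $H'$ in $G'$ and the right cosets of $K$ in $G$: send $Kg\mapsto (Kg)\rho=H'(g\rho)$. This is well defined and surjective by the same computation as in part (i) (applied to $K$), and it is also injective: if $H'(g_1\rho)=H'(g_2\rho)$ then $(g_1g_2^{-1})\rho\in H'$, so $g_1g_2^{-1}\in \rho^{-1}(H')=K$, i.e.\ $Kg_1=Kg_2$. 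Hence $[G:H'\rho^{-1}]=[G':H']$, and the ``in particular'' clause follows at once. One small point worth stating explicitly is that $\rho^{-1}(H')$ is indeed a subgroup (preimage of a subgroup under a homomorphism), which justifies talking about its index.

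There is essentially no obstacle here — the only thing to be a little careful about is bookkeeping of left versus right cosets (the paper uses right actions throughout, so I would phrase everything with right cosets $Hg$ to stay consistent), and to note that both equalities/inequalities hold as statements about cardinals, so the ``$H\leqslant\fin G \iff \cdots$'' equivalences require no finiteness hypothesis beyond what is written. The argument is the standard one and can be given in a few lines.
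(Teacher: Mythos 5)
Your proof is correct and complete. The paper states this lemma without proof (it is flagged as a ``well known basic fact'' and closed with a \emph{qed} symbol directly in the statement), so there is no argument in the paper to compare against; your coset-counting maps --- the surjection $Hg\mapsto (H\rho)(g\rho)$ for (i) and the bijection $Kg\mapsto H'(g\rho)$ with $K=H'\rho^{-1}\geqslant\ker\rho$ for (ii) --- are exactly the standard argument one would supply, and both the well-definedness checks and the cardinality conclusions are handled properly.
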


\begin{lem} \label{lem:index producte directe}
Let $Z$ and $F$ be arbitrary groups, and let $H\leqslant Z\times F$ be a subgroup of their
direct product. Then
 $$
[Z\times F:H]\leqslant [Z:H\cap Z]\cdot [F:H\cap F] ,
 $$
and
 $$
H\leqslant\fin Z\times F\eq H\cap Z\leqslant\fin Z \text{ and } H\cap F\leqslant\fin F.
 $$
\end{lem}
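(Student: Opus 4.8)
The plan is to prove the two displayed statements by exhibiting an explicit injection of coset spaces. First I would set up the ambient maps: let $\pi_Z\colon Z\times F\to Z$ and $\pi_F\colon Z\times F\to F$ be the two canonical projections, and consider the restriction $\rho\colon H\to (H\pi_Z)\times(H\pi_F)\leqslant Z\times F$, $h\mapsto (h\pi_Z,h\pi_F)$, which is just the inclusion $H\hookrightarrow Z\times F$ viewed through the product decomposition. The key elementary observation is that $H\cap Z=H\cap(Z\times 1)=\ker(\pi_F|_H)$ and $H\cap F=\ker(\pi_Z|_H)$, so that $H/(H\cap Z)\isom H\pi_F\leqslant F$ and $H/(H\cap F)\isom H\pi_Z\leqslant Z$ by the first isomorphism theorem.

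For the inequality $[Z\times F:H]\leqslant[Z:H\cap Z]\cdot[F:H\cap F]$, I would build a well-defined injective map from the set of right cosets $H\backslash(Z\times F)$ into $(H\cap Z)\backslash Z\,\times\,(H\cap F)\backslash F$, namely $H(z,f)\mapsto \bigl((H\cap Z)z,\,(H\cap F)f\bigr)$. Well-definedness is immediate since $H\leqslant (H\cap Z)\times(H\cap F)$ is false in general — so the subtle point is injectivity, which I would argue as follows: if $(z,f)$ and $(z',f')$ have $(H\cap Z)z=(H\cap Z)z'$ and $(H\cap F)f=(H\cap F)f'$, pick $a\in H\cap Z$ with $az=z'$ and $b\in H\cap F$ with $bf=f'$; then $(z',f')=(a,1)(z,1)(1,b)(1,f)=(ab)(z,f)$ with $ab\in H$, hence $H(z,f)=H(z',f')$. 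Wait — I must be careful that $(z,1)(1,b)=(1,b)(z,1)$ holds (it does, since $Z\times 1$ and $1\times F$ commute elementwise in the direct product), so $(z',f')=a b (z,f)$ with $ab\in H$; this gives injectivity and hence the cardinality bound. Taking cardinalities yields the displayed inequality (interpreting products with $\infty$ in the obvious way).

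For the equivalence, the ($\Leftarrow$) direction is immediate from the inequality just proved: if both $[Z:H\cap Z]$ and $[F:H\cap F]$ are finite then so is their product, hence so is $[Z\times F:H]$. For ($\Rightarrow$), suppose $H\leqslant\fin Z\times F$. Apply Lemma~\ref{lem:index de subgrups a traves de epimorfismes}(i) to the epimorphism $\pi_Z\colon Z\times F\twoheadrightarrow Z$: since $H\leqslant\fin Z\times F$, we get $H\pi_Z\leqslant\fin Z$. But $H\cap F=\ker(\pi_Z|_H)$, and more directly I would observe $H\cap Z\leqslant\fin Z$ by noting that $H\cap Z=\ker(\pi_F|_H)$ has index in $Z$ at most $[Z\times F:H]\cdot[\,?\,]$ — cleaner: $Z\cong Z\times 1\leqslant Z\times F$, and $H\cap(Z\times 1)$ has finite index in $Z\times 1$ because $H$ has finite index in $Z\times F$ and the index of an intersection with a fixed subgroup is bounded: $[Z\times 1 : H\cap(Z\times 1)]\leqslant[Z\times F:H]<\infty$. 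The same argument applied to $1\times F\cong F$ gives $H\cap F\leqslant\fin F$. I expect the main (minor) obstacle to be stating the "index of an intersection" bound $[K:H\cap K]\leqslant[G:H]$ for $K\leqslant G$ cleanly — this is the standard fact that restriction of cosets $K/(H\cap K)\to G/H$ is injective — after which everything is routine.
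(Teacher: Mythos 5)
Your proposed map $H(z,f)\mapsto\bigl((H\cap Z)z,\,(H\cap F)f\bigr)$ is not well defined, and your own parenthetical remark contains the reason: well-definedness would require every $h\in H$ to project into $H\cap Z$ and $H\cap F$, i.e.\ exactly the inclusion $H\leqslant (H\cap Z)\times(H\cap F)$ that you correctly note is false in general. A concrete failure: take $Z=F=\ZZ$ and $H=\{(k,k)\mid k\in \ZZ\}$ the diagonal; then $H\cap Z=H\cap F=\{0\}$, so $H(0,0)=H(1,1)$ while the proposed images $(0+\{0\},\,0+\{0\})$ and $(1+\{0\},\,1+\{0\})$ differ. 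Since the map does not exist, ``its injectivity'' cannot carry the inequality. What your injectivity computation actually establishes is the well-definedness of the map in the \emph{opposite} direction, $\bigl((H\cap Z)z,\,(H\cap F)f\bigr)\mapsto H(z,f)$: you show that agreement of the two coset pairs forces $(z',f')=(ab)(z,f)$ with $ab\in H$, hence agreement of the $H$-cosets. That reversed map is obviously surjective, and a surjection from the product of coset spaces onto $H\backslash (Z\times F)$ yields $[Z\times F:H]\leqslant[Z:H\cap Z]\cdot[F:H\cap F]$ just as well. This is precisely the paper's proof (stated there with left cosets). So your computation does contain the needed content, but only after the arrow is turned around; as written, the first half of the argument rests on a nonexistent map.

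The second half is essentially correct. The implication ($\Leftarrow$) follows from the inequality, and for ($\Rightarrow$) the bound $[K:H\cap K]\leqslant[G:H]$ for $K\leqslant G$ (injectivity of $K/(H\cap K)\to G/H$), applied to $K=Z\times 1$ and $K=1\times F$, is exactly the standard fact the paper invokes as ``well known''. Your initial detour through Lemma~\ref{lem:index de subgrups a traves de epimorfismes}~(i) and the projection $H\pi_Z$ should be deleted: it only gives $H\pi_Z\leqslant\fin Z$, which is strictly weaker than $H\cap Z\leqslant\fin Z$ (compare the example $H=\langle sa,tb\rangle$ discussed after the lemma in the paper), but you do abandon it in favor of the correct intersection argument.
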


\begin{proof}
It is straightforward to check that the map
\begin{equation} \label{eq:index producte directe}
\begin{array}{rcl}
Z/(H\cap Z)\ \times \ F/(H\cap F) & \rightarrow & (Z\times F)/H \\[3pt]
\bigl( z\cdot (H\cap Z)\, ,\, f\cdot (H\cap F) \bigr) & \mapsto & zf\cdot H
\end{array}
\end{equation}
is well defined and onto; the inequality and one implication follow immediately. The other
implication is a well know fact.
\end{proof}

Let $G=\ZZ^m \times F_n$, and let $H$ be a subgroup of $G$. If $H\leqslant\fin G$ then,
applying Lemma~\ref{lem:index de subgrups a traves de epimorfismes}~(i) to the canonical
projections $\tau \colon G\twoheadrightarrow \ZZ^m$ and $\pi \colon G\twoheadrightarrow
F_n$, we have that both indices $[\ZZ^m : H\tau]$ and $[F_n :H\pi]$ must also be finite.
Since we can effectively compute generators for $H\pi$ and for $H\tau$, and we can decide
whether $H\tau \leqslant\fin \ZZ^m$ and $H\pi \leqslant\fin F_n$ hold, we have two
effectively checkable necessary conditions for $H$ to be of finite index in $G$: if either
$[\ZZ^m :H\tau]$ or $[F_n :H\pi]$ is infinite, then so is $[G:H]$.

Nevertheless, these two necessary conditions together are not sufficient to ensure
finiteness of $[G:H]$, as the following easy example shows: take $H=\langle sa, tb\rangle$,
a subgroup of $G=\ZZ^2 \times F_2 = \langle s,t \mid [s,t]\rangle \times \langle a,b \mid
\, \rangle$. It is clear that $H\tau =\ZZ^2$ and $H\pi =F_2$ (so, both indices are 1), but
the index $[\ZZ^2 \times F_2 :H]$ is infinite because no power of $a$ belongs to $H$.

Note that $H\cap \ZZ^m \leqslant H\tau \leqslant \ZZ^m$ and $H\cap F_n \leqslant H\pi
\leqslant F_n$ and, according to Lemma~\ref{lem:index producte directe}, the conditions
really necessary, and sufficient, for $H$ to be of finite index in $G$ are
\begin{equation}\label{cond f.i.}
H\leqslant\fin G \,\, \Leftrightarrow \,\, \begin{cases} H\cap \ZZ^m \leqslant\fin \ZZ^m, \\ H\cap F_n \leqslant\fin H\pi, \mbox{ and } H\pi \leqslant\fin F_n,
\end{cases}
\end{equation}
both stronger than $H\tau \leqslant\fin \ZZ^m$ and $H\pi \leqslant\fin F_n$ respectively
(and none of them satisfied in the example above). This is the main observation which leads
to the following result.

\goodbreak

\begin{thm} \label{th:problema de index finit}
\index{problema de l'\'{\i}ndex finit!decidibilitat i c\`{a}lcul}
The Finite Index Problem for $\ZZ^m \times F_n$ is solvable.
\end{thm}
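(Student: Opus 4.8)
The plan is to reduce $\FIP(\ZZ^m\times F_n)$ to the already-known solvability of $\FIP(\ZZ^m)$ and $\FIP(F_n)$ by way of the criterion~\eqref{cond f.i.}. First I would run Proposition~\ref{prop:bases algorismiques} on the given generating set $w_1,\dots,w_s$ to obtain a basis $\{\mathbf{t}^\mathbf{b_1},\dots,\mathbf{t}^\mathbf{b_{m'}},\,\mathbf{t}^\mathbf{a_1}u_1,\dots,\mathbf{t}^\mathbf{a_{n'}}u_{n'}\}$ of $H$, so that $H\cap\ZZ^m=\langle\mathbf{b_1},\dots,\mathbf{b_{m'}}\rangle$ and $H\pi=\langle u_1,\dots,u_{n'}\rangle$ are explicitly available, together with the matrix $\mathbf{A}$ and the description of $H$ furnished by Lemma~\ref{lem:descripcio d'un subgrup fg en termes d'una base}. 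Then, by~\eqref{cond f.i.}, deciding $H\leqslant\fin G$ amounts to checking three conditions: $H\cap\ZZ^m\leqslant\fin\ZZ^m$, $H\pi\leqslant\fin F_n$, and $H\cap F_n\leqslant\fin H\pi$. The first is $\FIP(\ZZ^m)$ applied to $\mathbf{b_1},\dots,\mathbf{b_{m'}}$ (equivalently, $m'=m$ and the Smith normal form has nonzero diagonal), and in the affirmative case the Smith-form procedure recalled above outputs $[\ZZ^m:H\cap\ZZ^m]$ and coset representatives. The second is $\FIP(F_n)$ applied to $u_1,\dots,u_{n'}$ via the core of the Schreier graph $\mathcal{S}(H\pi)$; in the affirmative case it outputs $[F_n:H\pi]$ and coset representatives read off a maximal tree.

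The step needing a genuinely new argument is the third condition, $H\cap F_n\leqslant\fin H\pi$, since both groups are free and a priori only $H\pi$ is finitely generated. Here I would identify $H\cap F_n$ inside $H\pi$ explicitly: writing an element of $H$ as $\mathbf{t}^\mathbf{a}\omega(u_1,\dots,u_{n'})$ with $\mathbf{a}\in\boldsymbol\omega\mathbf{A}+L$ (Lemma~\ref{lem:descripcio d'un subgrup fg en termes d'una base}, with $L=\langle\mathbf{b_1},\dots,\mathbf{b_{m'}}\rangle$), such an element lies in $F_n$ exactly when $\mathbf{a}=\mathbf{0}$, i.e.\ when $\mathbf{0}\in\boldsymbol\omega\mathbf{A}+L$, equivalently $\boldsymbol\omega\mathbf{A}\in L$. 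Under the isomorphism $\pi|_{\langle u_1,\dots,u_{n'}\rangle}\colon H\pi\to F_{n'}$, the subgroup $H\cap F_n$ therefore corresponds to $\{\omega\in F_{n'}\mid \boldsymbol\omega\mathbf{A}\in L\}$, which is precisely the preimage in $F_{n'}$ of the subgroup $\{\mathbf{v}\in\ZZ^{n'}\mid\mathbf{v}\mathbf{A}\in L\}\leqslant\ZZ^{n'}$ under the abelianization $F_{n'}\twoheadrightarrow\ZZ^{n'}$. Call that subgroup $K\leqslant\ZZ^{n'}$; it is computable by linear algebra (solve $\mathbf{v}\mathbf{A}\in L$ over $\ZZ$), and since the abelianization of a finitely generated free group restricts to a surjection onto any finite-index subgroup, Lemma~\ref{lem:index de subgrups a traves de epimorfismes}(ii) gives $[H\pi:H\cap F_n]=[\ZZ^{n'}:K]$. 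Hence this index is finite iff $K$ has rank $n'$, which is again decided (and computed, with coset representatives) by the Smith-form procedure.

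Finally, assembling the answer: $H\leqslant\fin G$ iff all three indices are finite, and in that case Lemma~\ref{lem:index producte directe} combined with~\eqref{cond f.i.} gives that the natural surjection from $(\ZZ^m/(H\cap\ZZ^m))\times(F_n/H\pi)\times(H\pi/(H\cap F_n))$-type data onto $G/H$ can be made explicit; concretely, $[G:H]=[\ZZ^m:H\cap\ZZ^m]\cdot[F_n:H\pi]\cdot[H\pi:H\cap F_n]$ and a transversal for $H$ in $G$ is obtained by multiplying together the three computed transversals (lifting the $F_n$- and $H\pi$-representatives back through $\pi$ and $\alpha$, and pairing with the $\ZZ^m$-representatives), with redundant representatives removed by the membership algorithm of Proposition~\ref{lem:Membership problem per Fn x Z^m}. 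The main obstacle is precisely the middle computation of $H\cap F_n$ as a concrete finite-index subgroup of $H\pi$; once that is reduced to the linear system $\mathbf{v}\mathbf{A}\in L$ over $\ZZ$, everything else is bookkeeping on top of the classical $\FIP$ algorithms for $\ZZ^m$ and $F_n$.
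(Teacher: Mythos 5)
Your proposal follows essentially the same route as the paper: compute a basis of $H$, test the three conditions of~\eqref{cond f.i.}, and handle the only nontrivial one, $H\cap F_n\leqslant\fin H\pi$, by recognizing $H\cap F_n$ as the full preimage $(L)A^{-1}\rho^{-1}$ of $L$ under $\ZZ^{n'}\xrightarrow{\mathbf{A}}\ZZ^m$ composed with the abstract abelianization $F_{n'}\twoheadrightarrow\ZZ^{n'}$, so that Lemma~\ref{lem:index de subgrups a traves de epimorfismes}~(ii) reduces the finiteness of $[H\pi:H\cap F_n]$ to $\rank\bigl((L)A^{-1}\bigr)=n'$. That is exactly the paper's key step, and your decision procedure and construction of the candidate transversal $\{\mathbf{t^{c_i}}v_jw_k\}$ (followed by deduplication via the membership problem) are correct.

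There is, however, one false assertion in your last paragraph: the claimed equality $[G:H]=[\ZZ^m:H\cap\ZZ^m]\cdot[F_n:H\pi]\cdot[H\pi:H\cap F_n]$ does not hold in general; the product is only an upper bound. The map of Lemma~\ref{lem:index producte directe} is onto but need not be injective, and Example~\ref{exm:contraexemple igualtat indexs} in the paper exhibits $H'=\langle s,t^2,a,tb\rangle\leqslant\ZZ^2\times F_2$ with $[\ZZ^2\times F_2:H']=2$ while $[\ZZ^2:H'\cap\ZZ^2]\cdot[F_2:H'\cap F_2]=4$. Your own cleaning step (removing redundant representatives by the membership algorithm) already contradicts the equality: if the equality held, there would be nothing to remove. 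The algorithm you describe still outputs the correct index provided you take it to be the cardinality of the \emph{cleaned} transversal rather than the product of the three indices; you should state it that way and drop the equality.
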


\begin{proof}
From the given generators for $H$, we start by computing a basis of $H$ (see
Proposition~\ref{prop:bases algorismiques}),
 $$
\{ \mathbf{t}^\mathbf{b_1}, \ldots , \mathbf{t}^\mathbf{b_{m'}},\, \mathbf{t}^\mathbf{a_1} u_1, \ldots
,\mathbf{t}^\mathbf{a_{n'}} u_{n'} \},
 $$
where $0\leqslant m'\leqslant m$, $0\leqslant n'\leqslant p$, $L=\langle \mathbf{b_1},
\ldots , \mathbf{b_{m'}}\rangle \isom \ZZ^{m'}$ with abelian basis~$\{ \mathbf{b_1}, \ldots
, \mathbf{b_{m'}} \}$, $\mathbf{a_1}, \ldots, \mathbf{a_{n'}} \in \ZZ^m$, and $H\pi
=\langle u_1, \ldots, u_{n'} \rangle \isom F_{n'}$ with free basis~$\{ u_1, \ldots,
u_{n'}\}$. As above, let us write $\mathbf{A}$ for the ${n'\times m}$ integral matrix whose
rows are $\mathbf{a_i} \in \ZZ^m$, $i\in[n']$.

Note that $L=\langle \mathbf{b_1}, \ldots, \mathbf{b_{m'}}\rangle \isom H\cap \ZZ^m$ (with
the natural isomorphism $\mathbf{b}\mapsto \mathbf{t^b}$, changing the notation from
additive to multiplicative). Hence, the first necessary condition in~(\ref{cond f.i.}) is
$\rank(L)=m$, i.e.\ $m'=m$. If this is not the case, then $[G:H]=\infty$ and we are done.
So, let us assume $m'=m$ and compute a set of (right) coset representatives for $L$ in
$\ZZ^m$, say $\ZZ^m =\mathbf{c_1}L\sqcup \cdots \sqcup \mathbf{c_r}L$.

Next, check whether $H\pi=\langle u_1, \ldots ,u_{n'}\rangle$ has finite index in $F_n$ (by
computing the core of the Schreier graph of $H\pi$, and checking whether is it complete or
not). If this is not the case, then $[G:H]=\infty$ and we are done as well. So, let us
assume $H\pi \leqslant\fin F_n$, and compute a set of right coset representatives for
$H\pi$ in $F_n$, say $F_n =v_1(H\pi)\sqcup \cdots \sqcup v_s(H\pi)$.

According to~(\ref{cond f.i.}), it only remains to check whether the inclusion $H\cap F_n
\leqslant H\pi$ has finite or infinite index. Call $\rho \colon F_{n'} \twoheadrightarrow
\ZZ^{n'}$ the abstract abelianization map for the free group of rank $n'$ (with free basis
$\{ u_1,\ldots ,u_{n'}\}$), and $A\colon \ZZ^{n'} \to \ZZ^m$ the linear mapping
$\mathbf{v}\mapsto \mathbf{v}\mathbf{A}$ corresponding to right multiplication by the
matrix $\mathbf{A}$. Note that
 $$
H\cap F_n = \{ w\in F_n \mid \mathbf{0}\in \cab_{w,H} \} =\{ w\in F_n \mid \boldsymbol\omega \mathbf{A} \in L
\} \leqslant H\pi,
 $$
where $\boldsymbol\omega =\omega \rho$ is the abelianization of the word $\omega$ which
expresses $w$ in the free basis $\{ u_1,\ldots,u_{n'} \}$ of $H\pi$, i.e.\ $F_n \ni
w=\omega(u_1,\ldots ,u_{n'})$, see Corollary~\ref{cor:propietats complecio abeliana}. Thus,
$H\cap F_n$ is, in terms of the free basis $\{ u_1,\ldots ,u_{n'}\}$, the successive full
preimage of $L$, first by the map $A$ and then by the map $\rho$, namely
$(L)A^{-1}\rho^{-1}$, see the following diagram:
\begin{equation} \label{eq:diagrama index finit}\index{problema de l'\'{\i}ndex finit!diagrama}
\begin{aligned} 
 \xy
 (-21,0)*+{H \pi};
 (-21,-9)*+{H \cap F_n};
 (-21,-5)*+{\rotatebox[origin=c]{90}{$\normaleq$}};
 (0,-5)*+{\rotatebox[origin=c]{90}{$\normaleq$}};
 (23,-5)*+{\rotatebox[origin=c]{90}{$\normaleq$}};
 (41,-5)*+{\rotatebox[origin=c]{90}{$\normaleq$}};
 (-12,-9)*+{\isom};
 (-12,0)*+{\isom};
 (-27.5,0)*+{\geqslant};
 (-33,0)*+{F_n};
 {\ar@{->>}^-{\rho} (0,0)*+++{F_{n'}}; (23,0)*+++{\ZZ^{n'}}};
 {\ar^-{A} (23,0)*++++{}; (41,0)*+{\ZZ^m}};
 {\ar@{|->} (41,-9)*++{L}; (23,-9)*++{(L)A^{-1}}};
 {\ar@{|->} (23,-9)*+++++++{}; (0,-9)*+{(L)A^{-1} \rho^{-1} }};
 \endxy \\[-15pt]
\end{aligned}
\end{equation}
Hence, using Corollary~\ref{lem:index de subgrups a traves de epimorfismes}~(ii), $[H\pi
:H\cap F_n]=[F_{n'}:(L)A^{-1}\rho^{-1}]$ is finite if and only if $[\ZZ^{n'}:(L)A^{-1}]$ is
finite. And this happens if and only if $\rank((L)A^{-1})=n'$. Since
$\rank((L)A^{-1})=\rank((L\cap \im(A))A^{-1})=\rank(L\cap \im(A))+\rank(\ker(A))$, we can
immediately check whether this rank equals $n'$, or not. If this is not the case, then
$[H\pi :H\cap F_n]=[F_{n'}: (L)A^{-1}\rho^{-1}]=[\ZZ^{n'}:(L)A^{-1}]=\infty$ and we are
done. Otherwise, $(L)A^{-1}\leqslant\fin \ZZ^{n'}$ and so, $H\cap F_n \leqslant\fin H\pi$
and $H\leqslant\fin G$.

Finally, suppose $H\leqslant\fin G$ and let us explain how to compute a set of right coset
representatives for $H$ in $G$ (and so, the actual value of the index $[G:H]$). Having
followed the algorithm described above, we have $\ZZ^m =\mathbf{c_1}L\sqcup \cdots \sqcup
\mathbf{c_r}L$ and $F_n =v_1(H\pi)\sqcup \cdots \sqcup v_s(H\pi)$. Furthermore, from the
situation in the previous paragraph, we can compute a set of (right) coset representatives
for $(L)A^{-1}$ in~$\ZZ^{n'}$, which can be easily converted (see Lemma~\ref{lem:index de
subgrups a traves de epimorfismes}~(ii)) into a set of right coset representatives for
$H\cap F_n$ in~$H\pi$, say $H\pi =w_1(H\cap F_n)\sqcup \cdots \sqcup w_t(H\cap F_n)$.

Hence, $F_n =\bigsqcup_{j\in [s]} \bigsqcup_{k\in [t]} v_jw_k (H\cap F_n)$, and $[F_n :
H\cap F_n]=st$. Combining this with $\ZZ^m =\bigsqcup_{i\in [r]}\mathbf{t^{c_i}}(H\cap
\ZZ^m)$, and using the map in the proof of Lemma~\ref{lem:index producte directe}, we get
$G=\ZZ^m \times F_n =\bigcup_{i\in [r]}\bigcup_{j\in [s]}\bigcup_{k\in [t]}
\mathbf{t^{c_i}}v_jw_k H$.

It only remains a cleaning process in the family of $rst$ elements $\{
\mathbf{t^{c_i}}v_jw_k \mid {i\in [r]},\, {j\in [s]},\, {k\in [t]}\}$ to eliminate possible
duplications as representatives of right cosets of $H$ (this can be easily done by several
applications of the membership problem for $H$, see Corollary~\ref{cor:membership}). After
this cleaning process, we get a genuine set of right coset representatives for $H$ in $G$,
and the actual value of~$[G:H]$ (which is at most $rst$).

Finally, inverting all of them we will get a set of left coset representatives for~$H$
in~$G$.
\end{proof}

Regarding the computation of the index $[G:H]$, we remark that the inequality among indices
in Lemma~\ref{lem:index producte directe} may be strict, i.e.\ $[G:H]$ may be strictly less
than~$rst$, as the following example shows.

\begin{exm} \label{exm:contraexemple igualtat indexs}
Let $G=\ZZ^2 \times F_2 =\langle s,t \mid [s,t] \,\rangle \times \langle a,b \mid
\,\rangle$ and consider the (normal) subgroups $H=\langle s, t^2, a, b^2, bab\rangle$ and
$H'=\langle s, t^2, a, b^2, bab, tb\rangle =\langle s, t^2, a, tb\rangle$ of $G$ (with
bases $\{ s, t^2, a, b^2, bab\}$ and $\{ s, t^2, a, tb\}$, respectively). We have $H\cap
\ZZ^2 =H'\cap \ZZ^2 =\langle s , t^2 \rangle \leqslant_{2} \ZZ^2$, and $H\cap F_2 =H'\cap
F_2 =\langle a, b^2, bab\rangle \leqslant_{2} F_2$, but
 $$
[\ZZ^2 \times F_2:H]=4=[\ZZ^2 :H\cap \ZZ^2]\cdot [F_2 :H\cap F_2] ,
 $$
while
 $$
[\ZZ^2 \times F_2:H']=2<4=[\ZZ^2 :H'\cap \ZZ^2]\cdot [F_2 :H'\cap F_2],
 $$
with (right) coset representatives $\{1, b, t, tb\}$ and $\{1, t\}$, respectively. This
shows that both the equality and the strict inequality can occur in Lemma~\ref{lem:index
producte directe}.
\end{exm}

\section{The coset intersection problem and Howson's property} \label{sec:CIP}

Consider the following two related algorithmic problems in an arbitrary group~$G$:

\begin{problem}[\textbf{Subgroup Intersection Problem, $\SIP(G)$}]
Given finitely generated subgroups $H$ and $H'$ of $G$ (by finite sets of generators),
decide whether the intersection $H\cap H'$ is finitely generated and, if so, compute a set
of generators for it.
\end{problem}

\begin{problem}[\textbf{Coset Intersection Problem, $\CIP(G)$}]
Given finitely generated subgroups $H$ and $H'$ of $G$ (by finite sets of generators), and
elements $g,g'\in G$, decide whether the right cosets $gH$ and $g'H'$ intersect trivially
or not; and in the negative case (i.e.\ when $gH\cap g'H'=g''(H\cap H')$), compute such
a~$g''\in G$.
\end{problem}

A group $G$ is said to have the \emph{Howson property} if the intersection of every pair
(and hence every finite family) of finitely generated subgroups ${H,H'\leqslant\fg G}$ is
again finitely generated, ${H\cap H'\leqslant\fg G}$.

It is obvious that $\ZZ^m$ satisfies Howson property, since every subgroup is free-abelian
of rank less than or equal to $m$ (and so, finite). Moreover, $\SIP(\ZZ^m)$ and
$\CIP(\ZZ^m)$ just reduce to solving standard systems of linear equations.

The case of free groups is more interesting. Howson himself established in 1954 that $F_n$
also satisfies the Howson property, see~\cite{howson_intersection_1954}. Since then, there
has been several improvements of this result in the literature, both about shortening the
upper bounds for the rank of the intersection, and about simplifying the arguments used.
The modern point of view is based on the pull-back technique for graphs: one can
algorithmically represent subgroups of $F_n$ by the core of their Schreier graphs, and the
graph corresponding to $H\cap H'$ is the pull-back of the graphs corresponding to $H$ and
$H'$, easily constructible from them. This not only confirms  Howson's property for $F_n$
(namely, the pull-back of finite graphs is finite) but, more importantly, it provides the
algorithmic aspect into the topic by solving  $\SIP(F_n)$. And, more generally, an easy
variation of these arguments using pullbacks also solves  $\CIP(F_n)$, see Proposition~6.1
in~\cite{bogopolski_orbit_2009}.

Baumslag~\cite{baumslag_intersections_1966} established, as a generalization of Howson's
result, the conservation of Howson's property under free products, i.e.\ if $G_1$ and $G_2$
satisfy Howson property then so does $G_1 *G_2$. Despite it could seem against intuition,
the same result fails dramatically when replacing the free product by a direct product. And
one can find an extremely simple counterexample for this, in the family of free-abelian
times free groups; the following observation is folklore (it appears
in~\cite{burns_intersection_1998} attributed to Moldavanski, and as the solution to
exercise~23.8(3) in~\cite{bogopolski_introduction_2008}).

\begin{obs} \label{prop:Fn x Z^n no Howson}
The group $\ZZ^m \times F_n$, for $m\geqslant 1$ and $n\geqslant 2$, does not satisfy the
Howson property.
\end{obs}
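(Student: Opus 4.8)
The plan is to exhibit, for $m=1$ and $n=2$, two rank-two (hence finitely generated) subgroups of $\ZZ \times F_2$ whose intersection is free of infinite rank, and then to transport this example into $\ZZ^m \times F_n$ for all $m\geq 1$, $n\geq 2$ via the obvious embedding $\ZZ \times F_2 \hookrightarrow \ZZ^m \times F_n$ sending $t\mapsto t_1$, $a\mapsto x_1$, $b\mapsto x_2$. Since the two candidate subgroups will lie entirely inside this copy of $\ZZ\times F_2$, their intersection computed in the big group coincides with the one computed in $\ZZ\times F_2$, and being finitely generated or not is independent of the ambient group; so this reduction is immediate, and all the work is concentrated in the case $G=\ZZ\times F_2=\langle t\rangle\times\langle a,b\rangle$.

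In $G$, take $H=\langle a,b\rangle=F_2$ and $K=\langle ta,b\rangle$, each manifestly finitely generated. The key computation is to identify $H\cap K$. Since $t$ is central, every element of $K$ can be collected into normal form $t^{\sigma(w)}\,w$, where $w\in F(a,b)$ is the word obtained by erasing the occurrences of $t$ and $\sigma(w)\in\ZZ$ is the exponent sum of $a$ in $w$; moreover $w\mapsto t^{\sigma(w)}w$ is a bijection $F(a,b)\to K$. Intersecting with $H=\{\,t^{0}w\mid w\in F(a,b)\,\}$ therefore yields $H\cap K=\{\,w\in F(a,b)\mid \sigma(w)=0\,\}$, i.e.\ $H\cap K$ is exactly the kernel of the exponent-sum homomorphism $\sigma\colon F(a,b)\to\ZZ$, $a\mapsto 1$, $b\mapsto 0$.

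Finally, this kernel is the normal closure of $b$ in $F(a,b)$, freely generated by $\{\,a^{k}ba^{-k}\mid k\in\ZZ\,\}$, hence not finitely generated (equivalently: it is an infinite-index normal subgroup of a non-abelian free group, so by Nielsen--Schreier it has infinite rank). One can also package the last two steps with the machinery already set up: $K\cap\ZZ=\{1\}$, so $\pi|_{K}$ is injective and $(H\cap K)\pi=\ker\sigma$; since $\ker\sigma$ is not finitely generated, Corollary~\ref{cor:H fg sii Hpi fg} forces $H\cap K$ not to be finitely generated either. In either presentation, $\ZZ\times F_2$ — and therefore $\ZZ^m\times F_n$ for $m\geq 1$, $n\geq 2$ — fails the Howson property. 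The only step demanding a little care is the identification of $H\cap K$ with $\ker\sigma$; everything else is routine bookkeeping.
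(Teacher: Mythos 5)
Your proposal is correct and follows essentially the same route as the paper: the same pair of subgroups $\langle a,b\rangle$ and $\langle ta,b\rangle$ of $\ZZ\times F_2$, the same identification of their intersection with the kernel of the $a$-exponent-sum map (the normal closure of $b$, which has infinite rank), and the same embedding into $\ZZ^m\times F_n$ to conclude. The only cosmetic difference is your optional repackaging of the last step via Corollary~\ref{cor:H fg sii Hpi fg}, which the paper does not need.
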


\begin{proof}
In $\ZZ \times F_2 =\langle t \mid \, \rangle \times \langle a,b \mid \, \rangle$, consider
the (finitely generated) subgroups $H=\langle a,b \rangle$ and $H'=\langle ta,b \rangle$.
Clearly,
\begin{align*}
H\cap H' & =  \{ w(a,b) \mid w\in F_2\} \cap \{ w(ta,b) \mid w\in F_2\} \\ & = \{ w(a,b) \mid w\in F_2\} \cap \{ t^{|w|_a}w(a,b) \mid w\in F_2\} \\ & = \{ t^0 w(a,b) \mid w\in F_2,\,\, |w|_a=0 \} \\ & = \llangle b\rrangle_{F_2} =\langle a^{-k} b a^{k} ,\, k\in \ZZ \rangle,
\end{align*}
where $|w|_a$ is the total $a$-exponent of $w$ (i.e.\ the first coordinate of the
abelianization $\mathbf{w}\in \ZZ^2$ of $w\in F_2$). It is well known that the normal
closure of $b$ in $F_2$ is not finitely generated, hence $\ZZ \times F_2$ does not satisfy
the Howson property. Since $\ZZ \times F_2$ embeds in $\ZZ^m \times F_n$ for all
$m\geqslant 1$ and $n\geqslant 2$, the group $\ZZ^m \times F_n$ does not have this property
either.
\end{proof}

We remark that the subgroups $H$ and $H'$ in the previous counterexample are both
isomorphic to $F_2$. So, interestingly, the above is a situation where two free groups of
rank 2 have a non-finitely generated (of course, free) intersection. This does not
contradict the Howson property for free groups, but rather indicates that one cannot embed
$H$ and $H'$ simultaneously into a free subgroup of $\ZZ \times F_2$.

In the present section, we shall solve $\SIP(\ZZ^m\times F_n)$ and $\CIP(\ZZ^m\times F_n)$.
The key point is Corollary~\ref{cor:H fg sii Hpi fg}\,: $H\cap H'$ is finitely generated if
and only if $(H\cap H')\pi \leqslant F_n$ is finitely generated. Note that the group $H\pi
\cap H'\pi$ is always finitely generated (by Howson property of $F_n$), but the inclusion
$(H\cap H')\pi \leqslant H\pi \cap H'\pi$ is not (in general) an equality (for example, in
$\ZZ \times F_2 =\langle t \mid \, \rangle \times \langle a,b \mid \, \rangle$, the
subgroups $H=\langle t^2, ta^2 \rangle$ and $H'=\langle t^2, t^{2}a^3\rangle$ satisfy $a^6
\in H\pi \cap H'\pi$ but $a^6 \not\in (H\cap H')\pi$). This opens the possibility for
$(H\cap H')\pi$, and so $H\cap H'$, to be non finitely generated, as is the case in the
example from Observation~\ref{prop:Fn x Z^n no Howson}.

Let us describe in detail the data involved in $\CIP(G)$ for $G=\ZZ^m\times F_n$. By
Proposition~\ref{prop:bases algorismiques}, we can assume that the initial finitely
generated subgroups $H, H'\leqslant G$ are given by respective bases i.e.\ by two sets of
elements
\begin{equation}\label{ee'}
\begin{aligned}
E &=\{ \mathbf{t}^\mathbf{b_1}, \ldots , \mathbf{t}^\mathbf{b_{m_1}}, \mathbf{t}^\mathbf{a_1} u_1,\ldots ,
\mathbf{t}^\mathbf{a_{n_1}} u_{n_1}\}, \\ E' &=\{ \mathbf{t}^\mathbf{b'_1}, \ldots ,\mathbf{t}^\mathbf{b'_{m_2}},
\mathbf{t}^\mathbf{a'_1} u'_1, \ldots ,\mathbf{t}^\mathbf{a'_{n_2}}u'_{n_2}\},
\end{aligned}
\end{equation}
where $\{ u_1, \ldots, u_{n_1} \}$ is a free basis of $H\pi \leqslant F_n$, $\{ u'_1,
\ldots, u'_{n_2} \}$ is a free basis of $H'\pi \leqslant F_n$, $\{ \mathbf{t}^\mathbf{b_1},
\ldots ,\mathbf{t}^\mathbf{b_{m_1}} \}$ is an abelian basis of $H\cap \ZZ^m$, and $\{
\mathbf{t}^\mathbf{b'_1}, \ldots ,\mathbf{t}^\mathbf{b'_{m_2}} \}$ is an abelian basis of
$H'\cap \ZZ^m$. Consider the subgroups $L=\langle \mathbf{b_1}, \ldots,
\mathbf{b_{m_1}}\rangle \leqslant \ZZ^m$ and $L'=\langle \mathbf{b'_1}, \ldots,
\mathbf{b'_{m_2}} \rangle \leqslant \ZZ^m$, and the matrices
 $$
\mathbf{A} = \left( \begin{matrix} \mathbf{a_{1}} \\ \vdots \\ \mathbf{a_{n_1}} \end{matrix}
\right) \in \mathcal{M}_{n_1 \times m}(\ZZ) \quad \text{ and } \quad \mathbf{A'} =\left(
\begin{matrix} \mathbf{a'_{1}} \\ \vdots \\ \mathbf{a'_{n_2}} \end{matrix} \right) \in
\mathcal{M}_{n_2 \times m}(\ZZ).
 $$
We are also given two elements $g=\mathbf{t^{a}}u$ and $g'=\mathbf{t^{a'}}u'$ from $G$, and
have to algorithmically decide whether the intersection $gH\cap g'H'$ is empty or not.

Before start describing the algorithm, note that $H\pi$ is a free group of rank $n_1$.
Since $\{ u_1, \ldots, u_{n_1}\}$ is a free basis of $H\pi$, every element $w\in H\pi$ can
be written in a unique way as a word on the $u_i$'s, say $w=\omega (u_1,\ldots ,u_{n_1})$.
Abelianizing this word, we get the abelianization map $\rho_1 \colon H\pi
\twoheadrightarrow \ZZ^{n_1}$, $w\mapsto \boldsymbol{\omega}$ (not to be confused with the
restriction to $H\pi$ of the ambient abelianization $F_n \twoheadrightarrow \ZZ^n$, which
will have no role in this proof). Similarly, we define the morphism $\rho_2 \colon H'\pi
\twoheadrightarrow \ZZ^{n_2}$.

\goodbreak

With all this data given, note that $gH\cap g'H'$ is empty if and only if its projection to
the free component is empty,
 $$
gH\cap g'H'=\emptyset \,\, \Leftrightarrow \,\, (gH\cap g'H')\pi =\emptyset ;
 $$
so, it will be enough to study this last projection. And, since this projection contains
precisely those elements from $(gH)\pi \cap (g'H')\pi =(u\cdot H\pi)\cap (u'\cdot H'\pi)$
having compatible abelian completions in $gH\cap g'H'$, a direct application of
Lemma~\ref{lem:descripcio d'un subgrup fg en termes d'una base} gives the following result.

\begin{lem}\label{lem:descr proj lliure int cosets Fn x Zm}
With the above notation, the projection $(gH\cap g'H')\pi$ consists precisely on those
elements $v\in (u\cdot H\pi) \cap (u'\cdot H'\pi)$ such that
\begin{equation}\label{eq:condicio lineal inicial}
N_v =\left( \mathbf{a}+\boldsymbol{\omega}\mathbf{A}+L\right) \cap \left(
\mathbf{a'}+\boldsymbol{\omega'}\mathbf{A'}+L' \right) \neq \emptyset,
\end{equation}
where $\boldsymbol{\omega}=w\rho_1$ and $\boldsymbol{\omega'}=w'\rho_2$ are, respectively,
the abelianizations of the abstract words $\omega \in F_{n_1}$ and $\omega'\in F_{n_2}$
expressing $w=u^{-1}v\in H\pi\leqslant F_n$ and $w'=u'^{\,-1}v\in H'\pi\leqslant F_n$ in
terms of the free bases $\{ u_1,\ldots, u_{n_1} \}$ and $\{ u'_1,\ldots, u'_{n_2} \}$
(i.e.\ $u\cdot \omega(u_1,\ldots, u_{n_1})=v=u'\cdot \omega'(u'_1, \ldots, u'_{n_2})$).
That is,
\begin{equation}\label{eq:descr proj lliure int cosets Fn x Zm}
(gH\cap g'H')\pi = \{ v\in (u\cdot H\pi)\cap (u'\cdot H'\pi) \mid N_v \neq \emptyset \, \} \subseteq (u\cdot H\pi) \cap (u'\cdot H'\pi) \tag*{\qed}
\end{equation}
\end{lem}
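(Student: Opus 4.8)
The plan is to reduce the whole statement to the description of a single subgroup in terms of a basis (Lemma~\ref{lem:descripcio d'un subgrup fg en termes d'una base}, or equivalently the abelian-completion bookkeeping of Corollary~\ref{cor:propietats complecio abeliana}), applied in parallel to $H$ and to $H'$. First I would record the obvious inclusion
\[
(gH\cap g'H')\pi \subseteq (gH)\pi \cap (g'H')\pi = (u\cdot H\pi)\cap(u'\cdot H'\pi),
\]
which holds because $\pi$ is a homomorphism and $gH$, $g'H'$ are cosets with $g\pi=u$, $g'\pi=u'$. So it suffices to decide, for a fixed $v\in(u\cdot H\pi)\cap(u'\cdot H'\pi)$, whether $v\in(gH\cap g'H')\pi$. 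Since the elements of $G=\ZZ^m\times F_n$ projecting onto $v$ are exactly the elements $\mathbf{t^d}v$ with $\mathbf{d}$ ranging over $\ZZ^m$, this is the same as asking whether there exists $\mathbf{d}\in\ZZ^m$ with $\mathbf{t^d}v\in gH$ \emph{and} $\mathbf{t^d}v\in g'H'$ simultaneously.

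Next I would compute, for each coset separately, the set of admissible $\mathbf{d}$. For $gH$: since $\mathbf{t}$ is central, $g^{-1}\mathbf{t^d}v=\mathbf{t^{d-a}}(u^{-1}v)$, hence $\mathbf{t^d}v\in gH$ if and only if $\mathbf{t^{d-a}}w\in H$, where $w:=u^{-1}v$; because $v\in u\cdot H\pi$ we have $w\in H\pi$, which is precisely what puts us in case (ii), not case (i), of Corollary~\ref{cor:propietats complecio abeliana}. By that corollary, $\cab_{w,H}=\boldsymbol\omega\mathbf{A}+L$ with $\omega\in F_{n_1}$ the unique word expressing $w$ in the free basis $\{u_1,\dots,u_{n_1}\}$ and $\boldsymbol\omega=w\rho_1$ its abelianization; therefore $\{\mathbf{d}\in\ZZ^m : \mathbf{t^d}v\in gH\}=\mathbf{a}+\boldsymbol\omega\mathbf{A}+L$. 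Running the identical argument for $g'H'$ yields $\{\mathbf{d}\in\ZZ^m : \mathbf{t^d}v\in g'H'\}=\mathbf{a'}+\boldsymbol{\omega'}\mathbf{A'}+L'$, with $w':=u'^{\,-1}v\in H'\pi$ and $\boldsymbol{\omega'}=w'\rho_2$.

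Finally I would intersect the two descriptions: a common $\mathbf{d}$ exists if and only if
\[
N_v=\left(\mathbf{a}+\boldsymbol\omega\mathbf{A}+L\right)\cap\left(\mathbf{a'}+\boldsymbol{\omega'}\mathbf{A'}+L'\right)\neq\emptyset,
\]
which is exactly~\eqref{eq:condicio lineal inicial}; spelling out both inclusions ($v\in(gH\cap g'H')\pi$ gives a $\mathbf{d}$ in both affine sets, and conversely any $\mathbf{d}\in N_v$ produces $\mathbf{t^d}v\in gH\cap g'H'$ projecting to $v$) gives~\eqref{eq:descr proj lliure int cosets Fn x Zm}. There is no genuinely hard step here; the only points requiring care are the clean separation between $\omega$ (a word, resp.\ vector, on $n_1$ or $n_2$ letters) and the ambient abelianization of $v\in F_n$ — which plays no role — and the verification that $w$ and $w'$ really lie in $H\pi$ and $H'\pi$, so that the nonempty case of Corollary~\ref{cor:propietats complecio abeliana} is the one that applies.
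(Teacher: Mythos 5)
Your argument is correct and is essentially the paper's own (the paper leaves this as a direct consequence of Lemma~\ref{lem:descripcio d'un subgrup fg en termes d'una base}, i.e.\ of the abelian-completion description you invoke via Corollary~\ref{cor:propietats complecio abeliana}): you identify the fibre of $\pi$ over $v$ as $\{\mathbf{t^d}v\}$, translate membership in each coset into $\mathbf{d}$ lying in the corresponding affine variety $\mathbf{a}+\cab_{u^{-1}v,H}$ resp.\ $\mathbf{a'}+\cab_{u'^{-1}v,H'}$, and intersect. Nothing is missing; your write-up just makes explicit the "compatible abelian completions" step that the paper states in prose.
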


\begin{thm}\label{thm:cip}
The Coset Intersection Problem for $\ZZ^m \times F_n$ is solvable.
\end{thm}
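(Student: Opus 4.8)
The plan is to peel off the free part using Lemma~\ref{lem:descr proj lliure int cosets Fn x Zm} and reduce the whole problem to a single feasibility question over $\ZZ$. First I would invoke the solvability of the coset intersection problem for free groups (Proposition~6.1 in~\cite{bogopolski_orbit_2009}) to decide whether $(u\cdot H\pi)\cap(u'\cdot H'\pi)$ is empty; if it is, then $(gH\cap g'H')\pi=\emptyset$, hence $gH\cap g'H'=\emptyset$, and we are done. Otherwise this intersection is a single right coset $v_0\,K$ of $K:=H\pi\cap H'\pi$, and using the pull-back technique (so Howson's property for $F_n$) I can compute $v_0$ and a free basis $\{z_1,\ldots,z_k\}$ of the finitely generated group $K$.

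Next I would parametrize: every $v\in v_0K$ equals $v=v_0\,\zeta(z_1,\ldots,z_k)$ for a unique $\zeta\in F_k$, and conversely. Solving the membership problem in free groups, I express $u^{-1}v_0=\omega_0(u_1,\ldots,u_{n_1})$ and $z_j=\zeta_j(u_1,\ldots,u_{n_1})$ in the free basis of $H\pi$, and similarly $u'^{\,-1}v_0=\omega'_0(u'_1,\ldots,u'_{n_2})$, $z_j=\zeta'_j(u'_1,\ldots,u'_{n_2})$ in that of $H'\pi$. The crux is that abelianization turns these substitutions into matrix products: letting $\mathbf{M_1}\in\mathcal{M}_{k\times n_1}(\ZZ)$ and $\mathbf{M_2}\in\mathcal{M}_{k\times n_2}(\ZZ)$ be the matrices whose rows are $\boldsymbol{\zeta_j}$ and $\boldsymbol{\zeta'_j}$ respectively, the abelianizations appearing in Lemma~\ref{lem:descr proj lliure int cosets Fn x Zm} become $\boldsymbol\omega=\boldsymbol{\omega_0}+\boldsymbol\zeta\,\mathbf{M_1}$ and $\boldsymbol{\omega'}=\boldsymbol{\omega'_0}+\boldsymbol\zeta\,\mathbf{M_2}$, where $\boldsymbol\zeta\in\ZZ^k$ is the abelianization of $\zeta$ and runs through all of $\ZZ^k$ as $v$ runs through $v_0K$.

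Substituting these into~\eqref{eq:condicio lineal inicial} and writing $L''=L+L'\leqslant\ZZ^m$, the condition $N_v\neq\emptyset$ becomes $\boldsymbol\zeta\,\mathbf{B}\in\mathbf{d}+L''$, where $\mathbf{B}=\mathbf{M_1}\mathbf{A}-\mathbf{M_2}\mathbf{A'}\in\mathcal{M}_{k\times m}(\ZZ)$ and $\mathbf{d}=(\mathbf{a'}-\mathbf{a})+\boldsymbol{\omega'_0}\mathbf{A'}-\boldsymbol{\omega_0}\mathbf{A}\in\ZZ^m$ are explicitly computable. Thus $gH\cap g'H'\neq\emptyset$ if and only if the class of $\mathbf{d}$ in $\ZZ^m/L''$ lies in the image of the homomorphism $\ZZ^k\to\ZZ^m/L''$, $\boldsymbol\zeta\mapsto\boldsymbol\zeta\mathbf{B}+L''$; this is a membership problem in a finitely generated abelian group, decidable via a Smith normal form computation. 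If it fails, the intersection is empty. If it holds, I would compute a witness $\boldsymbol\zeta_0\in\ZZ^k$, pick any $\zeta_0\in F_k$ with abelianization $\boldsymbol\zeta_0$ (e.g.\ $x_1^{c_1}\cdots x_k^{c_k}$ when $\boldsymbol\zeta_0=(c_1,\ldots,c_k)$), set $v=v_0\,\zeta_0(z_1,\ldots,z_k)$ and solve the now-consistent linear system defining $N_v$ to obtain some $\mathbf{a''}\in N_v$. Then $g''=\mathbf{t}^{\mathbf{a''}}v$ satisfies $g^{-1}g''=\mathbf{t}^{\mathbf{a''}-\mathbf{a}}w$ with $\mathbf{a''}-\mathbf{a}\in\cab_{w,H}$ (Corollary~\ref{cor:propietats complecio abeliana}), and symmetrically $(g')^{-1}g''\in H'$, so $g''\in gH\cap g'H'$; hence $g''H=gH$, $g''H'=g'H'$, and $gH\cap g'H'=g''H\cap g''H'=g''(H\cap H')$, as required.

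The main obstacle is the second step: establishing cleanly that the two abelianization vectors $\boldsymbol\omega,\boldsymbol{\omega'}$ in Lemma~\ref{lem:descr proj lliure int cosets Fn x Zm} depend on the free variable $v\in v_0K$ through one common linear parameter $\boldsymbol\zeta\in\ZZ^k$. Once this is in place, the a priori nonlinear statement ``$\exists\,v:\ N_v\neq\emptyset$'' collapses to a single solvable integer-linear feasibility problem, and everything else is routine free-group combinatorics (Stallings graphs and pull-backs) together with standard integer linear algebra.
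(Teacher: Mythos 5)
Your proposal is correct and follows essentially the same route as the paper's proof: your matrices $\mathbf{M_1},\mathbf{M_2}$ are exactly the paper's $\mathbf{P},\mathbf{P}'$ (the abelianized inclusions of $H\pi\cap H'\pi$ into $H\pi$ and $H'\pi$), your linear feasibility condition $\boldsymbol\zeta\mathbf{B}\in\mathbf{d}+L''$ is precisely the paper's condition $M\neq\emptyset$ from Lemma~\ref{th:projeccio interseccio de cosets de Fn x Zm}, and the final recovery of $\mathbf{a''}$ via the two abelian completions is identical. The step you flag as the main obstacle is handled in the paper by the commutative diagram~\eqref{xypic:esquema interseccio cosets Fn x Z^m}, which is exactly your observation that abelianizing a substitution yields a matrix product.
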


\begin{proof}
Let $G=\ZZ^m \times F_n$ be a finitely generated free-abelian times free group. Using the
solution to  $\CIP(F_n)$, we start by checking whether $(u\cdot H\pi )\cap (u'\cdot H'\pi)$
is empty or not. In the first case $(gH\cap g'H')\pi$, and so $gH\cap g'H'$, will also be
empty and we are done. Otherwise, we can compute $v_0\in F_n$ such that
\begin{equation}\label{inter}
(u\cdot H \pi )\cap (u'\cdot H'\pi )=v_0 \cdot (H\pi \cap H'\pi),
\end{equation}
compute words $\omega_0 \in F_{n_1}$ and $\omega'_0\in F_{n_2}$ such that $u\cdot
\omega_0(u_1,\ldots, u_{n_1})=v_0 =u'\cdot \omega'_0(u'_1,\ldots, u'_{n_2})$, and compute a
free basis, $\{ v_1, \ldots ,v_{n_3}\}$, for $H\pi \cap H'\pi$ together with expressions of
the $v_i$'s in terms of the free bases for $H\pi$ and $H'\pi$, $v_i =\nu_i(u_1,\ldots,
u_{n_1})=\nu'_i(u'_1,\ldots, u'_{n_2})$, $i\in [n_3 ]$.

Let $\rho_3 \colon H\pi \cap H'\pi \twoheadrightarrow \ZZ^{n_3}$ be the corresponding
abelianization map. Abelianizing the words $\nu_i$ and $\nu'_i$, we can compute the rows of
the matrices $\mathbf{P}$ and $\mathbf{P}'$ (of sizes $n_3\times n_1$ and $n_3\times n_2$,
respectively) describing the abelianizations of the inclusion maps $H\pi
\overset{\iota}{\hookleftarrow} H\pi \cap H'\pi \overset{\iota'}{\hookrightarrow} H'\pi$,
see the central part of the diagram \eqref{xypic:esquema interseccio cosets Fn x Z^m}
below.

By~(\ref{inter}), $u^{-1}v_0 \in H\pi$ and $u'^{-1}v_0\in H'\pi$. So, left translation by
$w_0=u^{-1}v_0$ is a permutation of $H\pi$ (not a homomorphism, unless $w_0=1$), say
$\lambda_{w_0} \colon H\pi \to H\pi$, $x\mapsto w_0x=u^{-1}v_0x$. Analogously, we have the
left translation by $w'_0=u'^{-1}v_0$, say $\lambda_{w_0'} \colon H'\pi \to H'\pi$,
$x\mapsto w_0'x=u'^{-1}v_0x$. We include these translations in  our diagram:
\begin{equation} \label{xypic:esquema interseccio cosets Fn x Z^m}
\begin{aligned}
 \xy
 (0,5)*+{\rotatebox[origin=c]{270}{$\leqslant$}};
 (0,10)*{(H \cap H') \pi};
 (0,0)*+{H \pi \cap H' \pi}; (-25,0)*+{H \pi}; (25,0)*+{H' \pi};
 (-50,0)*+{H \pi}; (50,0)*+{H' \pi};
{\ar@{_(->}_-{\iota} (0,0)*++++++++++{}; (-25,0)*++++{}};
{\ar_-{\lambda_{w_0}} (-25,0)*+++++{}; (-50,0)*++++{}};
{\ar@{^(->}^-{\iota'} (0,0)*++++++++++{}; (25,0)*++++{}};
{\ar^-{\lambda_{w'_0}} (25,0)*+++++{}; (50,0)*++++{}};
 (0,-20)*+{\ZZ^{n_3}}; (-25,-20)*+{\ZZ^{n_1}}; (25,-20)*+{\ZZ^{n_2}};
 (-50,-20)*+{\ZZ^{n_1}}; (50,-20)*+{\ZZ^{n_2}};
 {\ar@{->>}^-{\rho_3} (0,0)*+++{}; (0,-20)*+++{}};
 {\ar@{->>}_-{\rho_1} (-25,0)*+++{}; (-25,-20)*+++{}};
 {\ar@{->>}_-{\rho_1} (-50,0)*+++{}; (-50,-20)*+++{}};
 {\ar@{->>}^-{\rho_2} (25,0)*+++{}; (25,-20)*+++{}};
 {\ar@{->>}^-{\rho_2} (50,0)*+++{}; (50,-20)*+++{}};
 (-12.5,-10)*+{///};(-37.5,-10)*+{///};(12.5,-10)*+{///};(37.5,-10)*+{///};
 (0,-20)*+{\ZZ^{n_3}}; (-25,-20)*+{\ZZ^{n_1}}; (25,-20)*+{\ZZ^{n_2}};
 {\ar_-{\mathbf{P}} (0,-20)*+++++{}; (-25,-20)*++++{}};
 {\ar_-{+\boldsymbol{\omega_0}} (-25,-20)*+++++{}; (-50,-20)*++++{}};
 {\ar^-{\mathbf{P}'} (0,-20)*+++++{}; (25,-20)*++++{}};
 {\ar^-{+\boldsymbol{\omega'_0}} (25,-20)*+++++{}; (50,-20)*++++{}};
 (0,-40)*+{\ZZ^{m}};
 {\ar^-{\mathbf{A}} (-25,-20)*++++{}; (0,-40)*++++{}};
 {\ar_-{\mathbf{A'}} (25,-20)*++++{}; (0,-40)*++++{}};
 {\ar_-{\mathbf{A}} (-50,-20)*++++{}; (0,-40)*++++{}};
 {\ar^-{\mathbf{A'}} (50,-20)*++++{}; (0,-40)*++++{}};
 \endxy
 \end{aligned}
 \end{equation}
where $\boldsymbol{\omega_0}=w_0\rho_1 \in \ZZ^{n_1}$ and
$\boldsymbol{\omega'_0}=w'_0\rho_2 \in \ZZ^{n_2}$ are the abelianizations of $w_0$ and
$w'_0$ with respect to the free bases $\{ u_1, \ldots ,u_{n_1}\}$ and $\{ u'_1, \ldots
,u'_{n_2}\}$, respectively.

Now, for every $v\in (u\cdot H\pi)\cap (u'\cdot H'\pi)$, using Lemma~\ref{lem:descr proj
lliure int cosets Fn x Zm} and the commutativity of the upper part of the above diagram, we
have

\begin{align*}
 N_v & =  \left( \mathbf{a}+(u^{-1}v)\rho_1\mathbf{A}+L\right) \cap \left( \mathbf{a'}
 +(u'^{-1}v) \rho_2 \mathbf{A'} +L' \right) \\
& =  \left( \mathbf{a}+(v_0^{-1}v)\iota \lambda_{w_0}\rho_1\mathbf{A}+L\right) \cap
\left( \mathbf{a'}+(v_0^{-1}v)\iota' \lambda_{w'_0}\rho_2 \mathbf{A'} +L' \right) \\
 & =  \left( \mathbf{a}
+(\boldsymbol{\omega_0}+(v_0^{-1}v)\rho_3\mathbf{P})\mathbf{A}+L\right) \cap \left(
\mathbf{a'}+ (\boldsymbol{\omega'_0}+(v_0^{-1}v) \rho_3\mathbf{P'})\mathbf{A'} +L' \right) \\
 & =  \left( \mathbf{a}+\boldsymbol{\omega_0} \mathbf{A} +(v_0^{-1}v)\rho_3
\mathbf{PA}+L\right) \cap \left( \mathbf{a'}+\boldsymbol{\omega'_0} \mathbf{A'}+(v_0^{-1}v)
\rho_3\mathbf{P'A'} +L' \right).
\end{align*}

With this expression, we can characterize, in a computable way, which elements from
$(u\cdot H\pi )\cap (u'\cdot H'\pi )$ do belong to $(gH\cap g'H')\pi$:

\begin{lem}\label{th:projeccio interseccio de cosets de Fn x Zm}
With the current notation we have
\begin{equation}\label{eq:calcul projeccio interseccio cosets}
(gH\cap g'H')\pi =M\rho_3^{-1}\lambda_{v_0} \subseteq (u\cdot H\pi )\cap (u'\cdot H'\pi ),
\end{equation}
where $M\subseteq \ZZ^{n_3}$ is the preimage by the linear mapping $\mathbf{PA-P'A'}\colon
\ZZ^{n_3} \to \ZZ^m$ of the linear variety
\begin{equation}\label{eq:variedad lineal de Z^m}
N=\mathbf{a'-a}+\boldsymbol{\omega'_0} \mathbf{A'}-\boldsymbol{\omega_0} \mathbf{A}+(L+L')
\subseteq \ZZ^m.
\end{equation}
\end{lem}

\begin{proof}
By Lemma~\ref{lem:descr proj lliure int cosets Fn x Zm}, an element $v\in (u\cdot H\pi)
\cap (u'\cdot H'\pi)$ belongs to $(gH\cap g'H')\pi$ if and only if $N_v\neq \emptyset$.
That is, if and only if the vector $\mathbf{x}=(v_0^{-1}v)\rho_3 \in \ZZ^{n_3}$ satisfies
that the two varieties $\mathbf{a}+\boldsymbol{\omega_0} \mathbf{A}+\mathbf{xPA}+L$ and
$\mathbf{a'}+\boldsymbol{\omega'_0} \mathbf{A'}+\mathbf{xP'A'} +L'$ do intersect. But this
happens if and only if the vector
 $$
\big( \mathbf{a}+\boldsymbol{\omega_0} \mathbf{A}+\mathbf{xPA} \big) - \big(\mathbf{a'}+
\boldsymbol{\omega'_0} \mathbf{A'}+\mathbf{xP'A'} \big) =\mathbf{a-a'}+\boldsymbol{\omega_0}
\mathbf{A} -\boldsymbol{\omega'_0} \mathbf{A'}+\mathbf{x(PA-P'A')}
 $$
belongs to $L+L'$. That is, if and only if $\mathbf{x(PA-P'A')}$ belongs to $N$. Hence, $v$
belongs to $(gH\cap g'H')\pi$ if and only if $\mathbf{x}=(v_0^{-1}v)\rho_3 \in M$, i.e.\ if
and only if $v\in M\rho_3^{-1}\lambda_{v_0}$.
\end{proof}

With all the data already computed, we explicitly have the variety $N$ and, using standard
linear algebra, we can compute $M$ (which could be empty, because $N$ may possibly be
disjoint with the image of $\mathbf{PA-P'A'}$). In this situation, the algorithmic decision
on whether $gH\cap g'H'$ is empty or not is straightforward.

\begin{lem}\label{cor:caracteritzacio interseccio cosets buida}
With the current notation, and assuming that $(u\cdot H \pi) \cap (u'\cdot H' \pi) \neq
\emptyset$, the following are equivalent:
\begin{itemize}
\item[\emph{(a)}] $gH\cap g'H'=\emptyset$,
\item[\emph{(b)}] $(gH\cap g'H')\pi =\emptyset$,
\item[\emph{(c)}] $M\rho_3^{-1}=\emptyset$,
\item[\emph{(d)}] $M=\emptyset$,
\item[\emph{(e)}] $N\cap \im(\mathbf{PA-P'A'})=\emptyset$. \qed
\end{itemize}
\end{lem}

If $gH\cap g'H'=\emptyset$, we are done. Otherwise, $N\cap
\im(\mathbf{PA-P'A'})\neq\emptyset$ and we can compute a vector $\mathbf{x}\in \ZZ^{n_3}$
such that $\mathbf{x}(\mathbf{PA-P'A'})\in N$. Take now any preimage of $\mathbf{x}$ by
$\rho_3$, for example $v_1^{x_1}\cdots\, v_{n_3}^{x_{n_3}}$ if $\mathbf{x}=(x_1,\ldots
,x_{n_3})$, and by (\ref{eq:calcul projeccio interseccio cosets}),
$u''=v_0v_1^{x_1}\cdots\, v_{n_3}^{x_{n_3}}\in (gH\cap g'H')\pi$.

It only remains to find $\mathbf{a''}\in \ZZ^m$ such that $g''=\mathbf{t^{a''}}u'' \in
gH\cap g'H'$. To do this, observe that $u'' \in (gH\cap g'H')\pi$ implies the existence of
a vector $\mathbf{a''}$ such that $\mathbf{t^{a''}}u''\in \mathbf{t^a}uH \cap
\mathbf{t^{a'}}u'H'$, i.e.\  such that $\mathbf{t^{a''-a}}u^{-1}u''\in H$ and
$\mathbf{t^{a''-a'}}u'^{-1}u''\in H'$. In other words, there exists a vector
$\mathbf{a''}\in \ZZ^m$ such that $\mathbf{a''-a}\in \cab_{u^{-1}u'',H}$ and
$\mathbf{a''-a'}\in \cab_{u'^{-1}u'',H'}$. That is, the affine varieties
$\mathbf{a}+\cab_{u^{-1}u'',H}$ and $\mathbf{a'}+\cab_{u'^{-1}u'',H'}$ do intersect. By
Corollary~\ref{cor:propietats complecio abeliana}, we can compute equations for these two
varieties, and compute a vector in its intersection. This is the $\mathbf{a''}\in \ZZ^m$ we
are looking for.
\end{proof}

The above argument applied to the case where $g=g'=1$ is giving us valuable information
about the subgroup intersection $H\cap H'$; this will allow us to solve  $\SIP(\ZZ^m \times
F_n)$ as well. Note that, in this case, $\mathbf{a}=\mathbf{a'}=\mathbf{0}$, $u=u'=1$ and
so, $v_0=1$, $w_0=w'_0=1$, and $\boldsymbol{\omega_0}=\boldsymbol{\omega'_0}=\mathbf{0}$.
\goodbreak
\begin{thm}\label{thm:ip}
The Subgroup Intersection Problem for $\ZZ^m \times F_n$ is solvable.
\end{thm}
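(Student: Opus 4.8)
The plan is to specialise the construction carried out in the proof of Theorem~\ref{thm:cip} to the case $g=g'=1$, and then to analyse, in a computable way, exactly when the resulting free projection is finitely generated. First I would run that algorithm with $g=g'=1$; then $\mathbf{a}=\mathbf{a'}=\mathbf{0}$, $u=u'=1$, $v_0=1$, $w_0=w'_0=1$ and $\boldsymbol{\omega_0}=\boldsymbol{\omega'_0}=\mathbf{0}$, so the linear variety $N$ of~\eqref{eq:variedad lineal de Z^m} collapses to the \emph{subgroup} $L+L'\leqslant\ZZ^m$, and Lemma~\ref{th:projeccio interseccio de cosets de Fn x Zm} specialises to $(H\cap H')\pi=M\rho_3^{-1}$, where now $M=(\mathbf{PA-P'A'})^{-1}(L+L')$ is the full preimage of the subgroup $L+L'$ under the integral linear map $\mathbf{PA-P'A'}\colon\ZZ^{n_3}\to\ZZ^m$; in particular $M$ is a subgroup of $\ZZ^{n_3}$ that we can compute explicitly by solving a system of linear equations over $\ZZ$. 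Here $n_3=\rank(H\pi\cap H'\pi)$, the free group $H\pi\cap H'\pi\isom F_{n_3}$ carries the free basis $\{v_1,\ldots,v_{n_3}\}$ produced in that proof, and $\rho_3$ is its abelianisation.

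Next I would decide finite generation. By Corollary~\ref{cor:H fg sii Hpi fg}, $H\cap H'$ is finitely generated if and only if $(H\cap H')\pi=M\rho_3^{-1}$ is. The crucial point is that $M\rho_3^{-1}$ contains $\ker\rho_3$, i.e.\ the commutator subgroup of $H\pi\cap H'\pi\isom F_{n_3}$, so $M\rho_3^{-1}$ is a \emph{normal} subgroup of $H\pi\cap H'\pi$. If $n_3\leqslant1$ then $H\pi\cap H'\pi$ is trivial or infinite cyclic and $M\rho_3^{-1}$ is automatically finitely generated. If $n_3\geqslant2$ then $M\rho_3^{-1}$ is non-trivial (it already contains the non-trivial commutator subgroup), so by the classical fact that a non-trivial finitely generated normal subgroup of a non-abelian free group has finite index, $M\rho_3^{-1}$ is finitely generated if and only if $[\,H\pi\cap H'\pi : M\rho_3^{-1}\,]=[\ZZ^{n_3}:M]<\infty$, i.e.\ if and only if $\rank(M)=n_3$. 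This last condition is decidable by linear algebra over $\ZZ$, which settles the decision part of $\SIP(\ZZ^m\times F_n)$.

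Suppose now that $H\cap H'$ is finitely generated; I would output a basis of it, assembled from its abelian and free parts as in the decomposition~\eqref{eq:factoritzacio subgrup}. The abelian part is $(H\cap H')\cap\ZZ^m=(H\cap\ZZ^m)\cap(H'\cap\ZZ^m)$, whose abelian basis is obtained from a $\ZZ$-basis of $L\cap L'$ by standard linear algebra. For the free part: since $[\ZZ^{n_3}:M]<\infty$ (or $n_3\leqslant1$), the Schreier graph of $M\rho_3^{-1}$ in $H\pi\cap H'\pi$ is finite and effectively constructible --- its vertices are the finitely many cosets of $M$ in $\ZZ^{n_3}$, with $v_i$ acting by translation by the $i$-th unit vector --- so from it we read off a free basis $\{\omega_1,\ldots,\omega_r\}$ of $M\rho_3^{-1}$ in the usual way; translating through the already-known expressions $v_i=\nu_i(u_1,\ldots,u_{n_1})$, each $\omega_j$ becomes an element $w_j\in F_n$ lying in $(H\cap H')\pi$. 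For every $j$ I then compute $\mathbf{a_j}\in\ZZ^m$ with $\mathbf{t}^{\mathbf{a_j}}w_j\in H\cap H'$ by intersecting the two affine varieties $\cab_{w_j,H}$ and $\cab_{w_j,H'}$ of Corollary~\ref{cor:propietats complecio abeliana} (they are effectively computable, and their intersection is non-empty precisely because $w_j\in(H\cap H')\pi$) and picking any point. Taking $\alpha\colon\omega_j\mapsto\mathbf{t}^{\mathbf{a_j}}w_j$ as a splitting, Corollary~\ref{cor:caracteritzacio base combinatoria} guarantees that the abelian basis of $(H\cap H')\cap\ZZ^m$ together with $\{\mathbf{t}^{\mathbf{a_1}}w_1,\ldots,\mathbf{t}^{\mathbf{a_r}}w_r\}$ is a basis, hence in particular a finite generating set, of $H\cap H'$.

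The step I expect to be the main obstacle is the second one: recognising that $(H\cap H')\pi$ is always a normal subgroup of the (computable) free group $H\pi\cap H'\pi$, and that, via the dichotomy for finitely generated normal subgroups of free groups, the question ``is $H\cap H'$ finitely generated?'' is equivalent to the purely linear-algebraic condition $\rank(M)=n_3$, once the degenerate cases $n_3\leqslant1$ are separated off. Everything else then reduces to integral linear algebra, a finite Schreier-graph computation, and the already-established effective computation of abelian completions.
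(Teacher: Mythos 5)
Your proposal is correct and follows essentially the same route as the paper: specialise the coset-intersection algorithm to $g=g'=1$, observe that $(H\cap H')\pi=M\rho_3^{-1}$ is the full $\rho_3$-preimage of a computable subgroup $M\leqslant\ZZ^{n_3}$ and hence a normal subgroup of $H\pi\cap H'\pi$, apply the trivial-or-finite-index dichotomy for finitely generated normal subgroups of free groups to reduce the decision to $\rank(M)=n_3$, and assemble a basis from $L\cap L'$, a Schreier-graph free basis of $M\rho_3^{-1}$, and abelian completions (this is exactly the paper's Lemma~\ref{int-fg} plus its first basis-computation alternative). The only nit is the degenerate case $n_3=1$, $M=\{\mathbf{0}\}$, where the coset graph of $M$ in $\ZZ^{n_3}$ is infinite rather than finite, but there $(H\cap H')\pi$ is trivial and the free part of the basis is simply empty, as the paper also notes separately.
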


\begin{proof}
Let $G=\ZZ^m \times F_n$ be a finitely generated free-abelian times free group. As in the
proof of Theorem~\ref{thm:cip}, we can assume that the initial finitely generated subgroups
$H, H'\leqslant G$ are given by respective bases, i.e.\ by two sets of elements like
in~(\ref{ee'}), $E=\{ \mathbf{t}^\mathbf{b_1}, \ldots ,\mathbf{t}^\mathbf{b_{m_1}},
\mathbf{t}^\mathbf{a_1} u_1,\ldots , \mathbf{t}^\mathbf{a_{n_1}} u_{n_1}\}$ and $E'=\{
\mathbf{t}^\mathbf{b'_1}, \ldots ,\mathbf{t}^\mathbf{b'_{m_2}}, \mathbf{t}^\mathbf{a'_1}
u'_1, \ldots ,\mathbf{t}^\mathbf{a'_{n_2}}u'_{n_2}\}$. Consider the subgroups $L,
L'\leqslant \ZZ^m$ and the matrices $\mathbf{A}\in \mathcal{M}_{n_1 \times m}(\ZZ)$ and
$\mathbf{A'}\in \mathcal{M}_{n_2 \times m}(\ZZ)$ as above. We shall algorithmically decide
whether the intersection $H\cap H'$ is finitely generated or not and, in the affirmative
case, shall compute a basis for $H\cap H'$.

Let us apply the algorithm from the proof of Theorem~\ref{thm:cip} to the cosets $1\cdot H$
and $1\cdot H'$; that is, take $g=g'=1$, i.e.\ $u=u'=1$ and
$\mathbf{a}=\mathbf{a'}=\mathbf{0}$. Of course, $H\cap H'$ is not empty, and $v_0=1$ serves
as an element in the intersection, $v_0\in H\cap H'$. With this choice, the algorithm works
with $w_0=w'_0=1$ and $\boldsymbol{\omega_0}=\boldsymbol{\omega'_0}=\mathbf{0}$ (so, we can
forget the two translation parts in diagram~\eqref{xypic:esquema interseccio cosets Fn x
Z^m}). Lemma~\ref{th:projeccio interseccio de cosets de Fn x Zm} tells us that $(H\cap
H')\pi =M\rho_3^{-1}\leqslant H\pi \cap H'\pi$, where $M$ is the preimage by the linear
mapping $\mathbf{PA-P'A'}\colon \ZZ^{n_3}\to \ZZ^{n_1}$ of the subspace $N=L+L'\leqslant
\ZZ^m$. In this situation, the following lemma decides when is $H\cap H'$ finitely
generated and when is not:

\begin{lem}\label{int-fg}
With the current notation, the following are equivalent:
\begin{itemize}
\item[\emph{(a)}] $H\cap H'$ is finitely generated,
\item[\emph{(b)}] $(H\cap H')\pi$ is finitely generated,
\item[\emph{(c)}] $M\rho_3^{-1}$ is either trivial or of finite index in $H\pi \cap
    H'\pi$,
\item[\emph{(d)}] either $n_3=1$ and $M=\{ \mathbf{0}\}$, or $M$ is of finite index in
    $\ZZ^{n_3}$,
\item[\emph{(e)}] either $n_3=1$ and $M=\{ \mathbf{0}\}$, or $\rank(M)=n_3$.
\end{itemize}
\end{lem}

\begin{proof}
(a) $\Leftrightarrow$ (b) is in Corollary~\ref{cor:H fg sii Hpi fg}. (b) $\Leftrightarrow$
(c) comes from the well known fact (see, for example, \cite{lyndon_combinatorial_2001}
pags. 16-18) that, in the finitely generated free group $H\pi \cap H'\pi$, the subgroup
$(H\cap H')\pi =M\rho_3^{-1}$ is normal and so, finitely generated if and only if it is
either trivial or of finite index. But, by lemma~\ref{lem:index de subgrups a traves de
epimorfismes}~(ii), the index $[H\pi \cap H'\pi : M\rho_3^{-1}]$ is finite if and only if
$[\ZZ^{n_3} : M]$ is finite; this gives (c) $\Leftrightarrow$ (d). The last equivalence is
a basic fact in linear algebra.
\end{proof}

We have computed $n_3$ and an abelian basis for $M$. If $n_3=0$ we immediately deduce that
$H\cap H'$ is finitely generated. If $n_3=1$ and $M=\{ \mathbf{0}\}$ we also deduce that
$H\cap H'$ is finitely generated. Otherwise, we check whether $\rank(M)$ equals $n_3$; if
this is the case then again $H\cap H'$ is finitely generated; if not, $H\cap H'$ is
infinitely generated.

It only remains to algorithmically compute a basis for $H\cap H'$, in case it is finitely
generated. We know from~\eqref{eq:factoritzacio subgrup} that
 $$
H\cap H'=\bigl((H\cap H')\cap \ZZ^m \bigr)\, \times \, (H\cap H')\pi \alpha,
 $$
where $\alpha$ is any splitting for $\pi_{\mid H\cap H'} \colon H\cap H'\twoheadrightarrow
(H\cap H')\pi$; then we can easily get a basis of $H\cap H'$ by putting together a basis of
each part. The strategy will be the following: first, we compute an abelian basis for
 $$
(H\cap H')\cap \ZZ^m =(H\cap \ZZ^m)\cap (H'\cap \ZZ^m)=L\cap L'
 $$
by just solving a system of linear equations. Second, we shall compute a free basis for
$(H\cap H')\pi$. And finally, we will construct an explicit splitting $\alpha$ and will use
it to get a free basis for $(H\cap H')\pi \alpha$. Putting together these two parts, we
shall be done.

To compute a free basis for $(H\cap H')\pi$ note that, if $n_3=0$, or $n_3=1$ and $M=\{
\mathbf{0}\}$, then $(H\cap H')\pi =1$ and there is nothing to do. In the remaining case,
$\rank(M)=n_3\geqslant 1$, $M\rho_3^{-1}=(H\cap H')\pi$ has finite index in $H\pi \cap
H'\pi$, and so it is finitely generated. We give two alternative options to compute a free
basis for it.

The subgroup $M$ has finite index in $\ZZ^{n_3}$, and we can compute a system of coset
representatives of $\ZZ^{n_3}$ modulo $M$,
 $$
\ZZ^{n_3}=M\mathbf{c_1}\sqcup \cdots \sqcup M\mathbf{c_d}
 $$
(see the beginning of Section~\ref{fi}). Now, being $\rho_3$ onto, and according to
Lemma~\ref{lem:index de subgrups a traves de epimorfismes}~(b), we can transfer the
previous partition via $\rho_3$ to obtain a system of right coset representatives of $H\pi
\cap H'\pi$ modulo $M\rho_3^{-1}$:
\begin{equation}\label{eq:classes modul M rho_3^(-1)}
H\pi \cap H'\pi =(M\rho_3^{-1})z_1\sqcup \cdots \sqcup (M\rho_3^{-1})z_d,
\end{equation}
where we can take, for example, $z_i =v_1^{c_{i,1}} v_2^{c_{i,2}}\cdots\,
v_{n_3}^{c_{i,n_3}}\in H\pi \cap H'\pi$, for each vector $\mathbf{c_i}=(c_{i,1},
c_{i,2},\ldots, c_{i,n_3}) \in \ZZ^{n_3}$, $i\in [d]$. Now let us construct the core of the
Schreier graph for $M\rho_3^{-1}=(H\cap H')\pi$ (with respect to $\{v_1, \ldots
,v_{n_3}\}$, a free basis for $H\pi \cap H'\pi$), $\mathcal{S}(M\rho_3^{-1})$, in the
following way: consider the graph with the cosets of~\eqref{eq:classes modul M rho_3^(-1)}
as vertices, and with no edge. Then, for every vertex $(M\rho_3^{-1})z_i$ and every letter
$v_j$, add an edge labeled $v_j$ from $(M\rho_3^{-1})z_i$ to $(M \rho_3^{-1})z_i v_j$,
algorithmically identified among the available vertices by repeatedly using the
\emph{membership problem} for $M\rho_3^{-1}$ (note that we can do this by abelianizing the
candidate and checking the defining equations for $M$). Once we have run over all $i,j$, we
shall get the full graph $\mathcal{S}(M\rho_3^{-1})$, from which we can easily obtain a
free basis for~$(H\cap H')\pi$ in terms of~$\{v_1, \ldots ,v_{n_3}\}$.

Alternatively, let $\{ \mathbf{m_1}, \ldots ,\mathbf{m_{n_3}}\}$ be an abelian basis for
$M$ (which we already have from the previous construction), say $\mathbf{m_i}=(m_{i,1},
m_{i,2},\ldots, m_{i,n_3}) \in \ZZ^{n_3}$, $i=1,\ldots ,n_3$, and consider the elements
$x_i =v_1^{m_{i,1}} v_2^{m_{i,2}}\cdots\, v_{n_3}^{m_{i,n_3}}\in H\pi \cap H'\pi$. It is
clear that $M\rho_3^{-1}$ is the subgroup of $H\pi \cap H'\pi$ generated by $x_1, \ldots
,x_{n_3}$ and all the infinitely many commutators from elements in $H\pi \cap H'\pi$. But
$M\rho_3^{-1}$ is finitely generated so, finitely many of those commutators will be enough.
Enumerate all of them, $y_1, y_2, \ldots$ and keep computing the core $\mathcal{S}_j$ of
the Schreier graph for the subgroup $\langle x_1, \ldots , x_{n_3}, y_1, \ldots
,y_j\rangle$ for increasing $j$'s until obtaining a complete graph with $d$ vertices (i.e.\
until reaching a subgroup of index $d$). When this happens, we shall have computed the core
of the Schreier graph for $M\rho_3^{-1}=(H\cap H')\pi$ (with respect to $\{v_1, \ldots
,v_{n_3}\}$, a free basis of $H\pi \cap H'\pi$), from which we can easily find a free basis
for $(H\cap H')\pi$, in terms of~$\{v_1, \ldots ,v_{n_3}\}$.

Finally, it remains to compute an explicit splitting $\alpha$ for $\pi_{\mid H\cap H'}
\colon H\cap H'\twoheadrightarrow (H\cap H')\pi$. We have a free basis $\{ z_1,\ldots ,z_d
\}$ for $(H\cap H')\pi$, in terms of $\{ v_1,\ldots ,v_{n_3}\}$; so, using the expressions
$v_i =\nu_i (u_1,\ldots, u_{n_1})$ that we have from the beginning of the proof, we can get
expressions $z_i =\eta_i (u_1,\ldots ,u_{n_1})$. From here, $\eta_i
(\mathbf{t^{a_1}}u_1,\ldots ,\mathbf{t^{a_{n_1}}}u_{n_1}) =\mathbf{t^{e_i}} z_i \in H$ and
projects to $z_i$, so $\mathcal{C}_{z_i,\, H}=\mathbf{e_i}+L$ (see
Corollary~\ref{cor:propietats complecio abeliana}), $i\in [d]$. Similarly, we can get
vectors $\mathbf{e'_i}\in \mathbb{Z}^m$ such that $\mathcal{C}_{z_i,\,
H'}=\mathbf{e'_i}+L'$. Since, by construction, $\mathcal{C}_{z_i, H\cap H'}
=\mathcal{C}_{z_i, H}\cap \mathcal{C}_{z_i, H'}$ is a non-empty affine variety in
$\mathbb{Z}^m$ with direction $L\cap L'$, we can compute vectors $\mathbf{e''_i}\in
\mathbb{Z}^m$ on it by just solving the corresponding systems of linear equations, $i\in
[d]$. Now, $z_i \mapsto \mathbf{t^{e''_i}}z_i$ is the desired splitting $H\cap
H'\stackrel{\alpha}{\leftarrow} (H\cap H')\pi$, and $\{ \mathbf{t^{e''_1}}z_1,\, \ldots
,\mathbf{t^{e''_d}}z_d \}$ is the free basis for $(H\cap H')\pi\alpha$ we were looking for.

As mentioned above, putting together this free basis with the abelian basis we already have
for $L\cap L'$, we get a basis for $H\cap H'$, concluding the proof.
\end{proof}

\begin{cor} \label{cor:interseccio subgrups lliures no abelians de rang finit}
Let $H,H'$ be two free non-abelian subgroups of finite rank in $\ZZ^m \times F_n $. With
the previous notation, the intersection $H\cap H'$ is finitely generated if and only if
either $H\cap H'=1$, or~${\mathbf{PA}=\mathbf{P'A'}}$.
\end{cor}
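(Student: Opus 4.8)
The plan is to run the machinery from the proof of Theorem~\ref{thm:ip} in the degenerate situation forced by the hypotheses. First I would record that, since $H$ and $H'$ are free non-abelian, they have trivial center; as $H\cap\ZZ^m$ lies in the center $\ZZ^m$ of $G$ it is central in $H$, hence $H\cap\ZZ^m\leqslant Z(H)=1$, and likewise $H'\cap\ZZ^m=1$. In the notation of~\eqref{ee'} this forces $m_1=m_2=0$ and $L=L'=\{\mathbf{0}\}$, so the linear variety $N$ of Theorem~\ref{thm:ip} collapses to $N=L+L'=\{\mathbf{0}\}$ and consequently $M$ is exactly the kernel of the homomorphism $\phi=\mathbf{PA}-\mathbf{P'A'}\colon\ZZ^{n_3}\to\ZZ^m$.

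Next I would feed this into Lemma~\ref{int-fg}. The ``if'' direction is immediate: if $H\cap H'=1$ there is nothing to prove, and if $\mathbf{PA}=\mathbf{P'A'}$ then $M=\ker\phi=\ZZ^{n_3}$, so $\rank(M)=n_3$ and Lemma~\ref{int-fg}(e) yields that $H\cap H'$ is finitely generated (the case $n_3=0$, where $\mathbf{PA}=\mathbf{P'A'}$ holds vacuously, also falls here). For the ``only if'' direction, suppose $H\cap H'$ is finitely generated but $\mathbf{PA}\neq\mathbf{P'A'}$; I must conclude $H\cap H'=1$. Since $\phi\neq\mathbf{0}$ we have $\rank(\im\phi)\geqslant 1$, whence $\rank(M)=\rank(\ker\phi)=n_3-\rank(\im\phi)<n_3$; by Lemma~\ref{int-fg}(e) the only option compatible with $H\cap H'$ being finitely generated is then $n_3=1$ and $M=\{\mathbf{0}\}$.

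Finally I would push this back through the projection $\pi$. With $n_3=1$ the free group $H\pi\cap H'\pi$ is infinite cyclic, so the abelianization $\rho_3$ is an isomorphism and $M\rho_3^{-1}=\{1\}$; by Lemma~\ref{th:projeccio interseccio de cosets de Fn x Zm} applied with $g=g'=1$ this says $(H\cap H')\pi=\{1\}$, i.e.\ $H\cap H'\leqslant\ker\pi=\ZZ^m$, and since $H\cap H'\leqslant H$ with $H\cap\ZZ^m=1$ we conclude $H\cap H'=1$, as wanted. I do not anticipate a genuine obstacle: the statement is essentially a repackaging of Lemmas~\ref{int-fg} and~\ref{th:projeccio interseccio de cosets de Fn x Zm} once one observes $L=L'=\{\mathbf{0}\}$. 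The only points deserving a line of care are the degenerate values $n_3\leqslant 1$ (including the vacuous equality of empty matrices when $n_3=0$) and the last implication $(H\cap H')\pi=\{1\}\Rightarrow H\cap H'=1$, which is precisely where non-abelianness of $H$ is genuinely used.
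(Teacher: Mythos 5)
Your proof is correct and follows essentially the same route as the paper's: reduce to $L=L'=\{\mathbf{0}\}$, identify $M$ with $\ker(\mathbf{PA}-\mathbf{P'A'})$, and read off the conclusion from Lemma~\ref{int-fg}. You in fact supply details the paper leaves implicit, namely the center argument showing $H\cap\ZZ^m=H'\cap\ZZ^m=1$ and the careful treatment of the degenerate cases $n_3\leqslant 1$ together with the final implication $(H\cap H')\pi=1\Rightarrow H\cap H'=1$.
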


\begin{proof}
Under the conditions of the statement, we have $L=L'=\{ \mathbf{0}\}$. Hence, $N=L+L'=\{
\mathbf{0}\}$ and its preimage by $\mathbf{PA-P'A'}$ is $M=\ker(\mathbf{PA-P'A'})\leqslant
\ZZ^{n_3}$. Now, by Lemma~\ref{int-fg}, $H\cap H'$ is finitely generated if and only if
either $(H\cap H')\pi =M\rho_3^{-1}=1$, or $n_3 -\rank
(Im(\mathbf{PA-P'A'}))=\rank(M)=n_3$; that is, if and only if either $(H\cap H')\pi =1$, or
$\mathbf{PA}=\mathbf{P'A'}$. But, since $L=L'=\{ \mathbf{0}\}$, $(H\cap H')\pi =1$ if and
only if $H\cap H'=1$.
\end{proof}

\goodbreak

We consider the following two examples to illustrate the preceding algorithm.

\begin{exm}
Let us analyze again the example given in the proof of Observation~\ref{prop:Fn x Z^n no
Howson}, under the light of the previous corollary. We considered in $\ZZ \times
F_2=\langle t \mid \, \rangle \times \langle a,b \mid \, \rangle$ the subgroups $H=\langle
a,b \rangle$ and $H'=\langle ta,b\rangle$, both free non-abelian of rank 2. It is clear
that $\mathbf{A}=\left(\begin{smallmatrix} 0 \\ 0 \end{smallmatrix}\right)$ and
$\mathbf{A'}=\left(\begin{smallmatrix} 1 \\ 0 \end{smallmatrix}\right)$, while $H\pi =H'\pi
=H\pi \cap H'\pi =F_2$; in particular, $n_3=2$ and $H\cap H'\neq 1$. In these
circumstances, both inclusions $H\pi \overset{}{\hookleftarrow} H\pi \cap H'\pi
\overset{}{\hookrightarrow} H'\pi$ are the identity maps, so
$\mathbf{P}=\mathbf{P'}=\mathbf{1}$ is the $2\times 2$ identity matrix and hence,
$\mathbf{PA}= \left(\begin{smallmatrix} 0 \\ 0 \end{smallmatrix}\right) \neq
\left(\begin{smallmatrix} 1 \\ 0 \end{smallmatrix}\right) =\mathbf{P'A'}$. According to
Corollary~\ref{cor:interseccio subgrups lliures no abelians de rang finit}, this means that
$H\cap H'$ is not finitely generated, as we had seen before.
\end{exm}

\begin{exm}
Consider two finitely generated subgroups $H, H'\leqslant F_n \leqslant \ZZ^m \times F_n$.
In this case we have $\mathbf{A}=(\mathbf{0})\in \mathcal{M}_{n_1,m}$ and
$\mathbf{A'}=(\mathbf{0})\in \mathcal{M}_{n_2,m}$ and so,
$\mathbf{PA}=(\mathbf{0})=\mathbf{P'A'}$. Thus, Corollary~\ref{cor:interseccio subgrups
lliures no abelians de rang finit} just corroborates Howson's property for finitely
generated free groups.
\end{exm}

\goodbreak

To finish this section, we present an application of Theorem~\ref{thm:ip} to a nice
geometric problem. In the very recent paper~\cite{sahattchieve_quasiconvex_2011}, J. Sahattchieve studies
quasi-convexity of subgroups of $\ZZ^m \times F_n$ with respect to the natural
component-wise action of $\ZZ^m \times F_n$ on the product space, $\mathbb{R}^m \times
T_n$, of the $m$-dimensional euclidean space and the regular $(2n)$-valent infinite tree
$T_n$: a subgroup $H\leqslant \ZZ^m \times F_n$ is \emph{quasi-convex} if the orbit $Hp$ of
some (and hence every) point $p\in \mathbb{R}^m \times T_n$ is a quasi-convex subset of
$\mathbb{R}^m \times T_n$ (see~\cite{sahattchieve_quasiconvex_2011} for more details). One of the results obtained is
the following characterization:

\begin{thm}[Sahattchieve]\label{thmSahattchieve}
Let $H$ be a subgroup of $\ZZ^m \times F_n$. Then, $H$ is quasi-convex if and only if $H$
is either cyclic or virtually of the form $A\times B$, for some $A\leqslant \ZZ^m$ and
$B\leqslant F_n$ being finitely generated. (In particular, quasi-convex subgroups are
finitely generated.)
\end{thm}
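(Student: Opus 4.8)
The plan is to translate quasi-convexity of $H$ into a combinatorial property of its basis data, using the geometry of the given action of $G=\ZZ^m\times F_n$ on $X=\RR^m\times T_n$. Since $\ZZ^m$ acts on $\RR^m$ and $F_n$ on $T_n$ (the Cayley graph of $F_n$) properly and cocompactly by isometries, $G$ acts geometrically on the CAT(0) space $X$ (a Euclidean space times a tree, with the $\ell^2$ metric), so $H$ is quasi-convex exactly when some, equivalently every, orbit $Hp$ lies in a uniform neighbourhood of every geodesic joining two of its points. Taking $p=(\mathbf 0,1)$ and assuming $H$ finitely generated (necessary, by the first step below), Lemma~\ref{lem:descripcio d'un subgrup fg en termes d'una base} describes the orbit as $Hp=\{(\mathbf a,w)\mid w\in H\pi,\ \mathbf a\in\boldsymbol\omega\mathbf A+L\}$, with $L=H\cap\ZZ^m$ and $\mathbf A$ the twist matrix of a chosen basis of $H$. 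So quasi-convexity becomes a statement about how the affine fibres $\boldsymbol\omega\mathbf A+L$ over the vertices of $H\pi$ vary along the tree.

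For the implication ``$H$ is cyclic or virtually of the form $A\times B$ $\Rightarrow$ $H$ is quasi-convex'' I would proceed as follows. If $H=\langle\mathbf t^{\mathbf a}w\rangle$ is cyclic then $Hp=\{(k\mathbf a,w^k)\mid k\in\ZZ\}$ stays within bounded distance of a geodesic line of $X$ (a line of $\RR^m$ if $w=1$; otherwise $w$ is hyperbolic on $T_n$ and $\mathbf t^{\mathbf a}w$ acts on $X$ as a hyperbolic isometry whose axis the orbit fellow-travels), and a bounded neighbourhood of a convex set is quasi-convex. Quasi-convexity is also unaffected by passing to a finite-index over- or sub-group: replacing the claimed product subgroup by its normal core $H_0\trianglelefteq H$ and writing each $h\in H$ as $h_0'h_i$ with $h_i$ a coset representative, one gets $d(hp,H_0p)\leqslant d(h_ip,p)$ bounded, so $Hp$ and $H_0p$ are at bounded Hausdorff distance; and in a CAT(0) space orbits at bounded Hausdorff distance are simultaneously quasi-convex, since geodesics with uniformly close endpoints stay uniformly close. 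It therefore suffices to treat $H=A\times B$ with $A\leqslant\ZZ^m$ and $B\leqslant F_n$ finitely generated, where $Hp$ is the product $(Ap_1)\times(Bp_2)$ of the orbit of a finitely generated subgroup of $\ZZ^m$ in $\RR^m$ (a coset of a lattice in a linear subspace, hence quasi-convex) and the orbit of a finitely generated subgroup of $F_n$ in $T_n$ (quasi-convex, as any finitely generated subgroup of a free group is, via the finite Stallings core graph). A product of quasi-convex subsets is quasi-convex in an $\ell^2$ product of geodesic spaces, because a geodesic of the product is the diagonal of its coordinate geodesics, so each coordinate stays close to the corresponding factor.

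For the converse, which carries the real content, I would first invoke the standard fact that a quasi-convex subgroup of a group acting geometrically on a CAT(0) space is finitely generated (and undistorted); this gives the parenthetical assertion and justifies the orbit description above. Then I argue by contraposition: suppose $H$ is finitely generated, not cyclic, and not virtually of the form $A\times B$, and let me exhibit two orbit points whose connecting geodesic escapes every neighbourhood of $Hp$. In terms of the basis data, ``not virtually a product'' means the twist cocycle $c\colon H\pi\to\ZZ^m/L$, $\omega(u_1,\dots,u_{n'})\mapsto\boldsymbol\omega\mathbf A+L$, has infinite image: were its image finite, $\ker c$ would have finite index in $H\pi$ and pull back to a finite-index subgroup $L\times\ker c$ of $H$ of the required form. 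Postcomposing $c$ with a suitable integral functional yields a homomorphism $\chi\colon H\pi\to\ZZ$ with infinite cyclic image and a linear form $f$ on $\ZZ^m$ with $f(\mathbf a)=\chi(w)$ whenever $\mathbf t^{\mathbf a}w\in H$; hence the $\RR^m$-coordinate of an orbit point determines $\chi$ of its tree coordinate. Since $H$ is not cyclic, $H\pi$ is not infinite cyclic (when it is, all of $H$ is abelian, which lands on the right-hand side of the statement, an abelian group acting cocompactly on a flat being quasi-convex), so $H\pi$ is a finitely generated non-abelian free group on which $\chi$ has infinite image; its finite core graph then contains arbitrarily long geodesic segments along which $\chi$ grows linearly and which can be completed, inside $H\pi$, to geodesics joining pairs of orbit points whose endpoint twists pin $\chi$ to one value. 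Along such a geodesic the tree coordinate runs through the interior of the segment while the $\RR^m$-coordinate interpolates between the two equal endpoint values of $\chi$; by $f$, the twist forced at the middle disagrees with this interpolated value by an unbounded amount, pushing the geodesic point arbitrarily far from $Hp$. Hence $H$ is not quasi-convex.

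The main obstacle is exactly this last geometric step: showing uniformly that ``the twist cocycle has infinite image and $H$ is non-cyclic'' forces long core-graph geodesics on which the twist is genuinely non-affine while some pair of orbit points constrains it at both ends, and correctly sorting out the borderline cases (above all $H\pi$ infinite cyclic, where the twist is automatically coarsely affine and $H$ is abelian hence quasi-convex) so that they fall on the ``cyclic or virtual product'' side of the dichotomy. Granting this twist dichotomy, the rest is bookkeeping with the explicit orbit description of Lemma~\ref{lem:descripcio d'un subgrup fg en termes d'una base} and with standard facts about CAT(0) geometry and Stallings graphs.
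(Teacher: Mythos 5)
A preliminary remark: the paper does not prove this statement at all. It is imported from Sahattchieve's work \cite{sahattchieve_quasiconvex_2011} and used purely as a black box in the corollary that follows, so there is no internal proof to compare yours with; your argument has to stand on its own, and as written it does not.

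The decisive gap is in the converse direction and, more fundamentally, in the set-up. You explicitly defer ``the main obstacle'' --- the uniform geometric argument that an infinite-image twist cocycle on a non-abelian $H\pi$ forces geodesics to escape every neighbourhood of $Hp$ --- and that step is precisely the content of the theorem, so what you have is a plan rather than a proof. Worse, your treatment of the borderline case is incorrect, and the error exposes a mismatch between your chosen metric and the one the theorem requires. Consider $H=\langle t,\, sa\rangle\leqslant\ZZ^2\times F_2=\langle s,t\mid [s,t]\rangle\times\langle a,b\mid\ \rangle$. This $H$ is free-abelian of rank $2$, hence not cyclic; moreover $H\cap F_2=1$ (the abelian completion of $a^j$ in $H$ is $j(1,0)+\langle (0,1)\rangle$, which misses $\mathbf{0}$ for $j\neq 0$) and $H\cap\ZZ^2=\langle t\rangle$ has infinite index in $H$, so no finite-index subgroup of $H$ can equal $A\times B$ with $A\leqslant\ZZ^2$ and $B\leqslant F_2$: any such $B$ would have to be trivial and any such $A$ would lie in $\langle t\rangle$. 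Thus $H$ satisfies neither alternative on the right-hand side. Yet in the $\ell^2$ product metric you adopt, where geodesics are unique and ``diagonal'', the orbit $Hp=\{\,((j,k),\,a^j x_0)\mid j,k\in\ZZ\,\}$ lies within bounded Hausdorff distance of a convex $2$-flat inside $\RR^2\times\operatorname{axis}(a)$ and is therefore quasi-convex. So under your reading the theorem is false, and your parenthetical claim that the abelian non-cyclic case ``lands on the right-hand side of the statement'' is wrong. The notion of quasi-convexity for which the theorem can hold must see geodesics other than the $\ell^2$ diagonals (for instance, paths performing the tree motion and the $\RR^m$ motion in sequence, as in the $\ell^1$ or word metric, which do stray linearly far from this orbit); but once such geodesics are admitted, your forward direction --- which silently relies on uniqueness of geodesics in the product --- also has to be reargued. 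Until the metric and the class of admissible geodesics are pinned down to match \cite{sahattchieve_quasiconvex_2011}, neither implication is established.
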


Combining this with our Theorem~\ref{thm:ip}, we can easily establish an algorithm to
decide whether a given finitely generated subgroup of $\ZZ^m \times F_n$ is quasi-convex or
not (with respect to the above mentioned action).

\begin{cor}
There is an algorithm which, given a finite list $w_1,\ldots,w_s$ of elements in $\ZZ^m
\times F_n$, decides whether the subgroup $H=\langle w_1,\ldots,w_s \rangle$ is
quasi-convex or not.
\end{cor}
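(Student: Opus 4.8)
The plan is to reduce everything to Sahattchieve's characterization (Theorem~\ref{thmSahattchieve}): $H$ is quasi-convex if and only if $H$ is cyclic, or $H$ has a finite-index subgroup of the form $A\times B$ with $A\leqslant \ZZ^m$ and $B\leqslant F_n$ (finitely generated). So it is enough to make each of these two alternatives algorithmically checkable. First I would apply the algorithm of Proposition~\ref{prop:bases algorismiques} to compute a basis $\{\mathbf{t}^{\mathbf{b_1}},\ldots,\mathbf{t}^{\mathbf{b_{m'}}},\mathbf{t}^{\mathbf{a_1}}u_1,\ldots,\mathbf{t}^{\mathbf{a_{n'}}}u_{n'}\}$ of $H=\langle w_1,\ldots,w_s\rangle$; this immediately gives $\rank(H)$, an abelian basis of $H\cap\ZZ^m$, and a free basis of $H\pi$. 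The cyclicity test is then trivial: $H$ is cyclic exactly when $\rank(H)\in\{(0,0),(1,0)\}$, and in that case we answer ``quasi-convex''.

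The substantial alternative rests on a purely group-theoretic observation. If $A\leqslant\ZZ^m$, $B\leqslant F_n$ and $A\times B\leqslant H$, then $A\leqslant H\cap\ZZ^m$ and $B\leqslant H\cap F_n$ (each factor lies in $H$ and in the corresponding coordinate subgroup), hence $A\times B\leqslant K:=(H\cap\ZZ^m)\times(H\cap F_n)\leqslant H$; conversely $K$ is itself of this shape, and if $[H:K]<\infty$ its second factor is finitely generated, being a direct factor of the finitely generated group $K$. Thus $H$ is virtually of the required form if and only if $[H:K]<\infty$. Now this is decidable: apply the solution of $\SIP(\ZZ^m\times F_n)$ (Theorem~\ref{thm:ip}) to $H$ and to the coordinate subgroup $F_n=\langle x_1,\ldots,x_n\rangle$. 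If it reports that $H\cap F_n$ is not finitely generated, then $K$ is not finitely generated, so $[H:K]=\infty$ (as $H$ is finitely generated) and we answer ``not quasi-convex''. Otherwise $\SIP$ returns a finite generating set of $H\cap F_n$, which together with $\mathbf{t}^{\mathbf{b_1}},\ldots,\mathbf{t}^{\mathbf{b_{m'}}}$ is a finite generating set of $K\leqslant H$.

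It remains to decide whether this $K$ has finite index in $H$. Here one uses that $H$ is itself a finitely generated free-abelian times free group: via the basis above we identify $H$ with $\ZZ^{m'}\times F_{n'}$ endowed with its standard presentation, rewrite the generators of $K$ in that presentation (e.g.\ by running the membership algorithm of Proposition~\ref{lem:Membership problem per Fn x Z^m} inside $H$), and apply $\FIP(\ZZ^{m'}\times F_{n'})$ (Theorem~\ref{th:problema de index finit}). By the equivalence above and Sahattchieve's theorem we answer ``quasi-convex'' if $[H:K]<\infty$ and ``not quasi-convex'' otherwise; the latter is correct since $H$ has already been checked non-cyclic.

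The only step needing any genuine argument, rather than bookkeeping, is the middle paragraph: identifying $(H\cap\ZZ^m)\times(H\cap F_n)$ as the largest product subgroup of $H$ compatible with the ambient splitting, and noting that already the failure of $H\cap F_n$ to be finitely generated precludes a virtual $A\times B$ structure — this is precisely why $\SIP$ (to detect that failure, and otherwise to produce generators) is needed on top of $\FIP$. Everything else is routine translation between bases and presentations.
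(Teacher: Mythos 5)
Your proposal is correct and follows essentially the same route as the paper: reduce to Sahattchieve's characterization, identify the canonical candidate subgroup $K=(H\cap\ZZ^m)\times(H\cap F_n)$, use $\SIP(\ZZ^m\times F_n)$ to compute $H\cap F_n$ or detect that it is infinitely generated, and finish with a finite-index test. The only (cosmetic) divergence is in that last step: the paper checks the equivalent pair of conditions $H\cap\ZZ^m\leqslant\fin H\tau$ and $H\cap F_n\leqslant\fin H\pi$ in the two projections, so it only needs $\FIP(\ZZ^m)$ and $\FIP(F_{n'})$, whereas you test $[H:K]<\infty$ directly by running $\FIP$ inside $H$ viewed as $\ZZ^{m'}\times F_{n'}$; the two criteria agree by Lemma~\ref{lem:index producte directe} applied to $K\leqslant H\leqslant H\tau\times H\pi$.
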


\begin{proof}
First, apply Proposition~\ref{prop:bases algorismiques} to compute a basis for $H$. If it
contains only one element, then $H$ is cyclic and we are done.

Otherwise ($H$ is not cyclic) we can easily compute a free-abelian basis and a free basis for the respective projections $H\tau \leqslant \ZZ^m$ and $H\pi\leqslant F_n$. From the basis for $H$ we can immediately extract a free-abelian basis for $\ZZ^m \cap H=H\tau \cap H$. And, using Theorem~\ref{thm:ip}, we can decide whether $F_n \cap H=H\pi \cap H$ is finitely generated or not and, in the affirmative case, compute a free basis for it.
Finally, we can decide whether $H\tau \cap H \leqslant\fin H\tau$ and $H\pi \cap
H\leqslant\fin H\pi$ hold or not (applying the well known solutions to $\FIP(\mathbb{Z}^m)$
and $\FIP(F_{n'})$ or, alternatively, using the more general Theorem~\ref{th:problema de
index finit} above); note that if we detected that $H\pi \cap H$ is infinitely generated
then it must automatically be of infinite index in $H\pi$ (which, of course, is finitely
generated).

Now we claim that $H$ is quasi-convex if and only if $H\tau \cap H \leqslant\fin H\tau$ and
$H\pi \cap H\leqslant\fin H\pi$; this will conclude the proof.

For the implication to the right (and applying Theorem~\ref{thmSahattchieve}), assume that
$A\times B\leqslant\fin H$ for some $A\leqslant \ZZ^m$ and $B\leqslant F_n$ being finitely
generated. Applying $\tau$ and $\pi$ we get $A\leqslant\fin H\tau$ and $B\leqslant\fin
H\pi$, respectively (see Lemma~\ref{lem:index de subgrups a traves de epimorfismes}~(i)).
But $A\leqslant H\tau \cap H\leqslant H\tau$ and $B\leqslant H\pi \cap H\leqslant H\pi$
hence, $H\tau \cap H \leqslant\fin H\tau$ and $H\pi \cap H\leqslant\fin H\pi$.

For the implication to the left, assume $H\tau \cap H \leqslant\fin H\tau$ and $H\pi \cap
H\leqslant\fin H\pi$ (and, in particular, $H\pi \cap H$ finitely generated). Take $A=H\tau
\cap H\leqslant\fin H\tau \leqslant \ZZ^m$ and $B=H\pi \cap H\leqslant\fin H\pi \leqslant
F_n$, and we get $A\times B \leqslant\fin H\tau \times H\pi$ (see Lemma~\ref{lem:index
producte directe}). But $H$ is in between, $A\times B\leqslant H\leqslant H\tau \times
H\pi$, hence $A\times B\leqslant\fin H$ and, by Theorem~\ref{thmSahattchieve}, $H$ is
quasi-convex.
\end{proof}

\section{Endomorphisms} \label{sec:morphisms}

In this section we will study the endomorphisms of a finitely generated free-abelian times
free group $G=\ZZ^m \times F_n$ (with the notation from presentation~(\ref{eq:pres F_n x
Z^m})). Without loss of generality, we assume $n\neq 1$.

To clarify notation, we shall use lowercase Greek letters to denote endomorphisms of $F_n$,
and uppercase Greek letters to denote endomorphisms of $G=\ZZ^m \times F_n$. The following
proposition gives a description of how all endomorphisms of $G$ look like.

\begin{prop} \label{prop:classificacio endos}
Let $G=\ZZ^m \times F_n$ with $n\neq 1$. The following is a complete list of all
endomorphisms
 of $G$:
\begin{itemize}\label{eq:expressio endosI}
\item[\rm{\textbf{(I)}}] $\Psi_{\phi,\mathbf{Q,P}}=\mathbf{t^a}u\mapsto \mathbf{t^{aQ+uP}}\,
    u\phi$, where $\phi \in \End(F_n)$, $\mathbf{Q}\in \mathcal{M}_{m}(\ZZ)$, and
    $\mathbf{P}\in \mathcal{M}_{n\times m}(\ZZ)$.
\item[\rm{\textbf{(II)}}] $\Psi_{z,\mathbf{l,h,Q,P}}=\mathbf{t^a}u\mapsto
    \mathbf{t^{aQ+uP}}z^{\mathbf{a}\mathbf{l}\tr +\mathbf{u}\mathbf{h}\tr}$, where $1\neq
    z\in F_n$ is not a proper power, $\mathbf{Q}\in \mathcal{M}_{m}(\ZZ)$, $\mathbf{P}\in
    \mathcal{M}_{n\times m}(\ZZ)$, $\mathbf{0}\neq \mathbf{l}\in \ZZ^m $, and
    $\mathbf{h}\in \ZZ^n$.
\end{itemize}
(In both cases, $\mathbf{u}\in \ZZ^n$ denotes the abelianization of the word $u\in F_n$.)
\end{prop}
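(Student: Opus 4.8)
The strategy is to start from an arbitrary endomorphism $\Psi \colon G \to G$ and progressively pin down its action on the two generating families $t_1,\dots,t_m$ and $x_1,\dots,x_n$, using the structural features of $G$ identified earlier: the center $Z = \ZZ^m$ (here is where the hypothesis $n \neq 1$ is used, via Observation~\ref{prop:caract Fn x Z^m}), and the quotient $G/Z \isom F_n$. First I would observe that $\Psi$ must send the center into the center, so $\Psi$ induces an endomorphism of $Z = \ZZ^m$, which is given by right multiplication by some integral matrix $\mathbf{Q} \in \mathcal{M}_m(\ZZ)$; thus $\mathbf{t^a} \mapsto \mathbf{t^{aQ}}(\text{something in }F_n)$, and since $\mathbf{t^a}$ is central, that ``something'' must commute with everything in $\im \Psi$. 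Next, passing to the quotient by $Z$, the map $\Psi$ induces an endomorphism $\bar\Psi$ of $G/Z \isom F_n$; call $\phi$ its expression in $\End(F_n)$. So for each generator $x_j$ we get $x_j \Psi = \mathbf{t^{p_j}}\, (x_j\phi)\cdot c_j$ for some vector $\mathbf{p_j} \in \ZZ^m$ and some correction term $c_j$ lying in the center, i.e. $c_j = \mathbf{t^{(\text{something})}}$ — wait, more carefully: $x_j\Psi$ has free part equal to $x_j\phi$ only up to an element whose image in $F_n$ is trivial, i.e.\ up to a central element; collecting the central parts, we can write $x_j \Psi = \mathbf{t^{p_j}} (x_j \phi)$, and we let $\mathbf{P} \in \mathcal{M}_{n\times m}(\ZZ)$ have rows $\mathbf{p_j}$. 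Putting these together and using that every element of $G$ is $\mathbf{t^a} w(x_1,\dots,x_n)$ with $w$ reduced, a direct computation gives $\mathbf{t^a} w \mapsto \mathbf{t^{aQ + \mathbf{w}P}} (w\phi)$, which is exactly form~\textbf{(I)} — \emph{provided} $\phi$ is a genuine endomorphism of $F_n$, i.e.\ provided the free parts of the $x_j\Psi$ actually generate a homomorphic image of $F_n$ inside $F_n$.

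The subtlety — and the main obstacle — is that $\bar\Psi$ need not literally be realized by an endomorphism of $F_n$ landing inside $F_n$: the image $G\Psi$ could be \emph{abelian}, in which case $G/Z$ maps to an abelian group and the ``free part'' of the image collapses. Concretely, $\im(\bar\Psi) = (G\Psi)Z/Z$ is a subgroup of $F_n$; if it is cyclic (or trivial), then $G\Psi$ lies in an abelian subgroup of $G$, and this abelian subgroup — being a subgroup of $\ZZ^m \times F_n$ that is abelian — is generated by $\ZZ^m$ together with (a power of) a single reduced word $z \in F_n$, which we may take to be not a proper power. This is the case that produces form~\textbf{(II)}. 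So the plan is to split into two cases according to whether $G\Psi$ is abelian or not. If $G\Psi$ is non-abelian, then $\im(\bar\Psi)$ is a non-cyclic subgroup of $F_n$, hence free non-abelian, and one checks that $\bar\Psi$ lifts to an honest $\phi \in \End(F_n)$ (choose free-basis images to be the free parts of the $x_j\Psi$); this yields case~\textbf{(I)}. If $G\Psi$ is abelian, it is contained in $\langle \ZZ^m, z\rangle \isom \ZZ^{m+1}$ for a suitable non-proper-power $z \in F_n$ (or $G\Psi \leqslant \ZZ^m$, which is the subcase $\mathbf{l}=\mathbf{0}$ — though the statement requires $\mathbf{l}\neq\mathbf{0}$, so the purely-central image is presumably absorbed into case~\textbf{(I)} by taking $\phi$ to be the constant-trivial endomorphism); then $\mathbf{t^a} \mapsto \mathbf{t^{aQ}} z^{\mathbf{a l}\tr}$ and $x_j \mapsto \mathbf{t^{p_j}} z^{h_j}$, and assembling these on a general element $\mathbf{t^a}w$ gives $\mathbf{t^a}w \mapsto \mathbf{t^{aQ + \mathbf{w}P}} z^{\mathbf{a l}\tr + \mathbf{w h}\tr}$, which is form~\textbf{(II)}.

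Finally I would verify the converse direction: that every map of the form \textbf{(I)} or \textbf{(II)} is indeed a well-defined endomorphism of $G$. For \textbf{(I)} this is routine — one checks compatibility with the defining relators $[t_i,t_j]$ and $[t_i,x_k]$, which holds because the target of each $t_i$ is central and the target of each $x_k$ differs from $x_k\phi$ by a central factor. For \textbf{(II)} one must additionally check that the assignment respects the free-group relations among the $x_j$'s: but since the images $\mathbf{t^{p_j}} z^{h_j}$ all lie in the abelian group $\langle \ZZ^m, z\rangle$, the only relations that must hold are the abelian ones, which hold automatically because the exponent map $\mathbf{w} \mapsto \mathbf{w h}\tr$ factors through the abelianization $F_n \to \ZZ^n$; and the relators $[t_i,x_k]$ are respected since the whole image is abelian. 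I would also note that $1 \neq z$ not a proper power can be arranged without loss of generality (replacing $z$ by its primitive root and scaling $\mathbf{l}, \mathbf{h}$), and that the hypothesis $\mathbf{l} \neq \mathbf{0}$ in \textbf{(II)} ensures the center is not entirely killed — the degenerate case being subsumed under \textbf{(I)}. The one place to be careful is the interplay in case~\textbf{(II)} between $\mathbf{Q}$ and $\mathbf{l}$: since $t_i\Psi = \mathbf{t^{row_i(Q)}} z^{l_i}$ must still be central in $G\Psi$, and $G\Psi$ is abelian, there is no further constraint — but this is exactly why the abelian case must be separated out, since in the non-abelian case~\textbf{(I)} centrality of $t_i\Psi$ forces the free-part correction to vanish.
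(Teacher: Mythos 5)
Your overall architecture --- write the images of the generators in normal form, use the centrality of the $t_j$'s to constrain the free parts of their images, and split into two cases according to whether the image is abelian --- is essentially the paper's proof, and your verification of the converse direction (that every map of form (I) or (II) really is an endomorphism) is correct. But your opening claim, that ``$\Psi$ must send the center into the center,'' is false for endomorphisms, and the type (II) maps themselves are counterexamples: $t_j\Psi_{z,\mathbf{l,h,Q,P}}=\mathbf{t^{q_j}}z^{l_j}$ is not central in $G$ when $l_j\neq 0$ and $n\geqslant 2$. What is true is only that $Z\Psi$ lands in the center of the \emph{image} $G\Psi$. This matters because your derivation of form (I) passes through an induced endomorphism $\bar\Psi$ of $G/Z\isom F_n$, which exists only when $Z\Psi\subseteq Z$; in your non-abelian case this does hold, but it requires the additional (true) fact that the center of a non-abelian subgroup $H\leqslant \ZZ^m\times F_n$ is $H\cap\ZZ^m$, which you neither state nor prove. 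As written, the first paragraph of your plan is internally inconsistent (you assert $Z\Psi\subseteq Z$ and in the next clause allow $\mathbf{t^a}\mapsto\mathbf{t^{aQ}}\cdot(\text{something in }F_n)$).

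The paper avoids the quotient $G/Z$ entirely and argues more directly: writing $x_i\Psi=\mathbf{t^{p_i}}w_i$ and $t_j\Psi=\mathbf{t^{q_j}}z_j$, the fact that $\Psi$ must respect the relators $[t_k,t_j]$ and $[t_k,x_i]$ forces every $w_i$ and every $z_j$ to commute with $z_k$ in $F_n$; if some $z_k\neq 1$, the structure of centralizers in a free group makes all of them powers of a single non-proper-power $z$, which is exactly type (II), and if every $z_j=1$ one simply sets $\phi\colon x_i\mapsto w_i$ to obtain type (I). Your alternative route --- an abelian subgroup of $\ZZ^m\times F_n$ projects to a cyclic subgroup of $F_n$, hence is contained in $\ZZ^m\times\langle z\rangle$ --- encodes the same free-group fact and works once the center issue above is repaired; your bookkeeping for the degenerate subcase $\mathbf{l}=\mathbf{0}$ (absorbed into type (I) with $\phi$ factoring through the abelianization onto $\langle z\rangle$) is also correct.
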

\begin{proof}
It is straightforward to check that all maps of types (I) and (II) are, in fact,
endomorphisms of $G$.

To see that this is the complete list of all of them, let $\Psi \colon G\to G$ be an
arbitrary endomorphism of $G$. Looking at the normal form of the images of the $x_i$'s and
$t_j$'s, we have
\begin{equation}\label{eq:assignacio generica}
\Psi \colon \left\{ \begin{array}{rcl}
x_i \!\! &\longmapsto &\!\! \mathbf{t^{p_i}}w_i \\
t_j \!\! & \longmapsto &\!\! \mathbf{t^{q_j}}
z_j,
\end{array} \right.
\end{equation}
where $\mathbf{p_i},\mathbf{q_j} \in \ZZ^m$ and $w_i,\, z_j\in F_n$, $i\in [n]$, $j\in
[m]$. Let us distinguish two cases.

\medskip

\emph{Case 1: $z_j=1$ for all $j\in [m]$}. Denoting $\phi$ the endomorphism of $F_n$ given
by $x_i \mapsto w_i$, and $\mathbf{P}$ and $\mathbf{Q}$ the following integral matrices (of
sizes $n\times m$ and $m\times m$, respectively)
 $$
\mathbf{P}=\left( \begin{array}{ccc} p_{11} & \cdots & p_{1m} \\ \vdots & \ddots & \vdots \\
p_{n1} & \cdots & p_{nm} \end{array} \right) =\left( \begin{array}{c} \mathbf{p_{1}} \\
\vdots \\ \mathbf{p_{n}} \end{array} \right) \text{\quad and \quad} \mathbf{Q}=\left(
\begin{array}{ccc} q_{11} & \cdots & q_{1m} \\ \vdots & \ddots & \vdots \\ q_{m1} & \cdots &
q_{mm} \end{array} \right) =\left( \begin{array}{c} \mathbf{q_{1}} \\ \vdots \\
\mathbf{q_{m}} \end{array} \right),
 $$
we can write
 $$
\Psi \colon \left\{ \begin{array}{rcl}
u \!\!& \longmapsto &\!\! \mathbf{t^{uP}} u \phi \\
 \mathbf{t^a} \!\!& \longmapsto &\!\! \mathbf{t^{aQ}},
 \end{array} \right.
 $$
where $u\in F_n$ and $\mathbf{a}\in \ZZ^m$. So, $(\mathbf{t^a}u)\Psi =\mathbf{t^{aQ+uP}}
u\phi$ and $\Psi$ equals $\Psi_{\phi,\mathbf{Q,P}}$ from \ti.

\medskip

\emph{Case 2: $z_k\neq 1$ for some $k\in [m]$}. For $\Psi$ to be well defined,
$\mathbf{t^{p_i}}w_i$ and $\mathbf{t^{q_j}} z_j$ must all commute with $\mathbf{t^{q_k}}
z_k$, and so $w_i$ and $z_j$ with $z_k \neq 1$, for all $i\in [n]$ and $j\in [m]$. This
means that $w_i =z^{h_i}$, $z_j =z^{\, l_j}$ for some integers $h_i,\, l_j \in \ZZ$, $i\in
[n]$, $j\in [m]$, with $l_k\neq 0$, and some $z\in F_n$ not being a proper power. Hence,
$(\mathbf{t^a}u)\Psi =(\mathbf{t^a}\Psi )(u\Psi
)=(\mathbf{t^{aQ}}z^{\mathbf{a}\mathbf{l}\tr})(\mathbf{t^{uP}}z^{\mathbf{u}\mathbf{h}\tr})
=\mathbf{t^{aQ+uP}}z^{\mathbf{a}\mathbf{l}\tr +\mathbf{u}\mathbf{h}\tr}$ and $\Psi$ equals
$\Psi_{z,\mathbf{l,h,Q,P}}$ from \tii.

This completes the proof.
\end{proof}

Note that if $n=0$ then \ti\ and \tii\ endomorphisms do coincide. Otherwise,
\tii\ endomorphisms will be seen to be neither injective nor surjective. The following
proposition gives a quite natural characterization of which endomorphisms of \ti\ are
injective, and which are surjective. It is important to note that the matrix $\mathbf{P}$
plays absolutely no role in this matter.

\begin{prop}\label{prop:caract mono}
Let $\Psi$ be an endomorphism of $G=\ZZ^m \times F_n$, with $n\geqslant 2$. Then,
\begin{itemize}
\item[\emph{(i)}] $\Psi$ is a monomorphism if and only if it is of \ti, $\Psi
    =\Psi_{\phi, \mathbf{Q}, \mathbf{P}}$, with $\phi$ a monomorphism of $F_n$, and $\det
    (\mathbf{Q})\neq 0$,
\item[\emph{(ii)}] $\Psi$ is an epimorphism if and only if it is of \ti, $\Psi
    =\Psi_{\phi, \mathbf{Q}, \mathbf{P}}$, with $\phi$ an epimorphism of $F_n$, and $\det
    (\mathbf{Q})=\pm 1$.
\item[\emph{(iii)}] $\Psi$ is an automorphism if and only if it is of \ti, $\Psi
    =\Psi_{\phi, \mathbf{Q}, \mathbf{P}}$, with $\phi \in \Aut(F_n)$ and $\mathbf{Q} \in
    GL_m(\ZZ)$; in this case, $(\Psi_{\phi, \mathbf{Q},
    \mathbf{P}})^{-1}=\Psi_{\phi^{-1},\mathbf{Q}^{-1},-\mathbf{M^{-1}PQ^{-1}}}$, where
    $\mathbf{M}\in \GL_n(\ZZ)$ is the abelianization of $\phi$.
\end{itemize}
\end{prop}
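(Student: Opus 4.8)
The plan is to read everything off the complete classification in Proposition~\ref{prop:classificacio endos}. First I would dispose of the endomorphisms of \tii: since $n\geqslant 2$ the group $F_n$, hence $G$, is nonabelian, whereas the image of any $\Psi_{z,\mathbf{l,h,Q,P}}$ lies in $\ZZ^m\times\langle z\rangle$, which is abelian; so such a $\Psi$ is not injective ($G$ is nonabelian), and its image projected to $F_n$ under $\pi$ is only $\langle z\rangle\neq F_n$, so it is not surjective. Therefore monomorphisms, epimorphisms and automorphisms are all forced to be of \ti, and it only remains to decide, for $\Psi=\Psi_{\phi,\mathbf{Q,P}}\colon\mathbf{t^a}u\mapsto\mathbf{t^{aQ+uP}}\,u\phi$, exactly when each property holds.

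For (i) I would describe the kernel directly: $\mathbf{t^a}u\in\ker\Psi$ iff $u\phi=1$ and $\mathbf{aQ}=-\mathbf{uP}$. If $\phi$ is a monomorphism and $\det(\mathbf{Q})\neq 0$, then $u\phi=1$ gives $u=1$, whence $\mathbf{uP}=\mathbf{0}$, $\mathbf{aQ}=\mathbf{0}$, and $\mathbf{a}=\mathbf{0}$; so $\Psi$ is injective. Conversely, restricting $\Psi$ to $\ZZ^m$ yields $\mathbf{t^a}\mapsto\mathbf{t^{aQ}}$, which is injective only if the map $\mathbf{a}\mapsto\mathbf{aQ}$ on $\ZZ^m$ is injective, i.e. $\det(\mathbf{Q})\neq 0$; and if $\ker\phi$ contained some $u\neq 1$, then putting $k=\det(\mathbf{Q})\neq 0$ and $\mathbf{b}=\mathbf{uP}\,(k\mathbf{Q}^{-1})\in\ZZ^m$ (integral because $k\mathbf{Q}^{-1}$ is the adjugate of $\mathbf{Q}$) one checks $(u^k)\Psi=\mathbf{t^{kuP}}=(\mathbf{t^b})\Psi$ while $u^k\neq\mathbf{t^b}$ (the former has nonempty free part), contradicting injectivity; hence $\phi$ is a monomorphism.

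For (ii), composing with $\pi\colon G\twoheadrightarrow F_n$ turns $\Psi$ into $\mathbf{t^a}u\mapsto u\phi$, whose image is $F_n\phi$; so $\Psi$ an epimorphism forces $\phi$ an epimorphism, and here I would invoke that finitely generated free groups are Hopfian, so that $\phi$ is in fact an automorphism and $\ker\phi=1$. Then $\im\Psi\cap\ZZ^m=\{\mathbf{t^{aQ}}\mid\mathbf{a}\in\ZZ^m\}$ (a general $\mathbf{t^a}u$ maps into $\ZZ^m$ exactly when $u\phi=1$, i.e. $u=1$), so $\Psi$ surjective forces $\mathbf{Q}$ onto, i.e. $\det(\mathbf{Q})=\pm 1$. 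Conversely, if $\phi$ is an epimorphism and $\det(\mathbf{Q})=\pm 1$, then $\ZZ^m\subseteq\im\Psi$, and for any $\mathbf{t^c}v$ one picks $u$ with $u\phi=v$ to get $\mathbf{t^c}v=\mathbf{t^{c-uP}}\,(u\Psi)\in\im\Psi$; so $\Psi$ is onto. The Hopfian input is essential: without $\ker\phi=1$ the subgroup of $\ZZ^m$ reached by $\Psi$ can be all of $\ZZ^m$ even when $\det(\mathbf{Q})\neq\pm1$, so the stated necessary condition would otherwise be false — this is the one genuinely delicate point.

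Finally, for (iii), $\Psi$ is an automorphism iff it is simultaneously a monomorphism and an epimorphism, so by (i) and (ii) iff it is of \ti\ with $\phi$ both mono and epi (equivalently $\phi\in\Aut(F_n)$) and $\det(\mathbf{Q})\neq 0$ together with $\det(\mathbf{Q})=\pm 1$ (equivalently $\mathbf{Q}\in\GL_m(\ZZ)$). The inverse formula is then checked by direct substitution: letting $\mathbf{M}\in\GL_n(\ZZ)$ be the abelianization of $\phi$, so that $u\phi$ abelianizes to $\mathbf{uM}$, one computes both composites of $\Psi_{\phi,\mathbf{Q,P}}$ with $\Psi_{\phi^{-1},\mathbf{Q}^{-1},-\mathbf{M^{-1}PQ^{-1}}}$ on a general $\mathbf{t^a}u$; the free parts cancel because $\phi\phi^{-1}=\id=\phi^{-1}\phi$, and the abelian exponent collapses via $\mathbf{aQQ^{-1}}=\mathbf{a}$ and a cancellation of the form $\mathbf{uPQ^{-1}}-\mathbf{uMM^{-1}PQ^{-1}}=\mathbf{0}$, leaving $\mathbf{t^a}u$ in both orders. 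Apart from the Hopficity appeal in (ii), the whole argument is bookkeeping, with the only finesse being the explicit collision manufactured in (i) from a hypothetical nontrivial element of $\ker\phi$.
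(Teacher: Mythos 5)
Your proposal is correct and follows essentially the same route as the paper: rule out type (II) maps because their images are abelian (resp.\ project to a cyclic subgroup of $F_n$), read off $\det(\mathbf{Q})\neq 0$ from the restriction to $\ZZ^m$, manufacture a nontrivial kernel element from a hypothetical $1\neq u\in\ker\phi$ by clearing denominators with $\det(\mathbf{Q})$, invoke Hopficity of $F_n$ in (ii) exactly where the paper does (it is indeed the step that lets one conclude $\mathbf{u_j}=\mathbf{0}$ for preimages of the $t_j$'s), and verify the inverse formula by direct computation. The only cosmetic differences are that you phrase the (i) argument as a collision rather than a kernel element, and prove surjectivity in (ii) directly instead of via the explicit right inverse; both are equivalent to the paper's steps.
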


\begin{proof}
(i). Suppose that $\Psi$ is injective. Then $\Psi$ can not be of \tii\ since, if it
were, the commutator of any two elements in $F_n$ ($n\geqslant 2$) would be in the kernel
of $\Psi$. Hence, $\Psi =\Psi_{\phi, \mathbf{Q}, \mathbf{P}}$ for some $\phi \in
\End(F_n)$, $\mathbf{Q}\in \mathcal{M}_{m}(\ZZ)$, and $\mathbf{P}\in \mathcal{M}_{n\times
m}(\ZZ)$. Since $\mathbf{t^a}\Psi =\mathbf{t^{a Q}}$, the injectivity of $\Psi$ implies
that of $\mathbf{a}\mapsto \mathbf{aQ}$; hence, $\det(\mathbf{Q})\neq 0$. Finally, in order
to prove the injectivity of $\phi$, let $u\in F_n$ with $u\phi =1$. Note that the
endomorphism of $\QQ^m$ given by $\mathbf{Q}$ is invertible so, in particular, there exist
$\mathbf{v}\in \QQ^m$ such that $\mathbf{v}\mathbf{Q} =\mathbf{uP}$; write
$\mathbf{v}=\frac{1}{b} \mathbf{a}$ for some $\mathbf{a}\in \ZZ^m$ and $b\in \ZZ$, $b\neq
0$, and we have $\mathbf{aQ}=b\, \mathbf{vQ}=b\, \mathbf{uP}$; thus,
$(\mathbf{t}^{\mathbf{a}}u^{-b})\Psi
=\mathbf{t}^{\mathbf{aQ}}(\mathbf{t}^{\mathbf{uP}}1)^{-b}
=\mathbf{t}^{\mathbf{aQ-}b\mathbf{uP}}=\mathbf{t^0}=1$. Hence,
$\mathbf{t}^{\mathbf{a}}u^{-b}=1$ and so, $u=1$.

Conversely, let $\Psi =\Psi_{\phi, \mathbf{Q}, \mathbf{P}}$ be of \ti, with $\phi$ a
monomorphism of $F_n$ and $\det (\mathbf{Q})\neq \mathbf{0}$, and let $\mathbf{t^{a}}u\in
G$ be such that $1=(\mathbf{t^{a}}u)\Psi =\mathbf{t}^{\mathbf{aQ+uP}} \, u\phi$. Then,
$u\phi=1$ and so, $u=1$; and $\mathbf{0=aQ+uP=aQ}$ and so, $\mathbf{a=0}$. Hence, $\Psi$ is
injective.

(ii). Suppose that $\Psi$ is onto. Since the image of an endomorphism of \tii\ followed
by the projection $\pi$ onto $F_n$, $n\geqslant 2$, is contained in $\langle z\rangle$ (and
so is cyclic), $\Psi$ cannot be of \tii. Hence, $\Psi =\Psi_{\phi, \mathbf{Q},
\mathbf{P}}$ for some $\phi \in \End(F_n)$, $\mathbf{Q}\in \mathcal{M}_{m}(\ZZ)$, and
$\mathbf{P}\in \mathcal{M}_{n\times m}(\ZZ)$. Given $v\in F_n \leqslant G$ there must be
$\mathbf{t^a}u\in G$ such that $(\mathbf{t^a}u)\Psi =v$ and so $u\phi =v$. Thus $\phi
\colon F_n\to F_n$ is onto. On the other hand, for every $j\in [m]$, let
$\boldsymbol{\delta}_{\mathbf{j}}$ be the canonical vector of $\ZZ^m$ with 1 at coordinate
$j$, and let $\mathbf{t}^{\mathbf{b_j}} u_j\in G$ be a pre-image by $\Psi$ of $t_j
=\mathbf{t}^{\boldsymbol{\delta}_{\mathbf{j}}}$. We have
$(\mathbf{t}^{\mathbf{b_j}}u_j)\Psi =\mathbf{t}^{\boldsymbol{\delta}_{\mathbf{j}}}$, i.e.\
$u_j \phi =1$, $\mathbf{u_j}=\mathbf{0}$ and
$\mathbf{b_j}\mathbf{Q}=\mathbf{b_j}\mathbf{Q}+\mathbf{u_j}\mathbf{P}=\boldsymbol{\delta}_{\mathbf{j}}$.
This means that the matrix $\mathbf{B}$ with rows $\mathbf{b_j}$ satisfies
$\mathbf{BQ=I_m}$ and thus, $\det (\mathbf{Q})=\pm 1$.

Conversely, let $\Psi =\Psi_{\phi, \mathbf{Q}, \mathbf{P}}$ be of \ti, with $\phi$
being an epimorphism of $F_n$ and $\det (\mathbf{Q})=\pm 1$. By the hopfianity of $F_n$,
$\phi \in \Aut(F_n)$ and we can consider $\Upsilon
=\Psi_{\phi^{-1},\mathbf{Q}^{-1},-\mathbf{M^{-1}PQ^{-1}}}$, where $\mathbf{M}\in GL_n(\ZZ)$
is the abelianization of $\phi$. For every $\mathbf{t^{a}}u\in G$, we have
 $$
(\mathbf{t^{a}}u)\Upsilon\Psi =\bigl(\mathbf{t^{aQ^{-1}-uM^{-1}PQ^{-1}}} (u\phi^{-1})\bigr)\Psi
=\mathbf{t^{a-uM^{-1}P+uM^{-1}P}}u =\mathbf{t^{a}}u.
 $$
Hence, $\Psi$ is onto.

(iii). The equivalence is a direct consequence of~(i) and~(ii). To see the actual value of
$\Psi^{-1}$ it remains to compute the composition in the reverse order:
\begin{equation*}
(\mathbf{t^{a}}u)\Psi\Upsilon =\bigl(\mathbf{t^{aQ+uP}}(u\phi)\bigr)\Upsilon =\mathbf{t^{a+uPQ^{-1}-uMM^{-1}PQ^{-1}}}u
=\mathbf{t^{a}}u. \qedhere
\end{equation*}
\end{proof}

Immediately from these characterizations for an endomorphism to be mono, epi or auto, we
have the following corollary.

\begin{cor}
$\ZZ^m \times F_n $ is hopfian and not cohopfian. \qed
\end{cor}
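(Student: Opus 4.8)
The plan is to read both statements directly off the characterizations in Proposition~\ref{prop:caract mono}. Recall that a group $G$ is \emph{hopfian} if every epimorphism $G\twoheadrightarrow G$ is an automorphism, and \emph{cohopfian} if every monomorphism $G\hookrightarrow G$ is an automorphism; thus, to see that $G=\ZZ^m\times F_n$ is not cohopfian it suffices to exhibit a single injective endomorphism of $G$ that is not onto. Since Proposition~\ref{prop:caract mono} is stated for $n\geqslant 2$, I would first dispose of the remaining case $n=0$, where $G=\ZZ^m$ is finitely generated abelian, hence hopfian, and (for $m\geqslant 1$) not cohopfian because multiplication by $2$ is an injective non-surjective endomorphism; from now on assume $n\geqslant 2$.

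For hopfianity, let $\Psi$ be an epimorphism of $G$. By Proposition~\ref{prop:caract mono}(ii) it is of \ti, $\Psi=\Psi_{\phi,\mathbf{Q},\mathbf{P}}$, with $\phi$ an epimorphism of $F_n$ and $\det(\mathbf{Q})=\pm 1$. Finitely generated free groups are hopfian (a classical fact, already invoked inside the proof of Proposition~\ref{prop:caract mono}(ii)), so $\phi\in\Aut(F_n)$; and $\det(\mathbf{Q})=\pm 1$ is exactly $\mathbf{Q}\in\GL_m(\ZZ)$. Proposition~\ref{prop:caract mono}(iii) then yields that $\Psi$ is an automorphism, so $G$ is hopfian.

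For non-cohopfianity, I would pick any monomorphism $\phi$ of $F_n$ that is not surjective; since $n\geqslant 2$ such a $\phi$ exists, for instance $x_1\mapsto x_1^2$ and $x_i\mapsto x_i$ for $2\leqslant i\leqslant n$, whose image is the proper free subgroup $\langle x_1^2,x_2,\ldots,x_n\rangle$ of rank $n$. Set $\Psi=\Psi_{\phi,\mathbf{I}_m,\mathbf{0}}$. By Proposition~\ref{prop:caract mono}(i), $\Psi$ is a monomorphism (its \ti\ data has injective $\phi$ and $\det(\mathbf{I}_m)=1\neq 0$); but by Proposition~\ref{prop:caract mono}(ii) it is not an epimorphism, since $\phi$ is not one. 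Hence $\Psi$ is an injective endomorphism of $G$ which is not an automorphism, and $G$ is not cohopfian. The only non-bookkeeping ingredients are the two classical facts about $F_n$ — its hopfianity, used for the first half, and the existence of a proper subgroup isomorphic to $F_n$, used for the second — and once these are granted there is no real obstacle: everything else is a direct translation through Proposition~\ref{prop:caract mono}.
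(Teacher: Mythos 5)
Your proof is correct and is exactly the argument the paper intends: the corollary is stated as an immediate consequence of Proposition~\ref{prop:caract mono}, and you have simply written out the two readings of that proposition (hopfianity of $F_n$ upgrades every epimorphism to an automorphism via parts (ii) and (iii); a non-surjective monomorphism of $F_n$ such as $x_1\mapsto x_1^2$ yields a non-surjective monomorphism of $G$ via parts (i) and (ii)). The extra care with the case $n=0$ is a harmless bonus beyond the paper's standing assumption $n\neq 1$.
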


The hopfianity of free-abelian times free groups was already known as part of a bigger
result: in \cite{green_graph_1990} and~\cite{humphries_stephen_p._representations_1994} it
was shown that finitely generated partially commutative groups (this includes groups of the
form $G=\ZZ^m \times F_n$) are residually finite and so, hophian. However, our proof is
more direct and explicit in the sense of giving complete characterizations of the
injectivity and surjectivity of a given endomorphism of $G$. We remark that, despite it
could seem reasonable, the hophianity of $\ZZ^m \times F_n$ does not follow directly from
that of free-abelian and free groups (both very well known): in~\cite{tyrer_direct_1971},
the author constructs a direct product of two hophian groups which is \emph{not} hophian.

For later use, next lemma summarizes how to operate \ti\ endomorphisms (compose, invert
and take a power); it can be easily proved by following routine computations. The reader
can easily find similar equations for the composition of two \tii\ endomorphisms, or
one of each  (we do not include them here because they will not be necessary for the rest
of the paper).

\begin{lem} \label{lab:endosI comportament algebraic}
Let $\Psi_{\phi,\mathbf{Q},\mathbf{P}}$ and $\Psi_{\phi',\mathbf{Q'},\mathbf{P'}}$ be two
\ti\ endomorphisms of $G=\ZZ^m \times F_n$, $n\neq 1$, and denote by $\mathbf{M} \in
\mathcal{M}_n(\ZZ)$ the (matrix of the) abelianization of $\phi \in \End(F_n)$. Then,
\begin{itemize}
\item[\emph{(i)}] $\Psi_{\phi,\mathbf{Q},\mathbf{P}} \cdot
    \Psi_{\phi',\mathbf{Q'},\mathbf{P'}} =\Psi_{\phi \phi',\mathbf{QQ'},
    \mathbf{PQ'}+\mathbf{M}\mathbf{P'}}$,
\item[\emph{(ii)}] for all $k\geqslant 1$, $(\Psi_{\phi,\mathbf{Q},\mathbf{P}})^k=
    \Psi_{\phi^k, \mathbf{Q}^k, \mathbf{P_k}}$, where $\mathbf{P_k}=\sum_{i=1}^{k}
    \mathbf{M}^{i-1} \mathbf{P} \mathbf{Q}^{k-i}$,
\item[\emph{(iii)}] $\Psi_{\phi,\mathbf{Q},\mathbf{P}}$ is invertible if and only if
    $\phi \in \Aut(F_n)$ and $\mathbf{Q}\in \GL_m(\ZZ)$; in this case,
    $(\Psi_{\phi,\mathbf{Q},\mathbf{P}})^{-1}= \Psi_{\phi^{-1}, \mathbf{Q}^{-1},
    -\mathbf{M^{-1}PQ^{-1}}}$.
\item[\emph{(iv)}] For every $\mathbf{a}\in \ZZ^m$ and $u\in F_n$, the right conjugation
    by $\mathbf{t^{a}}u$ is $\Gamma_{\mathbf{t^{a}}u}
    =\Psi_{\gamma_{u},\mathbf{I_m},\mathbf{0}}$, where $\gamma_u$ is the right
    conjugation by $u$ in $F_n$, $v\mapsto u^{-1}vu$, $\mathbf{I_m}$ is the identity
    matrix of size~$m$, and $\mathbf{0}$ is the zero matrix of size $n\times m$. \qed
\end{itemize}
\end{lem}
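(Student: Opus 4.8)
The plan is to establish all four items by one direct computation on the normal form $\mathbf{t^a}u$ of an arbitrary element of $G$, using a single structural ingredient: abelianization is functorial, so that if $\mathbf{M}\in\mathcal{M}_n(\ZZ)$ denotes the abelianization of $\phi\in\End(F_n)$, then the abelianization of $u\phi$ equals $\mathbf{u}\mathbf{M}$ for every $u\in F_n$ (here $\mathbf{u}$ is the abelianization of $u$, a row vector, and $\mathbf{M}$ acts on the right, as in the conventions of the paper). This is the only place where the ``invisible'' matrix $\mathbf{M}$ enters, and keeping track of it — above all of its powers — is the one spot where a little care is needed.

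For (i), I would apply $\Psi_{\phi,\mathbf{Q},\mathbf{P}}$ first, sending $\mathbf{t^a}u$ to $\mathbf{t^{aQ+uP}}(u\phi)$, and then $\Psi_{\phi',\mathbf{Q'},\mathbf{P'}}$, which turns this into $\mathbf{t^{(aQ+uP)Q'+\,\overline{u\phi}\,P'}}\,(u\phi\phi')$. Substituting $\overline{u\phi}=\mathbf{u}\mathbf{M}$ and regrouping the exponent as $\mathbf{a}(\mathbf{Q}\mathbf{Q'})+\mathbf{u}(\mathbf{P}\mathbf{Q'}+\mathbf{M}\mathbf{P'})$, one reads off that the composite is exactly $\Psi_{\phi\phi',\,\mathbf{QQ'},\,\mathbf{PQ'}+\mathbf{MP'}}$, which is again of \ti\ by Proposition~\ref{prop:classificacio endos}.

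Part (ii) then follows by induction on $k$: the case $k=1$ is the trivial identity $\mathbf{P_1}=\mathbf{M}^0\mathbf{P}\mathbf{Q}^0=\mathbf{P}$, and for the inductive step I would apply (i) to $(\Psi_{\phi,\mathbf{Q},\mathbf{P}})^k\cdot\Psi_{\phi,\mathbf{Q},\mathbf{P}}=\Psi_{\phi^k,\mathbf{Q}^k,\mathbf{P_k}}\cdot\Psi_{\phi,\mathbf{Q},\mathbf{P}}$, noting that the abelianization of the first factor's free part $\phi^k$ is $\mathbf{M}^k$; this produces last matrix $\mathbf{P_k}\mathbf{Q}+\mathbf{M}^k\mathbf{P}=\bigl(\sum_{i=1}^{k}\mathbf{M}^{i-1}\mathbf{P}\mathbf{Q}^{k-i}\bigr)\mathbf{Q}+\mathbf{M}^k\mathbf{P}=\sum_{i=1}^{k+1}\mathbf{M}^{i-1}\mathbf{P}\mathbf{Q}^{k+1-i}=\mathbf{P_{k+1}}$, as required. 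For (iii), the equivalence is nothing but Proposition~\ref{prop:caract mono}(iii) (granted that $\Psi$ is already of \ti); to confirm the stated formula for the inverse it is enough to compose $\Psi_{\phi,\mathbf{Q},\mathbf{P}}$ with $\Psi_{\phi^{-1},\mathbf{Q}^{-1},-\mathbf{M^{-1}PQ^{-1}}}$ in both orders via (i) and check that the three parameters collapse to $\id$, $\mathbf{I_m}$ and $\mathbf{0}$, the last because the two summands cancel, e.g.\ $\mathbf{P}\mathbf{Q}^{-1}+\mathbf{M}\bigl(-\mathbf{M^{-1}}\mathbf{P}\mathbf{Q}^{-1}\bigr)=\mathbf{0}$. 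Finally, (iv) is immediate: since $\mathbf{t^a}$ is central in $G$, conjugation by $\mathbf{t^a}u$ agrees with conjugation by $u$, and on $\mathbf{t^b}v$ it sends $\mathbf{t^b}v\mapsto \mathbf{t^b}(u^{-1}vu)=\mathbf{t^b}(v\gamma_u)$, which is precisely the effect of $\Psi_{\gamma_u,\mathbf{I_m},\mathbf{0}}$.

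All the computations are routine; the only genuine subtlety is the bookkeeping of the abelianization matrix $\mathbf{M}$ and of its powers in (ii), since it is exactly this term that makes the $\mathbf{uP}$-summands compose in the asymmetric way recorded in (i), and I expect that to be the sole potential source of a sign or transpose slip.
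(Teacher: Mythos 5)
Your proof is correct and is exactly the ``routine computation'' the paper alludes to (the paper omits the proof of this lemma entirely, remarking only that it follows by routine calculations): the direct evaluation on normal forms for (i), induction via (i) for (ii), Proposition~\ref{prop:caract mono}(iii) plus a two-sided check for (iii), and centrality of $\mathbf{t^a}$ for (iv) are all verified correctly, including the key bookkeeping $\overline{u\phi}=\mathbf{u}\mathbf{M}$ and the abelianization of $\phi^k$ being $\mathbf{M}^k$.
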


In the rest of the section, we shall use this information to derive the structure of
$\Aut(G)$, where $G=\ZZ^m \times F_n$, $m\geqslant 1$, $n\geqslant 2$.

\begin{thm}\label{prop:desc autos}
For $G=\ZZ^m \times F_n$, with $m\geqslant 1$ and $n\geqslant 2$, the group $\Aut (G)$ is
isomorphic to the semidirect product $\mathcal{M}_{n\times m}(\ZZ) \rtimes (\Aut(F_n)
\times \GL_m(\ZZ))$ with respect to the natural action. In particular, $\Aut(G)$ is
finitely presented.
\end{thm}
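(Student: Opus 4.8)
The plan is to read off the group structure of $\Aut(G)$ from the two results we already have about type~(I) endomorphisms. By Proposition~\ref{prop:caract mono}(iii), the automorphisms of $G$ are exactly the maps $\Psi_{\phi,\mathbf{Q},\mathbf{P}}$ with $\phi\in\Aut(F_n)$, $\mathbf{Q}\in\GL_m(\ZZ)$ and $\mathbf{P}\in\mathcal{M}_{n\times m}(\ZZ)$ arbitrary; and the triple $(\phi,\mathbf{Q},\mathbf{P})$ is uniquely recovered from $\Psi_{\phi,\mathbf{Q},\mathbf{P}}$ (read $\phi$ off the restriction to $F_n$ followed by $\pi$, then $\mathbf{Q}$ off the restriction to $\ZZ^m$, then $\mathbf{P}$ from the images of the $x_i$'s, using uniqueness of normal forms). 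So as a set $\Aut(G)$ is in bijection with $\mathcal{M}_{n\times m}(\ZZ)\times\bigl(\Aut(F_n)\times\GL_m(\ZZ)\bigr)$, and it remains to match the multiplication, given by Lemma~\ref{lab:endosI comportament algebraic}(i), with that of the claimed semidirect product.

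First I would isolate the two obvious subgroups. Writing $\mathbf{M}$ for the abelianization of $\phi$ and using Lemma~\ref{lab:endosI comportament algebraic}(i),(iii): the set $N=\{\Psi_{\id,\mathbf{I_m},\mathbf{P}}\}$ satisfies $\Psi_{\id,\mathbf{I_m},\mathbf{P}}\Psi_{\id,\mathbf{I_m},\mathbf{P'}}=\Psi_{\id,\mathbf{I_m},\mathbf{P}+\mathbf{P'}}$ (the abelianization of $\id$ being $\mathbf{I_n}$), so $N$ is a subgroup isomorphic to the additive group $(\mathcal{M}_{n\times m}(\ZZ),+)\isom\ZZ^{nm}$; and the set $C=\{\Psi_{\phi,\mathbf{Q},\mathbf{0}}\}$ satisfies $\Psi_{\phi,\mathbf{Q},\mathbf{0}}\Psi_{\phi',\mathbf{Q'},\mathbf{0}}=\Psi_{\phi\phi',\mathbf{QQ'},\mathbf{0}}$, so $C\isom\Aut(F_n)\times\GL_m(\ZZ)$. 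Then $N\cap C=\{\id_G\}$, and every $\Psi_{\phi,\mathbf{Q},\mathbf{P}}$ factors as $\Psi_{\id,\mathbf{I_m},\mathbf{P}\mathbf{Q}^{-1}}\Psi_{\phi,\mathbf{Q},\mathbf{0}}\in NC$, so $\Aut(G)=NC$. Finally a short computation with Lemma~\ref{lab:endosI comportament algebraic}(i),(iii) gives
\begin{equation*}
(\Psi_{\phi,\mathbf{Q},\mathbf{0}})^{-1}\,\Psi_{\id,\mathbf{I_m},\mathbf{P}}\,\Psi_{\phi,\mathbf{Q},\mathbf{0}}=\Psi_{\id,\mathbf{I_m},\,\mathbf{M}^{-1}\mathbf{P}\mathbf{Q}},
\end{equation*}
so $N\normaleq\Aut(G)$ and $C$ acts on $N\isom\mathcal{M}_{n\times m}(\ZZ)$ by $(\phi,\mathbf{Q})\colon\mathbf{P}\mapsto\mathbf{M}^{-1}\mathbf{P}\mathbf{Q}$; viewing $\mathcal{M}_{n\times m}(\ZZ)$ as the group of homomorphisms $\ZZ^n\to\ZZ^m$, this is precisely the natural action (abelianize the $F_n$-coordinate, act on $\ZZ^m$ by the $\GL_m(\ZZ)$-coordinate). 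Hence $\Aut(G)=N\rtimes C\isom\mathcal{M}_{n\times m}(\ZZ)\rtimes(\Aut(F_n)\times\GL_m(\ZZ))$.

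For the finite presentation, I would invoke the classical facts that $\GL_m(\ZZ)$ is finitely presented for every $m$ and that $\Aut(F_n)$ is finitely presented for every $n$; hence $C\isom\Aut(F_n)\times\GL_m(\ZZ)$ is finitely presented, and $N\isom\ZZ^{nm}$ obviously is. It then suffices to use the standard fact that an extension of a finitely presented group by a finitely presented group is finitely presented: lift a finite presentation of $C$ to $\Aut(G)$, adjoin a finite presentation of $N$, and adjoin the finitely many relations expressing (i) the conjugate of each generator of $N$ by each generator of $C$ as a finite word in the generators of $N$ — available since $N$ is normal, finitely generated, and $C$ acts by automorphisms — and (ii) each lifted relator of $C$ as the finite word in the generators of $N$ that it represents. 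Applied to $N\rtimes C$ this produces a finite presentation of $\Aut(G)$. I do not anticipate a genuine obstacle here: once Proposition~\ref{prop:caract mono}(iii) and Lemma~\ref{lab:endosI comportament algebraic} are in hand, everything reduces to routine matrix bookkeeping, and the only points requiring a little care are checking that the conjugation formula above is exactly the ``natural'' action under the paper's right-action conventions, and correctly citing the finite presentability of $\GL_m(\ZZ)$, $\Aut(F_n)$, and of extensions of finitely presented groups.
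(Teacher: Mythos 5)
Your proposal is correct and follows essentially the same route as the paper: identify all automorphisms as type (I) maps via Proposition~\ref{prop:caract mono}(iii), exhibit the normal subgroup $\{\Psi_{\id,\mathbf{I_m},\mathbf{P}}\}\isom\mathcal{M}_{n\times m}(\ZZ)$ and the complement $\{\Psi_{\phi,\mathbf{Q},\mathbf{0}}\}\isom\Aut(F_n)\times\GL_m(\ZZ)$, use the factorization $\Psi_{\phi,\mathbf{Q},\mathbf{P}}=\Psi_{\id,\mathbf{I_m},\mathbf{PQ^{-1}}}\Psi_{\phi,\mathbf{Q},\mathbf{0}}$ and the conjugation formula $\mathbf{P}\mapsto\mathbf{M}^{-1}\mathbf{PQ}$ (which the paper records as two separate formulas, one for each factor of the complement), and conclude finite presentability by the standard extension argument. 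The only cosmetic difference is that you verify the internal semidirect product conditions in one combined computation rather than factor by factor.
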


\begin{proof}
First or all note that, for every $\phi,\, \phi' \in \Aut(F_n)$, every $\mathbf{Q},\,
\mathbf{Q'}\in \GL_m(\ZZ)$, and every $\mathbf{P},\mathbf{P}'\in \mathcal{M}_{n\times
m}(\ZZ)$, we have
 $$
\Psi_{\phi, \mathbf{I_m}, \mathbf{0}} \cdot \Psi_{\phi', \mathbf{I_m}, \mathbf{0}}=
\Psi_{\phi\phi', \mathbf{I_m}, \mathbf{0}},
 $$
 $$
\Psi_{I_n, \mathbf{Q}, \mathbf{0}} \cdot \Psi_{I_n, \mathbf{Q'}, \mathbf{0}} =\Psi_{I_n,
\mathbf{QQ'}, \mathbf{0}}
 $$
 $$
\Psi_{I_n, \mathbf{I_m},\mathbf{P}} \cdot \Psi_{I_n, \mathbf{I_m},\mathbf{P'}}=\Psi_{I_n,
\mathbf{I_m},\mathbf{P+P'}}.
 $$
Hence, the three groups $\Aut(F_n)$, $\GL_m(\ZZ)$, and $\mathcal{M}_{n\times m}(\ZZ)$ (this
last one with the addition of matrices), are all subgroups of $\Aut(G)$ via the three
natural inclusions: $\phi \mapsto \Psi_{\phi, \mathbf{I_m}, \mathbf{0}}$,\phantom{a}
$\mathbf{Q}\mapsto \Psi_{I_n, \mathbf{Q}, \mathbf{0}}$, and $\mathbf{P}\mapsto \Psi_{I_n,
\mathbf{I_m},\mathbf{P}}$, respectively. Furthermore, for every $\phi \in \Aut(F_n)$ and
every $\mathbf{Q}\in \GL_m(\ZZ)$, it is clear that $\Psi_{\phi, \mathbf{I_m}, \mathbf{0}}
\cdot \Psi_{I_n, \mathbf{Q}, \mathbf{0}} =\Psi_{I_n, \mathbf{Q}, \mathbf{0}} \cdot
\Psi_{\phi, \mathbf{I_m}, \mathbf{0}}$; hence $\Aut(F_n) \times \GL_m(\ZZ)$ is a subgroup
of $\Aut(G)$ in the natural way.

On the other hand, for every $\phi \in \Aut(F_n)$, every $\mathbf{Q}\in \GL_m(\ZZ)$, and
every $\mathbf{P}\in \mathcal{M}_{n\times m}(\ZZ)$, we have
\begin{equation}\label{eq:conjFn}
(\Psi_{\phi, \mathbf{I_m}, \mathbf{0}})^{-1} \cdot \Psi_{I_n, \mathbf{I_m},\mathbf{P}} \cdot \Psi_{\phi, \mathbf{I_m}, \mathbf{0}} = \Psi_{\phi^{-1}, \mathbf{I_m}, \mathbf{0}} \cdot \Psi_{\phi, \mathbf{I_m},\mathbf{P}} = \Psi_{I_n, \mathbf{I_m},\mathbf{M^{-1}P}},
\end{equation}
where $\mathbf{M}\in \GL_n(\ZZ)$ is the abelianization of $\phi$, and
\begin{equation}\label{eq:conjQ}
(\Psi_{I_n, \mathbf{Q}, \mathbf{0}})^{-1} \cdot \Psi_{I_n, \mathbf{I_m},\mathbf{P}} \cdot \Psi_{I_n, \mathbf{Q}, \mathbf{0}} = \Psi_{I_n, \mathbf{Q^{-1}}, \mathbf{0}} \cdot \Psi_{I_n, \mathbf{Q},\mathbf{PQ}} = \Psi_{I_n, \mathbf{I_m},\mathbf{PQ}}.
\end{equation}
In particular, $\mathcal{M}_{n\times m}(\ZZ)$ is a normal subgroup of $\Aut (G)$. But
$\Aut(F_n)$, $\GL_m(\ZZ)$ and $\mathcal{M}_{n\times m}(\ZZ)$ altogether generated the whole
$\Aut (G)$, as can be seen with the equality
\begin{equation} \label{eq:desc autos}
\Psi_{\phi,\mathbf{Q},\mathbf{P}}= \Psi_{I_n, \mathbf{I_m}, \mathbf{PQ^{-1}}}\cdot \Psi_{I_n, \mathbf{Q},
\mathbf{0}} \cdot \Psi_{\phi,\mathbf{I_m},\mathbf{0}}.
\end{equation}
Thus, $\Aut(G)$ is isomorphic to the semidirect product $\mathcal{M}_{n\times m}(\ZZ)
\rtimes (\Aut(F_n) \times \GL_m(\ZZ))$, with the action of $\Aut(F_n) \times \GL_m(\ZZ)$ on
$\mathcal{M}_{n\times m}(\ZZ)$ given by equations~\eqref{eq:conjFn} and~\eqref{eq:conjQ}.

But it is well known that these three groups are finitely presented: $\mathcal{M}_{n\times
m}(\ZZ) \simeq \ZZ^{nm}$ is free-abelian generated by canonical matrices (with zeroes
everywhere except for one position where there is a 1), $\GL_m(\ZZ)$ is generated by
elementary matrices, and $\Aut(F_n)$ is generated, for example, by the Nielsen
automorphisms (see~\cite{lyndon_combinatorial_2001} for details and full finite
presentations). Therefore, $\Aut(G)$ is also finitely presented (and one can easily obtain
a presentation of $\Aut(G)$ by taking together the generators for $\mathcal{M}_{n\times
m}(\ZZ)$, $\Aut(F_n)$ and $\GL_m(\ZZ)$, and putting as relations those of each of
$\mathcal{M}_{n\times m}(\ZZ)$, $\Aut(F_n)$ and $\GL_m(\ZZ)$, together with the commutators
of all generators from $\Aut(F_n)$ with all generators from $\GL_m(\ZZ)$, and with the
conjugacy relations describing the action of $\Aut(F_n)\times \GL_m(\ZZ)$ on
$\mathcal{M}_{n\times m}(\ZZ)$ analyzed above).
\end{proof}

Finite presentability of $\Aut(G)$ was previously known as a particular case of a more
general result: in~\cite{laurence_generating_1995}, M. Laurence gave a finite family of
generators for the group of automorphisms of any finitely generated partially commutative
group, in terms of the underlying graph. It turns out that, when particularizing this to
free-abelian times free groups, Laurence's generating set for $\Aut(G)$ is essentially the
same as the one obtained here, after deleting some obvious redundancy. Later,
in~\cite{day_peak_2009}, M. Day builts a kind of peak reduction for such groups, from which
he deduces finite presentation for its group of automorphisms. However, our
Theorem~\ref{prop:desc autos} is better in the sense that it provides the explicit
structure of the automorphism group of a free-abelian times free group.

\goodbreak

\section{The subgroup fixed by an endomorphism}\label{sec:fix}

In this section we shall study when the subgroup fixed by an endomorphism of $\ZZ^m \times
F_n$ is finitely generated and, in this case, we shall consider the problem of
algorithmically computing a basis for it. We will consider the following two problems.

\begin{problem}[\textbf{Fixed Point Problem, $\FPP_\mathbf{a}(G)$}]
Given an automorphism $\Psi$ of $G$ (by the images of the generators), decide whether $\Fix
\Psi$ is finitely generated and, if so, compute a set of generators for it.
\end{problem}

\begin{problem}[\textbf{Fixed Point Problem, $\FPP_\mathbf{e}(G)$}]
Given an endomorphism $\Psi$ of $G$ (by the images of the generators), decide whether $\Fix
\Psi$ is finitely generated and, if so, compute a set of generators for it.
\end{problem}

Of course, the fixed point subgroup of an arbitrary endomorphism of $\ZZ^m$ is finitely
generated, and the problems $\FPP_\mathbf{e}(\ZZ^m)$ and $\FPP_\mathbf{a}(\ZZ^m)$ are
clearly solvable, just reducing to solve the corresponding systems of linear equations.

Again, the case of free groups is much more complicated. Gersten showed
in~\cite{gersten_fixed_1987} that $\rank(\Fix \phi)<\infty$ for every automorphism $\phi
\in \Aut(F_n)$, and Goldstein and Turner~\cite{goldstein_fixed_1986} extended this result
to arbitrary endomorphisms of $F_n$.

About computability, O. Maslakova published~\cite{maslakova_fixed_2003} in 2003, giving an
algorithm to compute a free basis for $\Fix \phi$, where $\phi \in \Aut (F_n)$. After its
publication, the arguments were found to be incorrect. An attempt to fix
them and provide a correct solution to $\FPP_\mathbf{a}(F_n)$ has been recently made by O.
Bogopolski and O. Maslakova in the preprint~\cite{bogopolski_basis_2012} not yet published (see the
beginning of page 3); here, the arguments are quite involved and difficult, making strong
and deep use of the theory of train tracks. It is worth mentioning at this point that, this
problem was previously solved in some special cases with much simpler arguments and
algorithms (see, for example Cohen and Lustig~\cite{marshall_m._cohen_dynamics_1989} for positive automorphisms,
Turner~\cite{turner_finding_1995} for special irreducible automorphisms, and Bogopolski~\cite{bogopolski_classification_2000} for the
case $n=2$). On the other hand, the problem $\FPP_\mathbf{e}(F_n)$ remains still open in
general.

When one moves to free-abelian times free groups, the situation is even more involved.
Similar to what happens with respect to the Howson property, $\Fix \Psi$ need not be
finitely generated for $\Psi \in \Aut (\ZZ \times F_2)$, and essentially the same example
from Observation~\ref{prop:Fn x Z^n no Howson} can be recycled here: consider the \ti\
automorphism $\Psi$ given by $a\mapsto ta$, $b\mapsto b$, $t\mapsto t$; clearly, $t^rw(a,b)
\mapsto t^{r+|w|_a}w(a,b)$ and so,
 $$
\Fix \Psi =\{ t^r w(a,b) \,\, \vert \,\, |w|_a=0\} =\llangle t,\, b\rrangle =\langle t,
a^{-k}ba^k \,\ (k\in \ZZ) \rangle
 $$
is not finitely generated.

In the present section we shall analyze how is the fixed point subgroup of an endomorphism
of a free-abelian times free group, and we shall give an explicit characterization on when
is it finitely generated. In the case it is, we shall also consider the computability of a
finite basis for the fixed subgroup, and will solve the problems $\FPP_\mathbf{a}(\ZZ^m
\times F_n)$ and $\FPP_\mathbf{e}(\ZZ^m \times F_n)$ modulo the corresponding problems for
free groups, $\FPP_\mathbf{a}(F_n)$ and $\FPP_\mathbf{e}(F_n)$. (Our arguments descend
directly from $\End(\ZZ^m \times F_n)$ to $\End(F_n)$, in such a way that any partial
solution to the free problems can be used to give the corresponding partial solution to the
free-abelian times free problems, see Proposition~\ref{algo-I} below.)

Let us distinguish the two types of endomorphisms according to
Proposition~\ref{prop:classificacio endos} (and starting with the easier \tii\ ones).

\begin{prop}\label{prop:fix fg typeII}
Let $G=\ZZ^m \times F_n$ with $n\neq 1$, and consider a \tii\ endomorphism $\Psi$,
namely
 $$
\Psi = \Psi_{z,\mathbf{l,h,Q,P}} \colon \mathbf{t^a}u\mapsto \mathbf{t^{aQ+uP}}z^{\mathbf{a}\mathbf{l}\tr
+\mathbf{u}\mathbf{h}\tr},
 $$
where $1\neq z\in F_n$ is not a proper power, $\mathbf{Q}\in \mathcal{M}_{m}(\ZZ)$,
$\mathbf{P}\in \mathcal{M}_{n\times m}(\ZZ)$, $\mathbf{0}\neq \mathbf{l}\in \ZZ^m $, and
$\mathbf{h}\in \ZZ^n$. Then, $\Fix \Psi$ is finitely generated, and a basis for $\Fix \Psi$
is algorithmically computable.
\end{prop}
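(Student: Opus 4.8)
The plan is to reduce the equation $(\mathbf{t^a}u)\Psi = \mathbf{t^a}u$ to a finite collection of linear conditions on $\mathbf{a}$ together with a single condition on $u$, and then exploit the fact that a type (II) endomorphism sends the free part into the cyclic subgroup $\langle z\rangle$. Write the fixed point equation explicitly: $\mathbf{t^a}u\in\Fix\Psi$ if and only if $\mathbf{aQ+uP}=\mathbf{a}$ in $\ZZ^m$ and $z^{\mathbf{a}\mathbf{l}\tr+\mathbf{u}\mathbf{h}\tr}=u$ in $F_n$. The second equation already forces $u$ to be a power of $z$, say $u=z^k$ for some $k\in\ZZ$; and then $\mathbf{u}=k\mathbf{z}$, where $\mathbf{z}\in\ZZ^n$ is the abelianization of $z$, while the exponent on the left becomes $\mathbf{a}\mathbf{l}\tr+k\,\mathbf{z}\mathbf{h}\tr$. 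So the free-part equation reads $\mathbf{a}\mathbf{l}\tr+k(\mathbf{z}\mathbf{h}\tr)=k$, i.e. $\mathbf{a}\mathbf{l}\tr=k(1-\mathbf{z}\mathbf{h}\tr)$; and the free-abelian equation becomes $\mathbf{a}(\mathbf{Q}-\mathbf{I}_m)=-k\,\mathbf{z}\mathbf{P}$ (using $\mathbf{u}=k\mathbf{z}$, so $\mathbf{uP}=k\,\mathbf{z}\mathbf{P}$). Thus
\begin{equation*}
\Fix\Psi=\bigl\{\ \mathbf{t^a}z^k\ \bigm|\ (\mathbf{a},k)\in\ZZ^{m+1},\ \mathbf{a}(\mathbf{Q}-\mathbf{I}_m)=-k\,\mathbf{z}\mathbf{P},\ \mathbf{a}\mathbf{l}\tr=k(1-\mathbf{z}\mathbf{h}\tr)\ \bigr\}.
\end{equation*}

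The set $S$ of solutions $(\mathbf{a},k)\in\ZZ^{m+1}$ of this homogeneous linear system is a subgroup of $\ZZ^{m+1}$, hence finitely generated and free-abelian of rank at most $m+1$; a generating set is computable by standard integer linear algebra (Smith normal form, or Hermite form of the kernel). The key structural point is that the map $(\mathbf{a},k)\mapsto\mathbf{t^a}z^k$ is an injective homomorphism from $S\leqslant\ZZ^{m+1}$ into $G$: it is a homomorphism because $z$ is central in $\langle z\rangle\times\ZZ^m$ and the first $m$ coordinates are additive, and it is injective because $\mathbf{t^a}z^k=1$ forces $\mathbf{a}=\mathbf{0}$ and $z^k=1$, hence $k=0$ since $z\neq 1$. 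Therefore $\Fix\Psi$ is isomorphic to $S$, so it is finitely generated; applying Proposition~\ref{prop:bases algorismiques} to the computed generating set $\{\mathbf{t^{a_i}}z^{k_i}\}$ yields a basis for $\Fix\Psi$, as required.

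The computational core — finding $\ZZ$-generators for the solution set of an integer homogeneous linear system — is entirely routine, so there is essentially no serious obstacle here; the only point that needs a moment's care is the bookkeeping between $u$ and its abelianization $\mathbf{u}$ (the exponent of $z$ in the image depends on $\mathbf{u}$, not on $u$ directly), and the observation that the free-part equation $z^{(\cdots)}=u$ forces $u\in\langle z\rangle$ because $\Fix\Psi\subseteq\ZZ^m\times\langle z\rangle$. I would also note explicitly that $\Fix\Psi$ is abelian (it embeds in $\ZZ^m\times\langle z\rangle\cong\ZZ^{m+1}$), so its basis in the sense of Definition~\ref{def:base} has empty free component; one does not even need the full strength of Proposition~\ref{prop:bases algorismiques}, just the linear-algebra part of it, but invoking that proposition is the cleanest way to phrase the conclusion.
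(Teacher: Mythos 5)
Your proposal is correct and follows essentially the same route as the paper: reduce the fixed-point condition to the homogeneous integer linear system $\mathbf{a}(\mathbf{I_m}-\mathbf{Q})=k\,\mathbf{z}\mathbf{P}$, $\mathbf{a}\mathbf{l}\tr+k\,\mathbf{z}\mathbf{h}\tr=k$ on $(\mathbf{a},k)\in\ZZ^{m+1}$, observe that $\Fix\Psi$ is the image of the solution subgroup under $(\mathbf{a},k)\mapsto\mathbf{t^a}z^k$, and solve by integer linear algebra. The only cosmetic difference is that the paper establishes finite generation a priori by noting that $\im\Psi$ is abelian (hence $\Fix\Psi$ embeds in some $\ZZ^{m'}$), whereas you derive it directly from the isomorphism with the solution subgroup; both are fine.
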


\begin{proof}
First note that $\im \Psi$ is an abelian subgroup of $\ZZ^m \times F_n$. Then, by Corollary
\ref{cor:classes isomorfia subgrups}, it must be isomorphic to $\ZZ^{m'}$ for a certain
$m'\leqslant m+1$. Therefore, $\Fix \Psi \leqslant \im (\Psi)$ is isomorphic to a subgroup
of $\ZZ^{m'}$, and thus finitely generated.

According to the definition, an element $\mathbf{t}^\mathbf{a}u$ is fixed by $\Psi$ if and
only if $\mathbf{t^{aQ+uP}}z^{\mathbf{a}\mathbf{l}\tr +\mathbf{u}\mathbf{h}\tr}
=\mathbf{t}^\mathbf{a}u$. For this to be satisfied, $u$ must be a power of $z$, say $u=z^r$
for certain $r\in \ZZ$, and abelianizing we get $\mathbf{u}=r \mathbf{z}$, and the system
of equations
\begin{equation} \label{eq:system fix typeII ab}
\left.
\begin{aligned}
\mathbf{a}\mathbf{l}\tr + r \mathbf{z}\mathbf{h}\tr &= r \\
\mathbf{a}(\mathbf{I_m}-\mathbf{Q})& = r \mathbf{z} \mathbf{P}
\end{aligned}
\ \right\}
\end{equation}
whose set $\mathcal{S}$ of integer solutions $(\mathbf{a},r) \in \ZZ^{m+1}$ describe
precisely the subgroup of fixed points by~$\Psi$:
 $$
\Fix \Psi =\{ \mathbf{t^a} z^r \mid (\mathbf{a},r)\in \mathcal{S} \}.
 $$
By solving~\eqref{eq:system fix typeII ab}, we get the desired basis for $\Fix \Psi$. The
proof is complete.
\end{proof}

\goodbreak

\begin{thm}\label{prop:fix fg typeI}
Let $G=\ZZ^m \times F_n$ with $n\neq 1$, and consider a \ti\ endomorphism $\Psi$,
namely
 $$
\Psi=\Psi_{\phi,\mathbf{Q,P}} \colon \mathbf{t^a}u\mapsto \mathbf{t^{aQ+uP}}\, u\phi,
 $$
where $\phi \in \End(F_n)$, $\mathbf{Q}\in \mathcal{M}_{m}(\ZZ)$, and $\mathbf{P}\in
\mathcal{M}_{n\times m}(\ZZ)$. Let $N=\im (\mathbf{I_m-Q})\cap \im \mathbf{P'}$, where
$\mathbf{P'}$ is the restriction of $\mathbf{P}\colon \ZZ^n \to \ZZ^m$ to $(\Fix \phi
)\rho$, the image of $\Fix \phi \leqslant F_n$ under the global abelianization $\rho \colon
F_n \twoheadrightarrow \ZZ^n$. Then, $\Fix \Psi$ is finitely generated if and only if one
of the following happens: \emph{(i)}~$\Fix \phi =1$; \emph{(ii)} $\Fix \phi$ is cyclic,
$(\Fix \phi )\rho \neq \{ \mathbf{0}\}$, and $N\mathbf{P'}^{-1} =\{ \mathbf{0}\}$; or
\emph{(iii)} $\rank (N)=\rank (\im \mathbf{P'})$.
\end{thm}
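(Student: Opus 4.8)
The plan is to push the whole question through the projection $\pi\colon G\twoheadrightarrow F_n$ and reduce it, via Corollary~\ref{cor:H fg sii Hpi fg}, to finite generation of one explicitly identified subgroup of $\Fix\phi$. Reading $\Fix\Psi$ off the normal form, $\mathbf{t^a}u$ is fixed by $\Psi=\Psi_{\phi,\mathbf{Q,P}}$ exactly when $u\phi=u$ and $\mathbf{aQ+uP}=\mathbf{a}$, i.e.\ $u\in\Fix\phi$ and $\mathbf{a}(\mathbf{I_m-Q})=\mathbf{uP}$; hence
$$(\Fix\Psi)\pi=\{\,u\in\Fix\phi \mid \mathbf{uP}\in\im(\mathbf{I_m-Q})\,\}=\theta^{-1}(N),$$
where $\theta\colon\Fix\phi\to\ZZ^m$ is the homomorphism $u\mapsto\mathbf{uP}$ (with $\mathbf{u}$ the abelianization of $u$). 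Now $\theta$ is $\rho|_{\Fix\phi}$ followed by $\mathbf{P'}$, so $\im\theta=\im\mathbf{P'}$ and $N=\im(\mathbf{I_m-Q})\cap\im\mathbf{P'}=\im(\mathbf{I_m-Q})\cap\im\theta$; in particular $\theta$ maps $\Fix\phi$ onto $\im\mathbf{P'}$, whence $[\Fix\phi:\theta^{-1}(N)]=[\im\mathbf{P'}:N]$, a finite index precisely when $\rank N=\rank(\im\mathbf{P'})$, i.e.\ precisely in case (iii). By Corollary~\ref{cor:H fg sii Hpi fg}, $\Fix\Psi$ is finitely generated iff $\theta^{-1}(N)$ is, and I analyse this according to the structure of $K:=\Fix\phi$, which is finitely generated by the Goldstein--Turner theorem~\cite{goldstein_fixed_1986}.

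If $K=1$ (case (i)) then $\Fix\Psi\leqslant\ZZ^m$ is finitely generated (and $N=\im\mathbf{P'}=\{\mathbf{0}\}$, so (iii) holds too). If $K$ is non-cyclic, then $K$ is a finitely generated non-abelian free group, so it does not embed in the abelian group $\ZZ^m$ and hence $\ker\theta\neq1$; moreover $K/\ker\theta\isom\im\theta$ is abelian, so every subgroup of $K$ containing $\ker\theta$ — in particular $\theta^{-1}(N)$ — is normal in $K$. I then invoke the classical fact already used in the proof of Lemma~\ref{int-fg} (see~\cite{lyndon_combinatorial_2001}): a normal subgroup of a finitely generated non-abelian free group is finitely generated if and only if it is trivial or of finite index. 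Since $\theta^{-1}(N)\supseteq\ker\theta\neq1$, it is finitely generated iff it has finite index in $K$, i.e.\ iff $\rank N=\rank(\im\mathbf{P'})$, which is (iii); as (i) and (ii) are vacuous here, this settles the non-cyclic case.

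It remains to treat $K=\langle g\rangle$ infinite cyclic: here $\theta^{-1}(N)$ is automatically cyclic, hence finitely generated, so $\Fix\Psi$ is always finitely generated and it is enough to verify that one of (i), (ii), (iii) always holds. If $(\Fix\phi)\rho=\{\mathbf{0}\}$ then $\im\mathbf{P'}=N=\{\mathbf{0}\}$ and (iii) holds trivially. Otherwise $(\Fix\phi)\rho$ is infinite cyclic and $[(\Fix\phi)\rho:N\mathbf{P'}^{-1}]=[\im\mathbf{P'}:N]$: if $N\mathbf{P'}^{-1}=\{\mathbf{0}\}$ then $\ker\mathbf{P'}=\{\mathbf{0}\}$, so $\mathbf{P'}$ is injective and, since $N\subseteq\im\mathbf{P'}$, also $N=\{\mathbf{0}\}$, giving (ii); if $N\mathbf{P'}^{-1}\neq\{\mathbf{0}\}$ it is a nontrivial, hence finite-index, subgroup of the infinite cyclic group $(\Fix\phi)\rho$, so $[\im\mathbf{P'}:N]<\infty$ and (iii) holds. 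I expect the main obstacle to be exactly this bookkeeping in the cyclic case — checking that (ii) and (iii) between them cover every infinite-cyclic configuration, which hinges on the slightly delicate point that $N\mathbf{P'}^{-1}=\{\mathbf{0}\}$ forces $N=\{\mathbf{0}\}$ (recalling that $\mathbf{P'}^{-1}$ denotes a full preimage under a map a priori not injective); the non-cyclic case is immediate once one observes that $\theta^{-1}(N)$ is normal and nontrivial.
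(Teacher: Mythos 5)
Your proposal is correct and follows essentially the same route as the paper: reduce to $(\Fix\Psi)\pi$ via Corollary~\ref{cor:H fg sii Hpi fg}, identify it as the full preimage of $N$ under the composite $\Fix\phi\to\ZZ^n\to\ZZ^m$, and exploit that this preimage is normal in $\Fix\phi$ together with the fact that normal subgroups of finitely generated free groups are finitely generated iff trivial or of finite index, transferring indices by Lemma~\ref{lem:index de subgrups a traves de epimorfismes}~(ii). The only cosmetic difference is that you organize the argument by the isomorphism type of $\Fix\phi$ (trivial, cyclic, non-cyclic) where the paper works with the dichotomy ``$(\Fix\Psi)\pi$ trivial or of finite index'' and characterizes triviality via injectivity of $\rho'$; the content is the same.
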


\begin{proof}
An element $\mathbf{t}^\mathbf{a}u$ is fixed by $\Psi$ if and only if
$\mathbf{t}^{\mathbf{aQ+uP}}u\phi =\mathbf{t}^\mathbf{a}u$, i.e.\ if and only if
 \begin{equation*}
\left.
\begin{aligned}
u\phi &= u \\
\mathbf{a}(\mathbf{I_m}-\mathbf{Q})&=\mathbf{uP}
\end{aligned}
\ \right\}
\end{equation*}
That is,
\begin{equation}\label{fixPsi}
\Fix \Psi =\{ \mathbf{t}^\mathbf{a}u \in G \mid u \in \Fix \phi \text{\, and \,} \mathbf{a}(\mathbf{I_m}-\mathbf{Q})=
\mathbf{uP}\},
\end{equation}
where $\mathbf{u}=u\rho$, and $\rho \colon F_n \twoheadrightarrow \ZZ^n$ is the
abelianization map. As we have seen in Corollary~\ref{cor:H fg sii Hpi fg}, $\Fix \Psi$ is
finitely generated if and only if its projection to the free part
\begin{equation}\label{fix}
(\Fix \Psi)\pi =\Fix \phi \, \cap \, \{ u\in F_n \mid \mathbf{uP}\in \im(\mathbf{I_m}-\mathbf{Q})\}
\end{equation}
is so. Now (identifying integral matrices $\mathbf{A}$ with the corresponding linear
mapping $\mathbf{v} \mapsto \mathbf{vA}$, as usual), let $M$ be the image of
$\mathbf{I_m}-\mathbf{Q}$, and consider its preimage first by $\mathbf{P}$ and then by
$\rho$, see the following diagram:
\begin{equation*} \index{punts fixos per un automorfisme!diagrames}
\xy
(55.5,-9.1)*+{= \im (\mathbf{I_m}-\mathbf{Q}) .};
(0,-4.5)*+{\rotatebox[origin=c]{90}{$\normaleq$}};
(22,-4.5)*+{\rotatebox[origin=c]{90}{$\normaleq$}};
(42,-4.5)*+{\rotatebox[origin=c]{90}{$\normaleq$}};
(-4,0)*+{\leqslant};
(-10.5,0)*+{\Fix \phi};
{\ar@{->>}^-{\rho} (0,0)*++{F_{n}}; (22,0)*++{\ZZ^{n}}};
{\ar^-{\mathbf{P}} (22,0)*++++{}; (42,0)*++{\ZZ^m}};
{\ar@(ur,ul)_{\mathbf{I_m-Q}} (42,0)*++{}; (42,0)*++{} };
{\ar@{|->} (42,-9)*++{M }; (22,-9)*++{M \mathbf{P}^{-1}}};
{\ar@{|->} (22,-9)*++++++{}; (0,-9)*+{M \mathbf{P}^{-1} \rho^{-1} }};
\endxy
\end{equation*}
Equation~(\ref{fix}) can be rewritten as
\begin{equation}\label{fix2}
(\Fix \Psi) \pi =\Fix \phi \, \cap \, M\mathbf{P}^{-1} \rho^{-1}.
\end{equation}
However, this description does not show whether $\Fix \Psi$ is finitely generated because
$\Fix \phi$ is in fact finitely generated, but $M\mathbf{P}^{-1}\rho^{-1}$ is not in
general. We shall avoid the intersection with $\Fix \phi$ by reducing $M$ to a certain
subgroup. Let $\rho'$ be the restriction of $\rho$ to $\Fix \phi$ (not to be confused with
the abelianization map of the subgroup $\Fix \phi$ itself), let $\mathbf{P'}$ be the
restriction of $\mathbf{P}$ to $\im \rho'$, and let $N=M\cap \im \mathbf{P'}$, see the
following diagram:
\begin{equation*}\index{punts fixos per un automorfisme!diagrames}
 \xy
 (65,0)*+{\geqslant M = \im (\mathbf{I_m}-\mathbf{Q})};
 (58,-18.1)*+{= M \cap \im \mathbf{P'} .};
 (0,-4.5)*+{\rotatebox[origin=c]{90}{$\leqslant$}};
 (24,-4.5)*+{\rotatebox[origin=c]{90}{$\normaleq$}};
 (46,-4.5)*+{\rotatebox[origin=c]{90}{$\normaleq$}};
 {\ar@{->>}^-{\rho} (0,0)*++{F_{n}}; (24,0)*++{\ZZ^{n}}};
 {\ar^-{\mathbf{P}} (24,0)*++++{}; (46,0)*++{\ZZ^m}};
 {\ar@{->>}^-{\rho'} (0,-9)*++{\Fix \phi}; (24,-9)*++{\im \rho'}};
 {\ar@{->>}^-{\mathbf{P'}} (24,-9)*+++++{}; (46,-9)*++{\im \mathbf{P'}}};
 (0,-13.5)*+{\rotatebox[origin=c]{90}{$\normaleq$}};
 (24,-13.5)*+{\rotatebox[origin=c]{90}{$\normaleq$}};
 (46,-13.5)*+{\rotatebox[origin=c]{90}{$\normaleq$}};
 {\ar@(ur,ul)_{\mathbf{I_m-Q}} (46,0)*++{}; (46,0)*++{} };
 {\ar@{|->} (46,-18)*+++{N }; (24,-18)*++{N \mathbf{P'}^{-1}}};
 {\ar@{|->} (24,-18)*+++++++{}; (0,-18)*+{N \mathbf{P'}^{-1} \rho'^{-1} }};
 (0,-23)*+{\rotatebox[origin=c]{90}{$=$}};
 (0,-28)*+{(\Fix \Psi) \pi};
 \endxy
\end{equation*}
Equation~(\ref{fix2}) then rewrites into
 $$
(\Fix \Psi )\pi =N\mathbf{P'}^{-1}\rho'^{-1}.
 $$
Now, since $N\mathbf{P'}^{-1}\rho'^{-1}$ is a normal subgroup of $\Fix \phi$ (not, in
general, of $F_n$), it is finitely generated if and only if it is either trivial, or of
finite index in $\Fix \phi$.

Note that $\rho'$ is injective (and thus bijective) if and only if $\Fix \phi$ is either
trivial, or cyclic not abelianizing to zero (indeed, for this to be the case we cannot have
two freely independent elements in $\Fix \phi$ and so, $\rank (\Fix \phi)\leqslant 1$).
Thus, $(\Fix \Psi )\pi =N\mathbf{P'}^{-1}\rho'^{-1}=1$ if and only if $\Fix \phi$ is
trivial or cyclic not abelianizing to zero, and $N\mathbf{P'}^{-1}=\{\mathbf{0}\}$.

On the other side, by Lemma~\ref{lem:index de subgrups a traves de epimorfismes}~(ii),
$N\mathbf{P'}^{-1}\rho'^{-1}$ has finite index in $\Fix \phi$ if and only if $N$ has finite
index in $\im \mathbf{P'}$ i.e.\ if and only if $\rank (N)=\rank (\im \mathbf{P'})$.
\end{proof}

\begin{exm}
Let us analyze again the example given at the beginning of this section, under the light of
the Theorem~\ref{prop:fix fg typeI}. We considered the automorphism $\Psi$ of $\ZZ\times
F_2=\langle t \mid \, \rangle \times \langle a,b \mid \, \rangle$ given by $a\mapsto ta$,
$b\mapsto b$ and $t\mapsto t$, i.e.\ $\Psi =\Psi_{I_2, \mathbf{I_1}, \mathbf{P}}$, where
$\mathbf{P}$ is the $2\times 1$ matrix $\mathbf{P=(1,0)}\tr$. It is clear that $\Fix
(I_2)=F_2$ and so, conditions~(i) and~(ii) from Proposition~\ref{prop:fix fg typeI} do not
hold. Furthermore, $\rho'=\rho$, $\mathbf{P'=P}$, $M=\im (\mathbf{0})=\{ \mathbf{0}\}$,
$N=\{ \mathbf{0}\}$, while $\im \mathbf{P'}=\mathbb{Z}$; hence, condition~(iii) from
Theorem~\ref{prop:fix fg typeI} does not hold either, according to the fact that $\Fix
\Psi$ is not finitely generated.
\end{exm}

\goodbreak

Finally, the proof of Theorem~\ref{prop:fix fg typeI} is explicit enough to allow us to
make the whole thing algorithmic:  given a \ti\ endomorphism $\Psi
=\Psi_{\phi,\mathbf{Q,P}} \in \End (\ZZ^m \times F_n)$, the decision on whether $\Fix \Psi$
if finitely generated or not, and the computation of a basis for it in case it is, can be
made effective assuming we have a procedure to compute a (free) basis for $\Fix \phi$:

\begin{prop}\label{algo-I}
Let $G=\ZZ^m \times F_n$ with $n\neq 1$, and let $\Psi=\Psi_{\phi,\mathbf{Q,P}}$ be a
\ti\ endomorphism of $G$. Assuming a (finite and free) basis for $\Fix \phi$ is given
to us, we can algorithmically decide whether $\Fix \Psi$ is finitely generated or not and,
in case it is, compute a basis for it.
\end{prop}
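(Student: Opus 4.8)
The plan is to make every step in the proof of Theorem~\ref{prop:fix fg typeI} effective; the one ingredient we are allowed to use as a black box is precisely the given free basis of $\Fix\phi$, say $\{y_1,\ldots,y_k\}$ (so $k=\rank(\Fix\phi)$ is known to us). First I would abelianize the $y_j$'s, obtaining a finite generating set and hence, by integral linear algebra (Smith normal form, as at the beginning of Section~\ref{fi}), an explicit basis of $\im\rho'=(\Fix\phi)\rho\leqslant\ZZ^n$; right-multiplying these generators by $\mathbf{P}$ gives generators for $\im\mathbf{P'}\leqslant\ZZ^m$. In the same routine way I would compute $M=\im(\mathbf{I_m-Q})$, the intersection $N=M\cap\im\mathbf{P'}$, the ranks $\rank(N)$ and $\rank(\im\mathbf{P'})$, and --- by solving one linear system --- whether $N\mathbf{P'}^{-1}=\{\mathbf{0}\}$. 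With this data the trichotomy of Theorem~\ref{prop:fix fg typeI} becomes a finite check: $\Fix\Psi$ is finitely generated if and only if $k=0$, or ($k=1$, $\im\rho'\neq\{\mathbf{0}\}$ and $N\mathbf{P'}^{-1}=\{\mathbf{0}\}$), or $\rank(N)=\rank(\im\mathbf{P'})$; if none of these holds we report that $\Fix\Psi$ is not finitely generated and stop.

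Assume now $\Fix\Psi$ is finitely generated. By~\eqref{eq:factoritzacio subgrup} a basis of $\Fix\Psi$ is obtained by putting together an abelian basis of $\Fix\Psi\cap\ZZ^m$, a free basis of $(\Fix\Psi)\pi$, and a splitting $\alpha$ of $\pi_{\mid\Fix\Psi}$. From~\eqref{fixPsi} we have $\Fix\Psi\cap\ZZ^m=\ker(\mathbf{I_m-Q})$, whose basis is a linear-algebra computation. For the free part we use~\eqref{fix}: the subgroup $(\Fix\Psi)\pi=\{u\in\Fix\phi\mid\mathbf{uP}\in M\}$ is the kernel of the homomorphism $\Fix\phi\to\im\mathbf{P'}/N$, $u\mapsto\mathbf{uP}+N$, whose target is a finite abelian group (finite exactly because we are in case (iii), its order computable from a Smith form). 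In the two remaining cases, $k=0$ or ($k=1$, $\im\rho'\neq\{\mathbf{0}\}$, $N\mathbf{P'}^{-1}=\{\mathbf{0}\}$), the proof of Theorem~\ref{prop:fix fg typeI} shows that $(\Fix\Psi)\pi$ is trivial, so $\Fix\Psi=\ker(\mathbf{I_m-Q})$ and we are done.

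Otherwise $(\Fix\Psi)\pi$ is a finite-index (normal) subgroup of the free group $\Fix\phi$, and I would compute a free basis for it by the Stallings--Schreier graph construction already used in the proof of Theorem~\ref{thm:ip}: take the elements of the finite group $\im\mathbf{P'}/N$ as vertices and, from the vertex $c$, draw an edge labelled $y_j$ to the vertex $c+\mathbf{y_j}\mathbf{P}+N$; since $\im\mathbf{P'}$ is generated by the $\mathbf{y_j}\mathbf{P}$ the resulting graph is connected with exactly $[\Fix\phi:(\Fix\Psi)\pi]$ vertices, hence it is the core (Stallings) graph of $(\Fix\Psi)\pi$, and choosing a spanning tree reads off a free basis $\{v_1,\ldots,v_r\}$ of $(\Fix\Psi)\pi$, each $v_i$ being an explicit word in the $y_j$'s. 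Since $v_i\in(\Fix\Psi)\pi$ we have $\mathbf{v_iP}\in M=\im(\mathbf{I_m-Q})$, so we can solve $\mathbf{a_i}(\mathbf{I_m-Q})=\mathbf{v_iP}$ over $\ZZ$; then $\mathbf{t^{a_i}}v_i\in\Fix\Psi$ projects to $v_i$, so $v_i\mapsto\mathbf{t^{a_i}}v_i$ extends to the wanted splitting $\alpha$ and $\{\mathbf{t^{a_1}}v_1,\ldots,\mathbf{t^{a_r}}v_r\}$ is a free basis of $(\Fix\Psi)\pi\alpha$. Joining it with an abelian basis of $\ker(\mathbf{I_m-Q})$ yields a basis of $\Fix\Psi$, up to the cosmetic adjustment of Definition~\ref{def:base} in the degenerate case $r=1$ (which forces $\Fix\phi$ itself to have rank $1$ and $\Fix\Psi$ to be abelian). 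Every step above is integral linear algebra or one Stallings-graph computation, so there is no real obstacle here beyond the initial one --- which is exactly why the statement has to be phrased conditionally on a basis for $\Fix\phi$ being available.
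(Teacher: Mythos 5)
Your proposal is correct and follows essentially the same route as the paper's proof: check the trichotomy of Theorem~\ref{prop:fix fg typeI}, obtain the abelian part as $\ker(\mathbf{I_m}-\mathbf{Q})$, build the Schreier coset graph of $(\Fix \Psi)\pi$ inside $\Fix \phi$ (with respect to the given free basis) to extract a free basis, and lift it to a splitting by solving $\mathbf{a}(\mathbf{I_m}-\mathbf{Q})=\mathbf{uP}$. Your only deviation is cosmetic: you index the Schreier graph's vertices directly by the finite quotient $\im \mathbf{P'}/N$ rather than by coset representatives identified via repeated membership tests, which amounts to the same computation.
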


\begin{proof}
Let $\{ v_1,\ldots ,v_p \}$ be the (finite and free) basis for $\Fix \phi \leqslant F_n$
given to us in the hypothesis.

Theorem~\ref{prop:fix fg typeI} describes how is $\Fix \Psi$ and when is it finitely
generated. Assuming the notation from the proof there, we can compute abelian bases for
$N\leqslant \im \mathbf{P'}\leqslant \ZZ^m$ and $N\mathbf{P'}^{-1}\leqslant \im
\rho'\leqslant \ZZ^n$. Then, we can easily check whether any of the following three
conditions hold:
\begin{itemize}
\item[(i)] $\Fix \phi$ is trivial,
\item[(ii)] $\Fix \phi =\langle z\rangle$, $z\rho \neq \mathbf{0}$ and
    $N\mathbf{P'}^{-1}=\{ \mathbf{0}\}$,
\item[(iii)] $\rank (N)=\rank (\im \mathbf{P'})$.
\end{itemize}
If (i), (ii) and (iii) fail then $\Fix \Psi$ is not finitely generated and we are done.
Otherwise, $\Fix \Psi$ is finitely generated and it remains to compute a basis.
From~\eqref{eq:factoritzacio subgrup}, we have
 $$
\Fix \Psi =\bigl((\Fix \Psi )\cap \ZZ^m \bigr)\, \times \, (\Fix \Psi)\pi \alpha,
 $$
where $\Fix \Psi \overset{\alpha}{\longleftarrow} (\Fix \Psi )\pi$ is any splitting of
$\pi_{\mid \Fix \Psi} \colon \Fix \Psi\twoheadrightarrow (\Fix \Psi )\pi$. We just have to
compute a basis for each part and put them together (after algorithmically computing some
splitting $\alpha$). Regarding the abelian part, equation~(\ref{fixPsi}) tells us that
 $$
(\Fix \Psi )\cap \ZZ^m =\{ \mathbf{t}^\mathbf{a} \mid \mathbf{a}(\mathbf{I_m}-\mathbf{Q})=\mathbf{0} \},
 $$
and we can easily find an abelian basis for it by just computing $\ker (\mathbf{I_m-Q})$.
\goodbreak
Consider now the free part. In cases (i) and (ii), $(\Fix \Psi )\pi =1$ and there is
nothing to compute. Note that, in these cases, $\Fix \Psi$ is then an abelian subgroup of
$\ZZ^m \times F_n$.

Assume case (iii), i.e.\ $\rank (N)=\rank (\im \mathbf{P'})$. In this situation, $N$ has
finite index in $\im \mathbf{P'}$ and so, $N\mathbf{P'}^{-1}$ has finite index in $\im
\rho'$; let us compute a set of coset representatives of $\im \rho'$
modulo~$N\mathbf{P'}^{-1}$,
 $$
\im \rho'=(N\mathbf{P'}^{-1})\mathbf{c_1}\sqcup \cdots \sqcup (N\mathbf{P'}^{-1})\mathbf{c_q},
 $$
(see Section~\ref{fi}). Now, according to Lemma~\ref{lem:index de subgrups a traves de
epimorfismes}~(b), we can transfer this partition via $\rho'$ to obtain a system of right
coset representatives of $\Fix \phi$ modulo $(\Fix \Psi )\pi =N\mathbf{P'}^{-1}\rho'^{-1}$,
\begin{equation}\label{cosets}
\Fix \phi =(N\mathbf{P'}^{-1}\rho'^{-1})z_1\sqcup \cdots \sqcup (N\mathbf{P'}^{-1}\rho'^{-1})z_q.
\end{equation}
To compute the $z_i$'s, note that $\mathbf{v_1}=v_1\rho',\, \ldots ,\,
\mathbf{v_p}=v_p\rho'$ generate $\im \rho'$, write each $\mathbf{c_i}\in \im \rho'$ as a
(non necessarily unique) linear combination of them, say
$\mathbf{c_i}=c_{i,1}\mathbf{v_1}+\cdots +c_{i,p}\mathbf{v_p}$, $i\in [q]$, and take $z_i
=v_1^{c_{i,1}} v_2^{c_{i,1}}\cdots v_{p}^{c_{i,p}}\in \Fix \phi$.

Now, construct a free basis for $N\mathbf{P'}^{-1}\rho'^{-1} =(\Fix \Psi )\pi$ following
the first of the two alternatives at the end of the proof of Theorem~\ref{thm:ip} (the
second one does not work here because $\rho'$ is not the abelianization of the subgroup
$\Fix \phi$, but the restriction there of the abelianization of $F_n$):

Build the Schreier graph $\mathcal{S}(N\mathbf{P'}^{-1}\rho'^{-1})$ for
$N\mathbf{P'}^{-1}\rho'^{-1}\leqslant \Fix \phi$ with respect to $\{v_1, \ldots ,v_{p}\}$,
in the following way: consider the graph with the cosets of~(\ref{cosets}) as vertices, and
with no edge. Then, for every vertex $(N\mathbf{P'}^{-1}\rho'^{-1})z_i$ and every letter
$v_j$, add an edge labeled $v_j$ from $(N\mathbf{P'}^{-1}\rho'^{-1})z_i$ to
$(N\mathbf{P'}^{-1}\rho'^{-1})z_i v_j$, algorithmically identified among the available
vertices by repeatedly using the membership problem for $N\mathbf{P'}^{-1}\rho'^{-1}$ (note
that we can easily do this by abelianizing the candidate and checking whether it belongs to
$N\mathbf{P'}^{-1}$). Once we have run over all $i,j$, we shall get the full graph
$\mathcal{S}(N\mathbf{P'}^{-1}\rho'^{-1})$, from which we can easily obtain a free basis
for $N\mathbf{P'}^{-1}\rho'^{-1} =(\Fix \Psi )\pi$.

Finally, having a free basis for $(\Fix \Psi )\pi$, we can easily construct an splitting
${\Fix \Psi \overset{\alpha}{\longleftarrow} (\Fix \Psi )\pi}$ for $\pi_{\mid \Fix \Psi}
\colon \Fix \Psi\twoheadrightarrow (\Fix \Psi )\pi$ by just computing, for each generator
$u\in (\Fix \Psi )\pi$, a preimage $\mathbf{t^a} u\in \Fix \Psi$, where $\mathbf{a}\in
\ZZ^m$ is a completion found by solving the system of equations $\mathbf{a(I_m-Q)=uP}$
(see~\eqref{fixPsi}).

This completes the proof.
\end{proof}

Bringing together Propositions~\ref{prop:fix fg typeII} and \ref{algo-I} and
Theorem~\ref{prop:fix fg typeI}, we get the following.

\begin{cor} \label{thm:punts fixos d'un endomorfisme}
For $m\geqslant 1$ and $n\geqslant 2$,
\begin{itemize}
\item[\emph{(i)}] if $\FPP_\mathbf{a}(F_n)$ is solvable then
    $\FPP_\mathbf{a}(\mathbb{Z}^m \times F_n)$ is also solvable.
\item[\emph{(ii)}] if $\FPP_\mathbf{e}(F_n)$ is solvable then
    $\FPP_\mathbf{e}(\mathbb{Z}^m \times F_n)$ is also solvable. $\Box$
\end{itemize}
\end{cor}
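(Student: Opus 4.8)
The plan is to reduce, algorithmically, the computation of $\Fix\Psi$ for $\Psi\in\End(G)$ (resp.\ $\Psi\in\Aut(G)$), $G=\ZZ^m\times F_n$, to the computation of $\Fix\phi$ for a suitable $\phi\in\End(F_n)$ (resp.\ $\phi\in\Aut(F_n)$), invoking the structural results already established. The first step is a preprocessing phase: given $\Psi$ by the images of the generators $x_i$ and $t_j$, compute the normal forms $x_i\Psi=\mathbf{t}^{\mathbf{p_i}}w_i$ and $t_j\Psi=\mathbf{t}^{\mathbf{q_j}}z_j$ (normal forms are effectively computable, see Section~\ref{sec:ffab}). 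Using the decidable word problem in $F_n$, check whether $z_j=1$ for every $j\in[m]$. By Proposition~\ref{prop:classificacio endos} this decides, algorithmically, whether $\Psi$ is of \ti\ or of \tii, and exhibits the corresponding parameters: in the \ti\ case $\Psi=\Psi_{\phi,\mathbf{Q},\mathbf{P}}$ with $\phi\colon x_i\mapsto w_i$, $\mathbf{P}$ the matrix with rows $\mathbf{p_i}$, and $\mathbf{Q}$ the matrix with rows $\mathbf{q_j}$; in the \tii\ case, following the proof of Proposition~\ref{prop:classificacio endos}, one reads off $z$ (a non-proper-power generator of the cyclic subgroup of $F_n$ generated by the $w_i$'s and $z_j$'s), the exponents $\mathbf{h}\in\ZZ^n$ and $\mathbf{l}\in\ZZ^m$, and the matrices $\mathbf{Q}$, $\mathbf{P}$.

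If $\Psi$ is of \tii, then Proposition~\ref{prop:fix fg typeII} applies directly: $\Fix\Psi$ is always finitely generated, and a basis is computed by solving the integral linear system~\eqref{eq:system fix typeII ab}; no free-group oracle is needed. If $\Psi$ is of \ti, $\Psi=\Psi_{\phi,\mathbf{Q},\mathbf{P}}$, then I would hand $\phi$ to the assumed free-group procedure to obtain a finite free basis of $\Fix\phi$, and then feed this basis into Proposition~\ref{algo-I}, which effectively decides whether $\Fix\Psi$ is finitely generated (via the criterion of Theorem~\ref{prop:fix fg typeI}) and, in the affirmative case, computes a basis. This settles the endomorphism case~(ii): any $\Psi\in\End(G)$ is of \ti\ or \tii; in the first subcase $\phi$ is an arbitrary endomorphism of $F_n$, so the oracle $\FPP_\mathbf{e}(F_n)$ suffices, and the second subcase is unconditional.

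For the automorphism case~(i), the extra ingredient is Proposition~\ref{prop:caract mono}(iii): every automorphism of $G=\ZZ^m\times F_n$ with $n\geqslant 2$ is necessarily of \ti, $\Psi=\Psi_{\phi,\mathbf{Q},\mathbf{P}}$, with $\phi\in\Aut(F_n)$ and $\mathbf{Q}\in\GL_m(\ZZ)$. Hence the \tii\ branch never occurs, and the $\phi$ extracted in the preprocessing step is an \emph{automorphism} of $F_n$; the oracle $\FPP_\mathbf{a}(F_n)$ then produces a free basis of $\Fix\phi$, and Proposition~\ref{algo-I} finishes as before.

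There is no genuine obstacle here: the real content lies in the classification of endomorphisms (Proposition~\ref{prop:classificacio endos}), the finite-generation criterion (Theorem~\ref{prop:fix fg typeI}), and its algorithmic counterpart (Proposition~\ref{algo-I}). The only points needing a line of justification are (a) that the \ti/\tii\ dichotomy and the extraction of the defining data are effective, which follows from the computability of normal forms and the decidability of the word problem in $F_n$; and (b) that the correct free-group oracle is invoked in each case, i.e.\ $\FPP_\mathbf{a}(F_n)$ for automorphisms (since then $\phi\in\Aut(F_n)$) and $\FPP_\mathbf{e}(F_n)$ for general endomorphisms.
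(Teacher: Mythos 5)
Your proposal is correct and follows essentially the same route as the paper, which obtains the corollary by combining Proposition~\ref{prop:fix fg typeII}, Theorem~\ref{prop:fix fg typeI} and Proposition~\ref{algo-I}; the only material you add is the (straightforward but worth noting) effective preprocessing step distinguishing \ti\ from \tii\ and extracting the defining data, together with the observation via Proposition~\ref{prop:caract mono}(iii) that automorphisms are always of \ti\ with $\phi\in\Aut(F_n)$, both of which the paper leaves implicit.
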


\goodbreak

To close this section, we point the reader to some very recent results related to fixed
subgroups of endomorphisms of partially commutative groups. In~\cite{rodaro_fixed_2012}, E. Rodaro, P.V.
Silva and M. Sykiotis characterize which partially commutative groups $G$ satisfy that
$\Fix \Psi$ is finitely generated for every $\Psi \in \End (G)$ (and, of course,
free-abelian times free groups are not included there); they also provide similar results
concerning automorphisms.

\section{The Whitehead problem}\index{problemes algor\'{\i}smics!problemes de Whitehead}
\label{sec:Wh}

J. Whitehead, back in the 30's of the last century, gave an
algorithm~\cite{whitehead_equivalent_1936} to decide, given two elements $u$ and $v$ from a
finitely generated free group $F_n$, whether there exists an automorphism $\phi \in \Aut
(F_n)$ sending one to the other, $v=u\phi$. Whitehead's algorithm uses a (today) very
classical piece of combinatorial group theory technique called `peak reduction', see
also~\cite{lyndon_combinatorial_2001}. Several variations of this problem (like replacing
$u$ and $v$ by tuples of words, relaxing equality to equality up to conjugacy, adding
conditions on the conjugators, replacing words by subgroups, replacing automorphisms to
monomorphisms or endomorphisms, etc), as well as extensions of all these problems to other
families of groups, can be found in the literature, all of them generally known as the
\emph{Whitehead problem}. Let us consider here the following ones for an arbitrary finitely
generated group~$G$:

\begin{problem}[\textbf{Whitehead Problem, $\WhP_\mathbf{a}(G)$}]
Given two elements $u,\, v\in G$, decide whether there exist an automorphism $\phi$ of $G$
such that $u\phi =v$, and, if so, find one (giving the images of the generators).
\end{problem}

\begin{problem}[\textbf{Whitehead Problem, $\WhP_\mathbf{m}(G)$}]
Given two elements $u,\, v\in G$, decide whether there exist a monomorphism $\phi$ of $G$
such that $u\phi =v$, and, if so, find one (giving the images of the generators).
\end{problem}

\begin{problem}[\textbf{Whitehead Problem, $\WhP_\mathbf{e}(G)$}]
Given two elements $u,\, v\in G$, decide whether there exist an endomorphism $\phi$ of $G$
such that $u\phi =v$, and, if so, find one (giving the images of the generators).
\end{problem}

In this last section we shall solve these three problems for free-abelian times free
groups. We note that, very recently, a new version of the classical peak-reduction theorem
has been developed by M. Day~\cite{day_full-featured_2012} for an arbitrary partially commutative group, see
also~\cite{day_peak_2009}. These techniques allow the author to solve the Whitehead problem
for partially commutative groups, in its variant relative to automorphisms and tuples of
conjugacy classes. In particular $\WhP_\mathbf{a}(G)$ (which was conjectured
in~\cite{day_peak_2009}) is solved in~\cite{day_full-featured_2012} for any partially commutative group $G$. As
far as we know, $\WhP_\mathbf{m}(G)$ and $\WhP_\mathbf{e}(G)$ remain unsolved in general.
Our Theorem~\ref{thm:Whitehead} below is a small contribution into this direction, solving
these problems for free-abelian times free groups.

Let us begin by reminding the situation of the Whitehead problems for free-abelian and for
free groups (the first one is folklore, and the second one is well-known). The following
lemma is straightforward to prove and, in particular, solves $\WhP_\mathbf{a}(\ZZ^m)$,
$\WhP_\mathbf{m}(\ZZ^m)$ and $\WhP_\mathbf{e}(\ZZ^m)$. Here, for a vector
$\mathbf{a}=(a_1,\ldots,a_m) \in \ZZ^m$, we write $\mcd(\mathbf{a})$ to denote the greatest
common divisor of the $a_i$'s (with the convention that $\mcd(\mathbf{0})=0$).

\begin{lem}\label{lem:caract uP i aQ}
If $\mathbf{u}\in \ZZ^n$ and $\mathbf{a}\in \ZZ^m \setminus \{\mathbf{0}\}$, then
\begin{enumerate}
\item[\emph{(i)}] $\{ \mathbf{aQ} \mid \mathbf{Q} \in \GL_m (\ZZ)\} =\{ \mathbf{a'}\in
    \ZZ^m \mid \mcd(\mathbf{a}) =\mcd(\mathbf{a'}) \}$,
\item[\emph{(ii)}] $\{ \mathbf{aQ} \mid \mathbf{Q} \in \mathcal{M}_m (\ZZ) \text{ with }
    \det(\mathbf{Q}) \neq 0 \} =\{ \mathbf{a'}\in \ZZ^m \mid \mcd(\mathbf{a}) \divides
    \mcd(\mathbf{a'}) \} \setminus \{\mathbf{0}\}$,
\item[\emph{(iii)}] $\{ \mathbf{uP} \mid \mathbf{P}\in \mathcal{M}_{n\times m} (\ZZ)\}
    =\{ \mathbf{u'}\in \ZZ^m \mid \mcd(\mathbf{u}) \mid \mcd(\mathbf{u'}) \}$. \qed
\end{enumerate}
\end{lem}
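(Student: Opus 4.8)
The plan is to reduce all three equalities to two elementary facts about $\ZZ^m$: that $\mcd(\mathbf{a})$ equals the largest positive integer $d$ with $\mathbf{a}\in d\ZZ^m$, so in particular it is a $\GL_m(\ZZ)$-invariant; and the classical observation that every \emph{unimodular} row vector (one whose entries have $\mcd$ equal to $1$) occurs as the first row of some matrix in $\GL_m(\ZZ)$, equivalently extends to a $\ZZ$-basis of $\ZZ^m$. The latter I would prove in a couple of lines by induction on $m$ using the Euclidean algorithm (apply a $\GL_2(\ZZ)$ operation to the first two coordinates to bring their pair down to $(\mcd,0)$, and recurse), or simply quote. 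Everything else is bookkeeping.

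For the inclusion ``$\subseteq$'' in each part I would only note that every entry of $\mathbf{aQ}$ (resp. $\mathbf{uP}$) is an integer linear combination of the entries of $\mathbf{a}$ (resp. $\mathbf{u}$), hence a multiple of $\mcd(\mathbf{a})$ (resp. $\mcd(\mathbf{u})$); thus $\mcd(\mathbf{a})\divides\mcd(\mathbf{aQ})$ always, and if moreover $\mathbf{Q}\in\GL_m(\ZZ)$ then applying the same to $\mathbf{Q}^{-1}$ gives the reverse divisibility, so $\mcd(\mathbf{a})=\mcd(\mathbf{aQ})$. In parts (i) and (ii) one also remarks that $\mathbf{a}\neq\mathbf{0}$ together with $\det\mathbf{Q}\neq 0$ forces $\mathbf{aQ}\neq\mathbf{0}$ (injectivity of $\mathbf{v}\mapsto\mathbf{vQ}$ over $\QQ^m$); this is exactly why $\{\mathbf{0}\}$ must be removed on the right of (ii) and is automatically absent on the right of (i).

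For ``$\supseteq$'' in (i): given $\mathbf{a}'$ with $d:=\mcd(\mathbf{a}')=\mcd(\mathbf{a})$, write $\mathbf{a}=d\mathbf{b}$ and $\mathbf{a}'=d\mathbf{b}'$ with $\mathbf{b},\mathbf{b}'$ unimodular, realize $\mathbf{b},\mathbf{b}'$ as first rows of matrices $\mathbf{P},\mathbf{P}'\in\GL_m(\ZZ)$, and put $\mathbf{Q}=\mathbf{P}^{-1}\mathbf{P}'\in\GL_m(\ZZ)$; then $\mathbf{aQ}=d\,\mathbf{b}\mathbf{P}^{-1}\mathbf{P}'=d\,\mathbf{e}_1\mathbf{P}'=d\mathbf{b}'=\mathbf{a}'$. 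For ``$\supseteq$'' in (ii): first use (i) to pick $\mathbf{Q}_1\in\GL_m(\ZZ)$ with $\mathbf{aQ}_1=d\,\mathbf{e}_1$ where $d=\mcd(\mathbf{a})$; next, writing $\mathbf{a}'=d\mathbf{b}$ with $\mathbf{b}\in\ZZ^m\setminus\{\mathbf{0}\}$ (legitimate since $\mcd(\mathbf{a})\divides\mcd(\mathbf{a}')$ and $\mathbf{a}'\neq\mathbf{0}$), fix an index $j$ with $b_j\neq 0$ and let $\mathbf{Q}_2$ be the matrix with first row $\mathbf{b}$ and the standard basis vectors $\mathbf{e}_k$, $k\neq j$, as its remaining rows, so $\det\mathbf{Q}_2=\pm b_j\neq 0$ and $d\,\mathbf{e}_1\mathbf{Q}_2=\mathbf{a}'$; then $\mathbf{Q}=\mathbf{Q}_1\mathbf{Q}_2$ satisfies $\det\mathbf{Q}\neq 0$ and $\mathbf{aQ}=\mathbf{a}'$.

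Finally, ``$\supseteq$'' in (iii): if $\mathbf{u}=\mathbf{0}$ the hypothesis $0=\mcd(\mathbf{u})\divides\mcd(\mathbf{u}')$ forces $\mathbf{u}'=\mathbf{0}$ and there is nothing to do; otherwise pick Bézout coefficients $\boldsymbol\lambda\in\ZZ^n$ with $\mathbf{u}\,\boldsymbol\lambda\tr=\mcd(\mathbf{u})$, write $\mathbf{u}'=\mcd(\mathbf{u})\,\boldsymbol\mu$ with $\boldsymbol\mu\in\ZZ^m$, and take the rank-one matrix $\mathbf{P}=\boldsymbol\lambda\tr\boldsymbol\mu\in\mathcal{M}_{n\times m}(\ZZ)$, for which $\mathbf{uP}=(\mathbf{u}\,\boldsymbol\lambda\tr)\boldsymbol\mu=\mcd(\mathbf{u})\,\boldsymbol\mu=\mathbf{u}'$. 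I would present the three parts in this order, (i), then (ii) using (i), then (iii). There is no genuine obstacle; the only nontrivial ingredient is the ``unimodular row extends to a $\ZZ$-basis'' fact, and even that is standard.
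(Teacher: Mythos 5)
Your proposal is correct and complete; the paper itself states this lemma without proof (declaring it ``straightforward''), and your argument is exactly the standard one that is being left implicit: divisibility of entries of $\mathbf{aQ}$ by $\mcd(\mathbf{a})$ for the forward inclusions, the ``unimodular row extends to a $\ZZ$-basis'' fact for (i), reduction to $d\,\mathbf{e_1}$ plus an explicit nonsingular matrix for (ii), and a rank-one Bézout matrix for (iii). All the edge cases ($\mathbf{a}'\neq\mathbf{0}$ in (i)–(ii), $\mathbf{u}=\mathbf{0}$ in (iii)) are handled correctly.
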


As expected, the same problems for the free group $F_n$ are much more complicated. As
mentioned above, the case of automorphisms was already solved by Whitehead back in the 30's
of last century. The case of endomorphisms can be solved by writing a system of equations
over $F_n$ (with unknowns being the images of a given free basis for $F_n$), and then
solving it by the powerful Makanin's algorithm. Finally, the case of monomorphisms was
recently solved by Ciobanu-Houcine.

\begin{thm}\label{thm:Whitehead free}
For $n\geqslant 2$,
\begin{itemize}
\item[\emph{(i)}] \emph{[\textbf{Whitehead, \cite{whitehead_equivalent_1936}}]}
    $\WhP_\mathbf{a}(F_n)$ is solvable.
\item[\emph{(ii)}] \emph{[\textbf{Ciobanu-Houcine, \cite{ciobanu_monomorphism_2010}}]}
    $\WhP_\mathbf{m}(F_n)$ is solvable.
\item[\emph{(iii)}] \emph{[\textbf{Makanin, \cite{makanin_equations_1982}}]}
    $\WhP_\mathbf{e}(F_n)$ is solvable. \qed
\end{itemize}
\end{thm}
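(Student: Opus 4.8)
The plan is to treat each of the three items as a direct application of the corresponding (classical or recent) algorithm for free groups; no genuinely new argument is required, and the only real work is to record exactly what each cited tool outputs.

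For (i), I would recall Whitehead's peak-reduction procedure. Given $u,v\in F_n$, one algorithmically shortens each of them to an element of minimal length in its $\Aut(F_n)$-orbit by repeatedly applying Whitehead automorphisms that do not increase length (a finite and effective process), keeping track of the automorphisms used; one then checks by a finite search among reduced words of the relevant length whether the two minimal representatives lie in a common orbit, and, if so, composes the recorded automorphisms together with a connecting one between the minimal forms to exhibit an explicit $\phi\in\Aut(F_n)$ with $u\phi=v$. This is precisely the algorithm of~\cite{whitehead_equivalent_1936}; see also~\cite{lyndon_combinatorial_2001}.

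For (iii), I would observe that an endomorphism of $F_n$ is nothing but a choice of images $x_1\phi,\ldots,x_n\phi\in F_n$, so the existence of $\phi$ with $u\phi=v$ is equivalent to the solvability in $F_n$ of the single equation with constants $u(Y_1,\ldots,Y_n)=v$ in the unknowns $Y_1,\ldots,Y_n$. Makanin's algorithm~\cite{makanin_equations_1982} decides solvability of such equations and, together with its later refinements, allows one to effectively produce a solution, which is the required $\phi$. For (ii), the same equation $u(Y_1,\ldots,Y_n)=v$ must be solved under the additional constraint that $(Y_1,\ldots,Y_n)$ freely generates a free subgroup of rank $n$, i.e.\ that the resulting endomorphism is injective; this is exactly the monomorphism problem for free groups settled by Ciobanu and Houcine~\cite{ciobanu_monomorphism_2010}, which I would invoke directly.

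Thus the ``main obstacle'' lies not in the packaging but in the tools: items (ii) and (iii) are not soft consequences of (i), and rest on the deep theory of equations over free groups (Makanin's algorithm and the Ciobanu--Houcine machinery), which I would use as black boxes. Only item (i) is elementary combinatorial group theory, and it is the one point where I would actually spell out the reduction rather than merely cite.
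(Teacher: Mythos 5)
Your proposal matches the paper exactly: this theorem is stated as a compilation of known results with no proof beyond citation, and the paper's only accompanying remark is precisely your reduction of the endomorphism case to an equation over $F_n$ solved by Makanin's algorithm, with (i) and (ii) cited directly to Whitehead and Ciobanu--Houcine. Your additional detail on peak reduction and on the injectivity constraint for (ii) is correct but not needed beyond what the paper records.
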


\begin{thm}\label{thm:Whitehead}
Let $m\geqslant 1$ and $n\geqslant 2$, then
\begin{itemize}
\item[\emph{(i)}] $\WhP_\mathbf{a}(\ZZ^m \times F_n)$ is solvable.
\item[\emph{(ii)}] $\WhP_\mathbf{m}(\ZZ^m \times F_n)$ is solvable.
\item[\emph{(iii)}] $\WhP_\mathbf{e}(\ZZ^m \times F_n)$ is solvable.
\end{itemize}
\end{thm}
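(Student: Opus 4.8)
The plan is to feed the structural description of $\End(G)$ and $\Aut(G)$ (Propositions~\ref{prop:classificacio endos} and~\ref{prop:caract mono}) into a decoupling of the problem into a ``free part'' handled by the known free-group Whitehead algorithms (Theorem~\ref{thm:Whitehead free}) and an ``abelian part'' handled by the arithmetic Lemma~\ref{lem:caract uP i aQ}. Write the data in normal form, $g=\mathbf{t^a}u$ and $g'=\mathbf{t^{a'}}u'$, and let $\mathbf{u}=u\rho\in\ZZ^n$ be the abelianization of $u$.

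For $\WhP_\mathbf{a}$ and $\WhP_\mathbf{m}$, Proposition~\ref{prop:caract mono} says any automorphism (resp.\ monomorphism) of $G$ is necessarily of \ti, $\Psi=\Psi_{\phi,\mathbf{Q},\mathbf{P}}$ with $\phi\in\Aut(F_n)$ and $\mathbf{Q}\in\GL_m(\ZZ)$ (resp.\ $\phi$ a monomorphism of $F_n$ and $\det\mathbf{Q}\neq 0$), while $\mathbf{P}$ is unconstrained. Since $(\mathbf{t^a}u)\Psi_{\phi,\mathbf{Q},\mathbf{P}}=\mathbf{t}^{\mathbf{aQ}+\mathbf{uP}}\,u\phi$, uniqueness of normal form gives that $g\Psi=g'$ iff the two conditions
\begin{equation*}
u\phi=u' \qquad\text{and}\qquad \mathbf{aQ}+\mathbf{uP}=\mathbf{a'}
\end{equation*}
hold. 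The first is exactly $\WhP_\mathbf{a}(F_n)$ (resp.\ $\WhP_\mathbf{m}(F_n)$), decidable by Theorem~\ref{thm:Whitehead free}; the second asks whether $\mathbf{a'}$ lies in $\{\mathbf{aQ}\}+\{\mathbf{uP}\}$, which by Lemma~\ref{lem:caract uP i aQ} is --- up to routine bookkeeping in the degenerate cases $\mathbf{a}=\mathbf{0}$ or $\mathbf{u}=\mathbf{0}$ --- a divisibility condition on $\mathbf{a'}$ in terms of $\mcd(\mathbf{a})$ and $\mcd(\mathbf{u})$, hence checkable by integer arithmetic. Crucially, $\phi$ and $(\mathbf{Q},\mathbf{P})$ range over disjoint parameter sets, so the two tests are independent, and in the affirmative case explicit witnesses ($\phi$ from the free-group algorithm, $\mathbf{Q},\mathbf{P}$ from a Smith-normal-form / B\'ezout computation) assemble into the desired $\Psi$.

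For $\WhP_\mathbf{e}$ the same reasoning disposes of \ti\ endomorphisms, now invoking $\WhP_\mathbf{e}(F_n)$ (Makanin) for the free part and Lemma~\ref{lem:caract uP i aQ} with $\mathbf{Q}$ unconstrained for the abelian part. But one must also decide the existence of a suitable \tii\ endomorphism $\Psi_{z,\mathbf{l},\mathbf{h},\mathbf{Q},\mathbf{P}}\colon \mathbf{t^a}u\mapsto \mathbf{t}^{\mathbf{aQ}+\mathbf{uP}}z^{\mathbf{a}\mathbf{l}\tr+\mathbf{u}\mathbf{h}\tr}$. Here $g\Psi=g'$ forces $u'$ to be a power of the non-proper-power element $z$, so one first tests whether $u'=1$ or $u'$ is a (proper or improper) power, computing in the latter case its non-proper-power root $z_0$ and exponent $k_0$ (effective in $F_n$); then $z\in\{z_0,z_0^{-1}\}$, and, decoupling once more, $g\Psi=g'$ reduces to the solvability in $(\mathbf{l},\mathbf{h})\in(\ZZ^m\setminus\{\mathbf{0}\})\times\ZZ^n$ of the linear equation $\mathbf{a}\mathbf{l}\tr+\mathbf{u}\mathbf{h}\tr=\pm k_0$ (with right-hand side $0$ when $u'=1$), together with the same abelian condition $\mathbf{aQ}+\mathbf{uP}=\mathbf{a'}$. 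That linear Diophantine question, subject to the nondegeneracy requirement $\mathbf{l}\neq\mathbf{0}$, is decidable, the cases where it fails splitting according to whether $m=1$, $\mathbf{a}=\mathbf{0}$, or $\mathbf{u}=\mathbf{0}$. The answer to $\WhP_\mathbf{e}(G)$ is ``yes'' precisely when the \ti\ test or the \tii\ test succeeds.

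I expect the only genuinely delicate point to be the \tii\ case of $\WhP_\mathbf{e}$: one must correctly extract the root of $u'$ in $F_n$ and, more fiddly, track the constraint $\mathbf{l}\neq\mathbf{0}$ --- the very feature that makes \tii\ endomorphisms noninvertible and the only place where low-rank degenerate configurations can obstruct a solution. Everything else is a clean reduction: the free component to Theorem~\ref{thm:Whitehead free}, and the free-abelian component to Lemma~\ref{lem:caract uP i aQ} and elementary linear algebra over $\ZZ$.
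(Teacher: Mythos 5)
Your overall architecture is exactly the paper's: classify the morphisms via Propositions~\ref{prop:classificacio endos} and~\ref{prop:caract mono}, decouple the free equation $u\phi=u'$ (handled by Theorem~\ref{thm:Whitehead free}) from the abelian equation $\mathbf{aQ}+\mathbf{uP}=\mathbf{a'}$ (handled by Lemma~\ref{lem:caract uP i aQ} and linear Diophantine analysis), and treat \tii\ endomorphisms separately in case (iii). Cases (ii) and (iii) are essentially the paper's argument and are fine as sketched; your \tii\ analysis is sound, although the paper disposes of your ``fiddly'' constraint $\mathbf{l}\neq\mathbf{0}$ more cleanly by observing that a \tii\ map with $\mathbf{l}=\mathbf{0}$ is also of \ti, so that degenerate branch is already covered by the \ti\ test, and the Diophantine condition collapses to ``$v$ is a $d$-th power, where $d=\mcd(\mathbf{a},\mathbf{u})$''.

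The genuine gap is in case (i), precisely where you assert everything is routine. For automorphisms, Lemma~\ref{lem:caract uP i aQ}(i) says $\{\mathbf{aQ}:\mathbf{Q}\in\GL_m(\ZZ)\}$ consists of the vectors whose gcd \emph{equals} $\alpha=\mcd(\mathbf{a})$, not merely is divisible by it. Writing $\mathbf{aQ}=\alpha\mathbf{x}$ and $\mathbf{uP}=\mu\mathbf{y}$, the abelian equation becomes the system $\alpha x_j+\mu y_j=a'_j$ subject to the extra constraint $\mcd(x_1,\ldots,x_m)=1$, and its solvability is \emph{not} ``a divisibility condition on $\mathbf{a'}$ in terms of $\mcd(\mathbf{a})$ and $\mcd(\mathbf{u})$''. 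Concretely, in $\ZZ\times F_2$ take $\mathbf{a}=(1)$ and $u=a^2$, so $\alpha=1$ and $\mu=2$: the target first coordinate $3$ is reachable ($\pm1+2p=3$ has a solution) but $4$ is not ($\pm1+2p=4$ has none), even though $\mcd(\alpha,\mu)=1$ divides both. Deciding whether the affine solution lattice contains a point with coprime $x$-coordinates requires an actual argument: the paper proves that $\mcd(x_1^0+\lambda_1\mu',\ldots,x_m^0+\lambda_m\mu')=1$ is achievable for some $\lambda_1,\ldots,\lambda_m\in\ZZ$ if and only if $\mcd(x_1^0,\ldots,x_m^0,\mu')=1$, via a prime-by-prime choice of $\lambda_m$. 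That coprimality lemma is the one nontrivial idea in the whole proof; your sketch omits it and, moreover, locates the delicacy in the wrong place.
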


\begin{proof}
We are given two elements $\mathbf{t^a}u,\, \mathbf{t^b}v \in G=\ZZ^m \times F_n$, and have
to decide whether there exists an automorphism (resp. monomorphism, endomorphism) of $G$ sending one to the other. And in the affirmative case, find one of them. For convenience, we shall prove (ii), (i) and (iii) in this order.

(ii). Since all monomorphisms of $G$ are of \ti, we have to decide whether there exist a monomorphism $\phi$ of $F_n$, and matrices $\mathbf{Q} \in \mathcal{M}_m(\ZZ)$ and
$\mathbf{P}\in \mathcal{M}_{n\times m}(\ZZ)$, with $\det \mathbf{Q} \neq 0$, such that
$(\mathbf{t^a}u)\Psi_{\phi, \mathbf{Q}, \mathbf{P}} =\mathbf{t^b}v$. Separating the free
and free-abelian parts, we get two independent problems:
\begin{equation} \label{eq:Whitehead typeI}
\left.
\begin{aligned}
u\phi =v \\ \mathbf{aQ}+\mathbf{uP}=\mathbf{b}
\end{aligned}
\ \right\}
\end{equation}
On one hand, we can use Theorem~\ref{thm:Whitehead free}~(ii) to decide whether there
exists a monomorphism $\phi$ of $F_n$ such that $u\phi =v$. If not then our problem has no
solution either, and we are done; otherwise, $\WhP_\mathbf{m}(F_n)$ gives us such a $\phi$.

On the other hand, we need to know whether there exist matrices $\mathbf{Q} \in
\mathcal{M}_m(\ZZ)$ and ${\mathbf{P}\in \mathcal{M}_{n\times m}(\ZZ)}$, with $\det
\mathbf{Q} \neq 0$ and such that $\mathbf{aQ}+\mathbf{uP}=\mathbf{b}$, where $\mathbf{u}\in
\ZZ^n$ is the abelianization of $u\in F_n$ (given from the beginning). If
$\mathbf{a}=\mathbf{0}$ or $\mathbf{u}=\mathbf{0}$, this is already solved in
Lemma~\ref{lem:caract uP i aQ}(iii) or (ii). Otherwise, write $0\neq \alpha
=\mcd(\mathbf{a})$ and $0\neq \mu =\mcd(\mathbf{u})$; and, according to
Lemma~\ref{lem:caract uP i aQ}, we have to decide whether there exist $\mathbf{a'} \in
\ZZ^m$ and $\mathbf{u'}\in \ZZ^m$, with $\mathbf{a'}\neq \mathbf{0}$, $\alpha \divides
\mcd(\mathbf{a'})$, and $\mu \divides\mcd(\mathbf{u'})$, such that $\mathbf{a'}+\mathbf{u'}
=\mathbf{b}$. Writing $\mathbf{a'}=\alpha \, \mathbf{x}$ and $\mathbf{u'}=\mu \,
\mathbf{y}$, the problem reduces to test whether the following linear system of equations
\begin{equation}\label{eq:sist dem witehead paraules simple}
\left.
\begin{array}{lclcc}
\alpha \, x_1 & + & \mu \, y_1 & = & b_1 \\
 & \vdots &  & \vdots & \\ \alpha \, x_m & + &\mu \, y_m & = & b_m \\
 \end{array}
 \right\}
\end{equation}
has any integral solution $x_1, \ldots,x_m,y_1,\ldots,y_m \in \ZZ$ such that
$(x_1,\ldots,x_m)\neq \mathbf{0}$. A necessary and sufficient condition for the
system~\eqref{eq:sist dem witehead paraules simple} to have a solution is $\mcd(\alpha,
\mu) \divides b_j$, for every $j\in [m]$. And note that, if $(x_1, y_1)$ is a solution to
the first equation, then $(x_1+\mu, y_1-\alpha )$ is another one; since $\mu\neq 0$, the
condition $(x_1,\ldots,x_m)\neq \mathbf{0}$ is then superfluous. Therefore, the answer is
affirmative if and only if $\mcd(\alpha, \mu) \divides b_j$, for every $j\in [m]$; and, in
this case, we can easily reconstruct a monomorphism $\Psi$ of $G$ such that
$(\mathbf{t^a}u)\Psi =\mathbf{t^b}v$.

(i). The argument for automorphisms is completely parallel to the previous discussion
replacing the conditions $\phi$ monomorphism and $\det \mathbf{Q}\neq 0$, to $\phi$
automorphism and $\det \mathbf{Q} =\pm 1$. We manage the first change by using
Theorem~\ref{thm:Whitehead free}~(i) instead of (ii). The second change forces us to look
for solutions to the linear system~\eqref{eq:sist dem witehead paraules simple} with the
extra requirement $\mcd(\mathbf{x})=1$ (because now $\gcd(\mathbf{a'})$ should be equal and
not just multiple of $\alpha$).

So, if any of the conditions $\mcd(\alpha, \mu) \divides b_j$ fails, the answer is negative
and we are done. Otherwise, write $\rho =\mcd(\alpha,\mu)$, $\alpha =\rho \alpha'$ and $\mu
=\rho \mu'$, and the general solution for the $j$-th equation in~\eqref{eq:sist dem
witehead paraules simple} is
 $$
(x_j,y_j) = (x_j^0,y_j^0) + \lambda_j (\mu',-\alpha'), \quad \lambda_j\in \ZZ,
 $$
where $(x_j^0,y_j^0)$ is a particular solution, which can be easily computed. Thus, it only
remains to decide whether there exist $\lambda_1, \ldots, \lambda_m \in \ZZ$ such that
\begin{equation}\label{eq:mcd(sol gen)=1}
\mcd(x_1^0+\lambda_1 \mu' ,\,\ldots ,\, x_m^0 +\lambda_m \mu') =1.
\end{equation}
We claim that this happens if and only if
\begin{equation}\label{eq:mcd(x10,...,xm0,mu)=1}
\mcd(x_1^0,\, \ldots ,\, x_m^0 ,\, \mu')=1,
\end{equation}
which is clearly a decidable condition.

Reorganizing a Bezout identity for~\eqref{eq:mcd(sol gen)=1} we can obtain a Bezout
identity for~\eqref{eq:mcd(x10,...,xm0,mu)=1}. Hence \eqref{eq:mcd(sol gen)=1} implies
\eqref{eq:mcd(x10,...,xm0,mu)=1}. For the converse, assume the integers
$x_1^0,\ldots,x_m^0, \mu'$ are coprime, and we can fulfill equation~\eqref{eq:mcd(sol
gen)=1} by taking $\lambda_1= \cdots = \lambda_{m-1} =0$ and $\lambda_m$ equal to the
product of the primes dividing $x_1^0, \ldots, x_{m-1}^0$ but not $x_m^0$ (take
$\lambda_m=1$ if there is no such prime). Indeed, let us see that any prime $p$ dividing
$x_1^0, \ldots, x_{m-1}^0$ is not a divisor of ${x_m^0+\lambda_m \mu'}$. If $p$ divides
$x_m^0$, then $p$ does not divide neither $\mu'$ nor $\lambda_m$ and therefore $x_m^0
+\lambda_m \mu'$ either. If $p$ does not divide $x_m^0$, then $p$ divides $\lambda_m$ by
construction, hence $p$ does not divide $x_m^0 +\lambda_m \mu'$. This completes the proof
of the claim, and of the theorem for automorphisms.

(iii). In our discussion now, we should take into account endomorphisms of both types.

Again, the argument to decide whether there exists an endomorphism of \ti\ sending
$\mathbf{t^a}u$ to $\mathbf{t^b}v$, is completely parallel to the above proof of (ii),
replacing the condition $\phi$ monomorphism to $\phi$ endomorphism, and deleting the
condition $\det \mathbf{Q}\neq 0$ (and allowing here an arbitrary matrix $\mathbf{Q}$). We
manage the first change by using Theorem~\ref{thm:Whitehead free}~(iii) instead of (ii).
The second change simply leads us to solve the system~\eqref{eq:sist dem witehead paraules
simple} with no extra condition on the variables; so, the answer is affirmative if and only
if $\mcd(\alpha, \mu) \divides b_j$, for every $j\in [m]$.

It remains to consider endomorphisms of \tii, $\Psi_{z,\mathbf{l,h,Q,P}}$. So, given
our elements $\mathbf{t^a}u$ and $\mathbf{t^b}v$, and separating the free and free-abelian
parts, we have to decide whether there exist $z\in F_n$, $\mathbf{l}\in \ZZ^m$,
$\mathbf{h}\in \ZZ^n$, $\mathbf{Q} \in \mathcal{M}_{m}(\ZZ)$, and $\mathbf{P} \in
\mathcal{M}_{n \times m}(\ZZ)$ such that
\begin{equation} \label{eq:Whitehead typeII}
\left. \begin{aligned}
z^{\mathbf{a}\mathbf{l}\tr +\mathbf{u}\mathbf{h}\tr} =v \\
\mathbf{aQ}+\mathbf{uP}=\mathbf{b}
\end{aligned} \ \right\}
\end{equation}
(note that we can ignore the condition $\mathbf{l}\neq \mathbf{0}$ because if
$\mathbf{l}=\mathbf{0}$ then the endomorphism becomes of \ti\ as well, and this case is
already considered before). Again the two equations are independent. About the free part,
note that the integers $\mathbf{a}\mathbf{l}\tr +\mathbf{u}\mathbf{h}\tr$ with $\mathbf{l}
\in \ZZ^m$ and $\mathbf{h} \in \ZZ^n$ are precisely the multiples of
$d=\gcd(\mathbf{a},\mathbf{u})$; so, it has a solution if and only if $v$ is a
$d^{\text{th}}$ power in $F_n$, a very easy condition to check. And about the second
equation, it is exactly the same as when considering endomorphisms of \ti, so its
solvability is already discussed.
\end{proof}

\section*{Acknowledgments}
Both authors thank the hospitality of the Centre de Recerca Matemàtica (CRM-Barcelona) along the research programme on Automorphisms of Free Groups, during which this preprint was finished. We also gratefully acknowledge partial support from the MEC (Spain) through project number MTM2011-25955.
The first named author thanks the support of \emph{Universitat Polit\`{e}cnica de Catalunya} through the PhD grant number 81--727.


\bibliography{mybib}{}
\bibliographystyle{acm}
\addcontentsline{toc}{section}{References}


\section*{} \label{authors}

\begin{minipage}[t]{0.46\textwidth}
\textbf{Jordi Delgado Rodr\'iguez}\hyperref[top]{$^{*}$}

\emph{Dept. Mat. Apl. III,}

\emph{Universitat Polit\`ecnica de Catalunya,}

\emph{Manresa, Barcelona.}

\emph{email}: \texttt{jorge.delgado@upc.edu}
\end{minipage}
\hfill
\begin{minipage}[t]{0.46\textwidth}
\textbf{Enric Ventura Capell}\hyperref[top]{$^{\dag}$}

\emph{Dept. Mat. Apl. III,}

\emph{Universitat Polit\`ecnica de Catalunya,}

\emph{Manresa, Barcelona.}

\emph{email}: \texttt{enric.ventura@upc.edu}
\end{minipage}

\end{document}